\newtheorem{thm}{Theorem}[section]
\newtheorem{cor}[thm]{Corollary}
\newtheorem{lem}[thm]{Lemma}
\newtheorem{prop}[thm]{Proposition}
\newtheorem{example}[thm]{Example}
\newtheorem{remark}[thm]{Remark}
\newtheorem{defn}[thm]{Definition}
\begin{document}
	
	\title{Strong jump inversion}
	
	\author{W.\ Calvert, A.\ Frolov, V.\ Harizanov, J.\ Knight,\\ 
		C.\ McCoy, A.\ Soskova, and S.\ Vatev}
	
	\address{Department of Mathematics\\
		Southern Illinois University\\
		USA}
	\email{wcalvert@siu.edu}
	
	\address{Department of Mathematics\\
		Kazan Federal University\\
		Russia}
	\email{Andrey.Frolov@ksu.ru}
	
	\address{Department of Mathematics\\
		George Washington University\\
		USA}
	\email{harizanv@gwu.edu}
	
	\address{Department of Mathematics\\
		University of Notre Dame\\
		USA}
	\email{knight.1@nd.edu}
	
	\address{Department of Mathematics\\
		University of Portland\\
		USA}
	\email{mccoy@up.edu}
	
	\address{Department of Mathematical Logic\\
		Sofia University\\
		Bulgaria}
	\email{asoskova@fmi.uni-sofia.bg}
	
	\address{Department of Mathematical Logic\\
		Sofia University\\
		Bulgaria}
	\email{stefanv@fmi.uni-sofia.bg}
	
	\thanks{The authors are grateful for support from NSF Grant DMS \#1101123, which allowed the first, third, fourth, and fifth authors to visit Sofia, allowed the sixth author to visit Notre Dame, and allowed the seventh author to visit University of Portland.  The second author is grateful for support from RFBR-16-31-60077 and from Kazan Federal University, which allowed him to visit Notre Dame.  The third author is grateful for the support from the Simons Foundation Collaboration Grant and from the CCFF and Dean's Research Chair Award of the George Washington University.  Finally, the sixth and seventh authors are grateful for support from BNSF, MON DN 02/16 and NSF Grant DMS \#1600625.  Finally, the authors are grateful to the referees, for helpful suggestions, and especially for making us aware of work of Marker and Miller on differentially closed fields.}
	
	\maketitle
	
	\begin{abstract}
		We say that a structure $\mathcal{A}$ admits \emph{strong jump inversion} provided that for every oracle $X$, if $X'$ computes $D(\mathcal{C})'$ for some $\mathcal{C}\cong\mathcal{A}$, then $X$ computes $D(\mathcal{B})$ for some $\mathcal{B}\cong\mathcal{A}$.  Jockusch and Soare \cite{JS} showed that there are low linear orderings without computable copies, but Downey and Jockusch \cite{DJ} showed that every Boolean algebra admits strong jump inversion.  More recently, D.\ Marker and R.\ Miller \cite{MM} have shown that all countable models of $DCF_0$ (the theory of differentially closed fields of characteristic $0$) admit strong jump inversion.  We establish a general result with sufficient conditions for a structure $\mathcal{A}$ to admit strong jump inversion.  Our conditions involve an enumeration of $B_1$-types, where these are made up of formulas that are Boolean combinations of existential formulas.  Our general result applies to some familiar kinds of structures, including some classes of linear orderings and trees.  We do not get the result of Downey and Jockusch for arbitrary Boolean algebras, but we do get a result for Boolean algebras with no $1$-atom, with some extra information on the complexity of the isomorphism.  Our general result gives the result of Marker and Miller.  In order to apply our general result, we produce a computable enumeration of the types realized in models of $DCF_0$.  This also yields the fact that the saturated model of $DCF_0$ has a decidable copy.    
	\end{abstract}
	
	\section{Introduction}
	
	We often identify a structure $\mathcal{A}$ with its atomic diagram $D(\mathcal{A})$. Then $D(\mathcal{A})'$ is the jump of the structure.  This is the only notion of jump that we shall actually use.  We are interested in the following notion of jump inversion.  
	
	\begin{defn}
		\label{maindefn}
		
		A structure $\mathcal{A}$ \emph{admits strong jump inversion} provided that for all sets $X$, if $X'$ computes $D(\mathcal{C})'$ for some $\mathcal{C}\cong\mathcal{A}$, then $X$ computes $D(\mathcal{B})$ for some 
		$\mathcal{B}\cong\mathcal{A}$.  
		
	\end{defn}
	
	\begin{remark}
		
		The structure $\mathcal{A}$ admits strong jump inversion iff for all $X$, if $\mathcal{A}$ has a copy that is low over $X$, then it has a copy that is computable in $X$.  Here when we say that $\mathcal{C}$ is low over $X$, we mean that $D(\mathcal{C})' \leq_T X'$.
		
	\end{remark}
	
	The definition of strong jump inversion was motivated by the following result of Downey and Jockusch \cite{DJ}. 
	
	\begin{thm} [Downey-Jockusch]
		\label{Downey-Jockusch}
		
		All Boolean algebras admit strong jump inversion.  
		
	\end{thm}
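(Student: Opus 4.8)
The plan is to prove that every Boolean algebra $\mathcal{A}$ admits strong jump inversion by starting from a copy $\mathcal{C}$ that is low over $X$ (so $D(\mathcal{C})' \leq_T X'$) and building a computable-in-$X$ copy $\mathcal{B}$. The central idea is that in a Boolean algebra, the relevant structural information needed to build a copy is carried by the atoms and, more generally, by certain $\Sigma^0_1$ and $\Pi^0_1$ facts about elements, and that these facts become $X$-decidable once we have access to $X'$. Concretely, I would first isolate which relations on $\mathcal{C}$ control the isomorphism type: for Boolean algebras the key predicates are membership in the ideal of finite joins of atoms, being an atom, being atomless below a given element, and the ``$n$-atom'' hierarchy that Downey and Jockusch use to classify elements. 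These predicates are all computable from $D(\mathcal{C})'$ hence from $X'$.

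Next I would set up a back-and-forth or a direct enumeration strategy. The natural approach is to build $\mathcal{B}$ as an increasing union of finite Boolean subalgebras, deciding at each stage, using the oracle $X$, how to split the next element so that the eventual limit is isomorphic to $\mathcal{A}$. The subtlety is that the facts we need to consult about $\mathcal{C}$ (is this element an atom, does it have infinitely many atoms below it, is some piece atomless) are genuinely $X'$-questions, not $X$-questions, so we cannot simply query them during an $X$-computable construction. The standard device here is a \emph{true-stage} or \emph{limit} argument: since $D(\mathcal{C})'\leq_T X'$, each such $\Sigma^0_2$ or $\Pi^0_2$ fact about $\mathcal{C}$ has an $X$-computable approximation with finitely many mind-changes, and we can drive the construction off these approximations, correcting earlier guesses as better information arrives while preserving the part of $\mathcal{B}$ already committed.

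I expect the main obstacle to be exactly this reconciliation between the $X'$-level information we need about $\mathcal{C}$ and the purely $X$-computable construction of $\mathcal{B}$. The danger is that a mind-change in the approximation to, say, whether an element is atomless could force us to revise a splitting we have already placed into $\mathcal{B}$, destroying the isomorphism. To control this I would exploit the finitary character of the classification: each element of a Boolean algebra is assigned a finite amount of invariant data (its atom-count or atomlessness type up to the relevant level), and the approximation to this data from $X$ stabilizes. The construction must be arranged so that revisions only ever \emph{refine} the finite subalgebra built so far — we never un-split, only split further — which guarantees that the committed portion of $\mathcal{B}$ is never invalidated and that in the limit every element of $\mathcal{B}$ has been correctly assigned its true type, yielding $\mathcal{B}\cong\mathcal{C}\cong\mathcal{A}$ with $D(\mathcal{B})\leq_T X$.
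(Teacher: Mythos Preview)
Your outline has the right overall shape---approximate the atom predicate using $X$, build $\mathcal{B}$ by successive splittings, never un-split---but it stops short of the idea that actually makes the argument go through, and in one place it asserts something false. The claim that ``revisions only ever refine the finite subalgebra built so far \ldots\ which guarantees that \ldots\ $\mathcal{B}\cong\mathcal{C}$'' is exactly the gap. If at some stage you split an element $b$ of $\mathcal{B}$ because your current approximation says the corresponding element $c\in\mathcal{C}$ is not an atom, and later the approximation settles and reveals that $c$ \emph{is} an atom, then the natural map now sends a finite join of $\mathcal{B}$-atoms to a single $\mathcal{C}$-atom. Refining further cannot repair this; the map is no longer an isomorphism, and $\mathcal{B}$ need not be isomorphic to $\mathcal{C}$ on the nose. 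The Downey--Jockusch argument confronts this head-on: one accepts that the limit map $f$ is only a near-isomorphism (finite joins of atoms may go to single atoms), and then invokes Vaught's re-apportioning lemma to conclude $\mathcal{B}\cong\mathcal{A}$ anyway. That step requires $\mathcal{A}$ to have infinitely many atoms, which is why the proof is non-uniform: one first disposes of the finitely-many-atoms case separately (such algebras trivially have computable copies). Your write-up mentions neither the non-uniform case split nor the Vaught correction, and without them the conclusion does not follow.

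A secondary point: you overreach on what is available from $D(\mathcal{C})'$. The atom predicate is $\Pi^0_1$ in $D(\mathcal{C})$ and hence $\Delta^0_2$-in-$X$, so it has an $X$-computable limit approximation; this is all Downey--Jockusch need. But ``atomless below $a$'' is $\Pi^0_2$ in $D(\mathcal{C})$ and ``finite join of atoms'' is $\Sigma^0_2$, so neither is computable from $D(\mathcal{C})'$ in general, and neither has an $X$-computable approximation with finitely many mind-changes. The ``$n$-atom hierarchy'' you allude to belongs to the later low$_n$ extensions (Thurber, Knight--Stob), not to the base low$_1$ result. For the theorem as stated, restrict yourself to the atom predicate, accept that over-splitting will occur, and supply the Vaught argument to finish.
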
 
	
	\begin{proof} [Sketch of proof]
		
		Let $\mathcal{A}$ be a Boolean algebra that is low over $X$.  Then $X'$ computes the set of atoms in 
		$\mathcal{A}$.  Downey and Jockusch showed that if $X'$ computes $(\mathcal{A},atom(x))$, then $X$ computes a copy of $\mathcal{A}$.  The proof involves some non-uniformity.  A Boolean algebra with only finitely many atoms obviously has a computable copy.  Suppose $\mathcal{A}$ has infinitely many atoms.  If 
		$\mathcal{A}$ is low over $X$, then there is an $X$-computable Boolean algebra $\mathcal{B}$ with a function $f$, $\Delta^0_2$ relative to $X$, which would be an isomorphism from $\mathcal{B}$ to $\mathcal{A}$ except that it may map a finite join of atoms in $\mathcal{B}$ to a single atom in $\mathcal{A}$.  We convert $f$ into an isomorphism by re-apportioning the atoms (see Vaught~\cite{Vaught}). 
	\end{proof} 
	
	Here are some further examples of structures that admit strong jump inversion.
	
	\begin{example} [Equivalence structures]
		
		Each equivalence structure is characterized up to isomorphism by the number of equivalence classes of various sizes.  We consider equivalence structures with infinitely many infinite classes.  It is well-known, and easy to prove, that such an equivalence structure has an $X$-computable copy iff the set of pairs $(n,k)$ such that there are at least $k$ classes of size $n$ is $\Sigma^0_2$ relative to $X$.  (See \cite{AK} for a complete characterization of the equivalence structures with computable copies.)  
		
		\begin{prop}
			\label{Equivalence}
			
			Let $\mathcal{A}$ be an equivalence structure with infinitely many infinite classes.  Then $\mathcal{A}$ admits strong jump inversion.
			
		\end{prop}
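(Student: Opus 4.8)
The plan is to reduce everything to the characterization stated just above: since an equivalence structure with infinitely many infinite classes has an $X$-computable copy iff the set
\[
S = \{(n,k) : \mathcal{A} \text{ has at least } k \text{ classes of size } n\}
\]
is $\Sigma^0_2$ relative to $X$, it suffices to prove that $S \in \Sigma^0_2(X)$. By the Remark, the hypothesis of strong jump inversion says exactly that $\mathcal{A}$ has a copy $\mathcal{C}$ that is low over $X$, i.e. $D(\mathcal{C})' \leq_T X'$. So I would fix such a $\mathcal{C}$, bound the complexity of $S$ relative to $\mathcal{C}$, and then push that bound through the jump to $X$.

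First I would bound the complexity of $S$ relative to $D(\mathcal{C})$. Write $x \sim y$ for the (uniformly $D(\mathcal{C})$-computable) equivalence relation of $\mathcal{C}$. For a fixed finite $n$, the statement $|[x]| \geq n$ is $\Sigma^0_1(D(\mathcal{C}))$ (there exist $n$ distinct elements equivalent to $x$), while $|[x]| \leq n$ is its complement and hence $\Pi^0_1(D(\mathcal{C}))$. Thus $|[x]| = n$ is a conjunction of a $\Sigma^0_1$ and a $\Pi^0_1$ condition. The statement $(n,k) \in S$ then reads: there exist pairwise inequivalent $x_1,\dots,x_k$ with $|[x_i]| = n$ for each $i$. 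Pulling the witnessing existential quantifiers to the front, this has the form $\exists (\text{finite tuple})\,[\Sigma^0_1 \wedge \Pi^0_1]$, which is $\Sigma^0_2(D(\mathcal{C}))$, and the bound is uniform in $(n,k)$. Note that only finite values of $n$ matter here: since $\mathcal{A}$ already has infinitely many infinite classes, its isomorphism type is pinned down entirely by the counts of finite-size classes recorded in $S$.

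Finally I would transfer the bound from $D(\mathcal{C})$ to $X$. A set that is $\Sigma^0_2(D(\mathcal{C}))$ is c.e.\ in $D(\mathcal{C})'$; since $D(\mathcal{C})' \leq_T X'$, any set c.e.\ in $D(\mathcal{C})'$ is c.e.\ in $X'$, that is, $\Sigma^0_2(X)$. Hence $S \in \Sigma^0_2(X)$, and the characterization yields an $X$-computable copy $\mathcal{B} \cong \mathcal{A}$, as required.

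I do not expect a serious obstacle. The only points demanding care are the quantifier count for $S$ — in particular, recognizing $|[x]| = n$ as a difference of c.e.\ conditions, so that the existential projection stays at the $\Sigma^0_2$ level rather than climbing higher — and the transfer across the jump, which relies on $D(\mathcal{C})' \leq_T X'$ together with the fact that being c.e.\ in a given oracle is preserved under passing to any oracle that computes it.
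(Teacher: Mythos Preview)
Your proposal is correct and follows essentially the same approach as the paper: show that the set of pairs $(n,k)$ is $\Sigma^0_2$ relative to the low copy, transfer this to $\Sigma^0_2(X)$ via the lowness assumption, and invoke the stated characterization. The paper's proof is a terse two-line version of exactly this argument; you have simply spelled out the quantifier count and the jump transfer that the paper leaves implicit.
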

		
		\begin{proof}
			
			If $\mathcal{A}$ is low over $X$, then the set $Q$ consisting of pairs $(n,k)$ such that there are at least $k$ classes of size $n$ is $\Sigma^0_2$ relative to $\mathcal{A}$, so it is $\Sigma^0_2$ relative to $X$.  Then $\mathcal{A}$ has an $X$-computable copy.
		\end{proof}
		
	\end{example}
	
	\begin{example} [Abelian $p$-groups of length $\omega$]
		
	  By Ulm's Theorem, a countable Abelian $p$-group is characterized up to isomorphism by the Ulm sequence and the dimension of the divisible part.  For an account of this, see \cite{Kaplansky}.  An Abelian $p$-group of length $\omega$ can be expressed as a direct sum of copies of $Z_{p^{n+1}}$, for finite $n$, and the Pr\"{u}fer group $Z_{p^\infty}$.  Then the Ulm sequence is $(u_n(G))_{n\in\omega}$, where $u_n(G)$ is the number of direct summands of form $Z_{p^{n+1}}$.  The dimension of the divisible part is the number of direct summands of form $Z_{p^\infty}$.
          It is well-known, and easy to prove, that if $G$ is an Abelian $p$-group of length $\omega$ with a divisible part of infinite dimension, then $G$ has an $X$-computable copy iff the set $\{(n,k):u_n(G)\geq k\}$ is $\Sigma^0_2$ relative to $X$.              
		
		\begin{prop}
			\label{Groups}
			
			Let $G$ be an Abelian $p$-group of length $\omega$ such that the divisible part has infinite dimension.  Then $G$ admits strong jump inversion.
			
		\end{prop}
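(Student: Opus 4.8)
The plan is to reduce the statement, exactly as in Proposition~\ref{Equivalence}, to a quantifier computation for the Ulm invariants. By the characterization quoted just above, $G$ has an $X$-computable copy as soon as the set $S = \{(n,k) : u_n(G) \geq k\}$ is $\Sigma^0_2$ relative to $X$; and the hypothesis that the divisible part has infinite dimension is precisely what licenses us to invoke that characterization. So it suffices to show that if $G$ is low over $X$, witnessed by a copy $\mathcal{C} \cong G$ with $D(\mathcal{C})' \leq_T X'$, then $S$ is $\Sigma^0_2$ relative to $X$. Following the pattern of the equivalence-structure argument, I would first bound the complexity of $S$ relative to the diagram of an arbitrary copy, and then transfer this bound to $X$ using lowness.

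First I would carry out the quantifier count showing that $S$ is $\Sigma^0_2$ relative to $D(\mathcal{C})$, uniformly in $(n,k)$. Recall that $u_n(G)$ is the dimension over $\mathbb{F}_p$ of the quotient of socles $(p^n G)[p]/(p^{n+1} G)[p]$. Membership $a \in (p^n G)[p]$ is expressed by the condition $pa = 0$ (decidable) together with $\exists b\,(p^n b = a)$, so it is $\Sigma^0_1$ relative to $D(\mathcal{C})$. A nontrivial $\mathbb{F}_p$-combination $\sum_i c_i a_i$ of elements of $(p^n G)[p]$ is automatically killed by $p$, so it lies in $(p^{n+1} G)[p]$ iff it lies in $p^{n+1} G$, and the latter, being $\exists b\,(p^{n+1} b = \sum_i c_i a_i)$, is $\Sigma^0_1$; hence its negation is $\Pi^0_1$. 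Therefore linear independence of $a_1, \dots, a_k$ modulo $(p^{n+1} G)[p]$, being a finite conjunction over the nonzero tuples $(c_1, \dots, c_k) \in \{0, \dots, p-1\}^k$ of such $\Pi^0_1$ conditions, is $\Pi^0_1$. Consequently $u_n(G) \geq k$ is equivalent to $\exists a_1 \cdots a_k$ of a conjunction of a $\Sigma^0_1$ and a $\Pi^0_1$ condition; pulling the existential witnesses of the $\Sigma^0_1$ part to the front leaves a single block of existential quantifiers followed by a $\Pi^0_1$ matrix, so $u_n(G) \geq k$ is $\Sigma^0_2$ relative to $D(\mathcal{C})$, uniformly.

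The remaining step transfers this bound to $X$. Being $\Sigma^0_2$ relative to $D(\mathcal{C})$ means $S$ is c.e.\ relative to $D(\mathcal{C})'$; since $\mathcal{C}$ is low over $X$ we have $D(\mathcal{C})' \leq_T X'$, and a set c.e.\ in one oracle is c.e.\ in any oracle computing it, so $S$ is c.e.\ relative to $X'$, i.e., $\Sigma^0_2$ relative to $X$. The quoted characterization then yields an $X$-computable copy of $G$, which is what strong jump inversion requires. The only place demanding care is the quantifier count of the middle step---in particular, checking that linear independence modulo $p^{n+1}G$ is genuinely $\Pi^0_1$ and that it combines with the existential membership conditions to give $\Sigma^0_2$ rather than a higher level; once this is in place the proposition is a direct analogue of Proposition~\ref{Equivalence}.
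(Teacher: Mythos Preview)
Your proposal is correct and follows essentially the same approach as the paper's proof: both invoke the quoted characterization and reduce to showing that $\{(n,k):u_n(G)\geq k\}$ is $\Sigma^0_2$ relative to $G$, then use lowness to transfer to $X$. The paper's proof simply asserts the $\Sigma^0_2$ bound (having flagged it as ``well-known, and easy to prove'' just before the proposition), whereas you have written out the quantifier count explicitly---this is added detail, not a different route.
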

		
		\begin{proof} 
			
			Suppose $G$ itself is low over $X$.  The set $\{(n,k):u_n(G)\geq k\}$ is $\Sigma^0_2$ relative to $G$, so it is $\Sigma^0_2$ relative to $X$.  Then $G$ has an $X$-computable copy.          
		\end{proof}  
		
	\end{example}
	
	Not all countable structures admit strong jump inversion.  
	
	\begin{example}
		
		Jockusch and Soare \cite{JS} showed that there are low linear orderings with no computable copy.
		
	\end{example}
	
	\begin{example}
		
		Let $T$ be a low completion of $PA$.  There is a model $\mathcal{A}$ such that the atomic diagram $D(\mathcal{A})$, and even the complete diagram $D^c(\mathcal{A})$, are computable in $T$.  Then $D(\mathcal{A})'$ is $\Delta^0_2$.  By a well-known result of Tennenbaum, since $\mathcal{A}$ is necessarily non-standard, there is no computable copy.
		
	\end{example}
	
	In Section 2, we give a general result with sufficient conditions for strong jump inversion.  In Section 3, we give several applications of our general result.  The last of these gives the result of Marker and Miller \cite{MM} saying that all models of $DCF_0$ admit strong jump inversion.  We add a result saying that the countable saturated model of $DCF_0$ has a decidable copy.  In the remainder of Section 1, we mention briefly some further notions of jump and jump inversion.    
	
	\subsection{Further notions of jump and jump inversion}
	
	Recall that a relation $R$ is \emph{relatively intrinsically $\Sigma^0_\alpha$} on a structure $\mathcal{A}$ if in all (isomorphic) copies $\mathcal{B}$ of $\mathcal{A}$, the image of $R$ is $\Sigma^0_\alpha$ relative to $\mathcal{B}$.  By a result of \cite{AKMS} and \cite{C}, these are the relations that are definable in $\mathcal{A}$ by computable $\Sigma_\alpha$ formulas, with parameters.  
	
	There is a computable set of indices for computable $\Sigma_1$ formulas, so we can enumerate, uniformly in $D(\mathcal{A})$, all relations that are relatively intrinsically $\Sigma^0_1$ on $\mathcal{A}$ (\emph{r.i.c.e.}).  Moreover, we can uniformly compute all of these relations from the Turing jump of the diagram, $D(\mathcal{A})'$.  The \emph{jump} of $\mathcal{A}$ is often defined to be a structure $\mathcal{A}'$ obtained by adding to $\mathcal{A}$ a specific named family of r.i.c.e.\ relations, from which all others are effectively obtained.  Then the r.i.c.e. relations on $\mathcal{A}'$ are just those which are relatively intrinsically $\Sigma^0_2$ on $\mathcal{A}$ itself.
	
	\begin{defn} [Canonical jump]
		
		For a structure $\mathcal{A}$, the \emph{canonical jump} is a structure $\mathcal{A}' = (\mathcal{A},(R_i)_{i\in\omega})$, where $(R_i)_{i\in\omega}$  are relations from which we can uniformly compute all r.i.c.e.\ relations on $\mathcal{A}$, and from the index $i$ of the relation $R_i$, we can compute the arity of $R_i$ and a computable $\Sigma_1$ formula (without parameters) that defines it in $\mathcal{A}$.  
	\end{defn}
	
	\begin{remark}
		\label{rem:1}
		The set $\emptyset'$ is included in the canonical jump.  We may give it by a family of relations $R_{f(e)}$, for a computable function $f$, where $R_{f(e)}$ is always true if $e\in\emptyset'$ and always false otherwise.  We may define $R_{f(e)}$ by the computable $\Sigma_1$ formula $\bigvee_s \tau_{e,s}$, where $\tau_{e,s}$ is $\top$ if $e$ has entered $\emptyset'$ by step $s$ and $\bot$ otherwise.  
	\end{remark} 
	
	The original definition of the \emph{jump} of $\mathcal{A}$, as a structure, appears in the Ph.D.\ thesis of Baleva, supervised by Soskov \cite{BalevaThesis,B}.  The definition was later used by  A.~Soskova and Soskov \cite{SS} for some jump inversion theorems.  The definition in \cite{SS} looks slightly different.  Some arithmetic is added to the structure, and the sequence of relations is coded by a single relation.  The domain of $\mathcal{A}'$ is the ``Moschovakis extension'' of $\mathcal{A}$ with an appropriate coding mechanism, and the added relation is one that codes the forcing relation of the computable infinitary $\Sigma_1^0$ formulas as an analogue of Kleene's set $K$.  There is yet another notion of jump, which involves $\Sigma$-definability in the hereditarily finite sets over a base structure.  This notion appears in work of Morozov \cite{Morozov}, Stukachev \cite{S1,Stukachev}, Puzarenko \cite{P}, and others from the Novosibirsk school.  It applies to base structures of arbitrary cardinality.  
	
	Montalb\'{a}n \cite{Montalban} initially used relatively intrinsically $\Pi^0_1$ relations instead of r.i.c.e.\ relations.  The definition given above is a modification of the one in \cite{Montalban}, which was arrived at after some group discussions in Sofia in the summer of 2011.  In the spring of 2012, Russian, Bulgarian, and U.S.\ researchers gathered in Chicago for further discussions of the notions of jump, at the workshop ``Definability in computable structures'', funded mainly by the Packard Foundation.  Later, Montalb\'{a}n proved that the different-looking definitions are equivalent (see \cite{Montalban2}). 
	
	For some structures, there is a smaller subset of the relations that is sufficiently powerful to replace the full set.
	
	\begin{defn}[Structural jump]
		A \emph{structural jump} of $\mathcal{A}$ is an expansion $\mathcal{A}' = (\mathcal{A},(R_i)_{i\in\omega})$ such that each $R_i$ has a $\Sigma_1$ defining formula that we can compute from $i$, and every relation that is relatively intrinsically $\Sigma^0_2$ on $\mathcal{A}$ is r.i.c.e.\ on $\mathcal{A}'\oplus\emptyset'$. 
	\end{defn}
	Here the structure $\mathcal{A}'\oplus\emptyset'$ is the expansion of $\mathcal{A}'$ by a family of relations that encode the set $\emptyset'$, as explained in Remark \ref{rem:1}.
	
	For certain classes of structures, there is a structural jump formed by adding a finite set of such relations.  In particular, the relation $atom(x)$ is sufficient for Boolean algebras, and the successor relation $succ(x,y)$ is sufficient for linear orders.  See \cite{Montalban} for further examples.
	
	\bigskip
	
	There are different statements of ``jump inversion''.  The well-known Friedberg Jump Inversion Theorem says that if $\emptyset'\leq_T Y$, then there is a set $X$ such that $X'\equiv_T Y\equiv \emptyset'\oplus X$.  We can easily produce a structure $\mathcal{B}$ such that $X\equiv_T\mathcal{B}$, and then $Y\equiv_T\mathcal{B}'$.  This is one kind of jump inversion.  A more interesting kind of jump inversion theorem was proved by Soskov and A.\ Soskova \cite{ASJ}, \cite{SS}, and later (independently) by Montalb\'{a}n \cite{Montalban}.
	
	\begin{thm} [Soskov, A.\ Soskova, Montalb\'{a}n]\label{JIT}
		
		For any countable structure $\mathcal{A}$, if $Y$ computes a copy of the canonical jump $\mathcal{A}'$ of 
		$\mathcal{A}$, there exists a set $X$ such that $X'\equiv_T Y$ and $X$ computes a copy of 
		$\mathcal{A}$.
		
	\end{thm}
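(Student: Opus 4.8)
The plan is to adapt the Friedberg jump inversion construction so that the set $X$ we build not only satisfies $X'\equiv_T Y$ but also computes a copy of $\mathcal{A}$. First I would unpack the hypothesis. Having a copy of the canonical jump means $Y$ computes some $\mathcal{C}\cong\mathcal{A}$ together with the relations $R_i^{\mathcal{C}}$. Since the relation ``$\exists$ a finite atomic diagram $\delta$ realizable in $\mathcal{C}$ with $\Phi_e^{\delta}(e){\downarrow}$'' is definable by a computable $\Sigma_1$ formula uniformly in $e$, it is r.i.c.e., so from the $R_i^{\mathcal{C}}$ we recover $D(\mathcal{C})'$; thus $Y\geq_T D(\mathcal{C})'$, and in particular $\emptyset'\leq_T Y$. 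The goal is then to build $X$ by a forcing construction with finite conditions that simultaneously (i) codes a copy $\mathcal{B}\cong\mathcal{A}$ into $X$ and (ii) carries out the Friedberg coding forcing $X'\equiv_T Y$.

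For the structural part, a condition carries a finite binary string $p\prec X$ together with a finite partial map $\sigma$ from the domain we are building into $\mathcal{C}$, where $p$ records exactly the atomic facts dictated by $\sigma$. To meet the requirements that every element receive an image and that the map be onto $\mathcal{C}$, I would extend $\sigma$ one point at a time, using $Y\geq_T D(\mathcal{C})'$ to decide the existential facts needed to place the new point correctly in $\mathcal{C}$. A sufficiently generic $X$ then determines a total, surjective isomorphism $\mathcal{B}\to\mathcal{C}$, so $\mathcal{B}\cong\mathcal{A}$ and $D(\mathcal{B})\leq_T X$.

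For the jump part, I would interleave two families of requirements in the standard Friedberg manner. At the coding stages the choices are made so as to encode $Y$, giving $Y\leq_T X'$. For each $e$, using $\emptyset'\leq_T Y$ I would ask whether the current condition has an extension forcing $\Phi_e^X(e){\downarrow}$, taking such an extension when it exists and otherwise guaranteeing divergence; this is what yields $X'\leq_T Y$. The point is that \emph{every} decision in the construction --- the structural extensions, which need $D(\mathcal{C})'$, and the jump decisions, which need $\emptyset'$ --- is computable from $Y$, so the whole construction is $Y$-computable and $X'$ is determined by $Y$. Combining the two directions gives $X'\equiv_T Y$.

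The main obstacle is the coordination of the two tasks. The structure-building forces existential ($\Sigma^0_1$-over-$\mathcal{C}$) facts, which is exactly why the ordinary jump $\emptyset'$ does not suffice and the canonical jump is required: it is calibrated so that $Y$ can answer precisely these questions. I would have to check that weaving the structural extensions into the Friedberg strategy does not injure the reserved coding locations or the already-forced halting computations, and that genericity remains strong enough to force surjectivity of the isomorphism. Verifying this compatibility, and that the resulting $X$ has $X'\leq_T Y$ and not more, is the delicate step; the two ingredients individually are routine, but their interaction is where the argument must be arranged with care.
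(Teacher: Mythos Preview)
The paper does not prove this theorem; it is quoted as a known result of Soskov, A.~Soskova, and Montalb\'an and then invoked as a black box in the proof of the subsequent proposition. So there is no paper proof to compare your attempt against.

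Your outline is essentially the standard argument for this result, and the overall plan is sound. Two details need tightening. First, once conditions are finite diagrams required to be realizable in $\mathcal{C}$ via your map $\sigma$, the question ``does the current condition have a \emph{legitimate} extension forcing $\Phi_e^X(e){\downarrow}$?'' is $\Sigma^0_1$ relative to $D(\mathcal{C})$, not relative to $\emptyset$; so this step uses $D(\mathcal{C})'\leq_T Y$, not merely $\emptyset'\leq_T Y$. You say as much in your last paragraph, but the earlier sentence attributing the jump decisions to $\emptyset'$ alone is inaccurate. Second, you write that ``at the coding stages the choices are made so as to encode $Y$,'' but if $p$ records \emph{exactly} the atomic facts dictated by $\sigma$, then every bit of $X$ is determined by $\sigma$ and the isomorphism type of $\mathcal{C}$, leaving no free bit in which to plant the Friedberg coding of $Y$. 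The usual remedy is to let $X$ carry a second column not constrained by $\sigma$ (so that $D(\mathcal{B})$ sits on one column and the Friedberg coding on the other), or equivalently to work with conditions $(p,\sigma)$ in which $p$ properly extends the diagram information by some free bits. With either adjustment the interleaving goes through as you describe.
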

	
	The proposition below shows that we can express strong jump inversion in terms of copies of the canonical jump structure $\mathcal{A}'$, as opposed to the Turing jump of the atomic diagram for various copies 
	$\mathcal{B}$.
	
	\begin{prop} For any structure $\mathcal{A}$, the following are equivalent:
		
		\begin{enumerate}
			
			\item $\mathcal{A}$ admits strong jump inversion.   
			
			\item For all sets $X$, if $X'$ computes a copy of the canonical jump $\mathcal{A}'$ of $\mathcal{A}$, then $X$ computes a copy of $\mathcal{A}$.
			
			\item  For all sets $X$ and $Y$, if $X'\equiv_T Y'$ and $Y$ computes a copy of $\mathcal{A}$ then so does  $X$.
			
		\end{enumerate}
		
	\end{prop}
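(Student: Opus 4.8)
The plan is to prove the three statements equivalent by establishing the cycle $(1)\Rightarrow(3)\Rightarrow(2)\Rightarrow(1)$, relying on only two external inputs: the easy half of the jump/diagram correspondence, namely that $D(\mathcal{C}')\leq_T D(\mathcal{C})'$ (which follows from the fact, quoted above, that all r.i.c.e.\ relations are uniformly computable from $D(\mathcal{C})'$), and the jump inversion Theorem~\ref{JIT}. Before starting the cycle I would record two routine observations. First, if $\mathcal{C}\cong\mathcal{A}$ via an isomorphism $\varphi$, then, because the relations of the canonical jump are given by parameter-free computable $\Sigma_1$ formulas and isomorphisms preserve satisfaction, the same $\varphi$ witnesses $\mathcal{C}'\cong\mathcal{A}'$. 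Second, the jump is monotone, so $D(\mathcal{C})\leq_T Y$ implies $D(\mathcal{C})'\leq_T Y'$.

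For $(1)\Rightarrow(3)$, assume (1) and take $X,Y$ with $X'\equiv_T Y'$ and $Y$ computing a copy $\mathcal{C}$ of $\mathcal{A}$. Then $D(\mathcal{C})\leq_T Y$, so by monotonicity $D(\mathcal{C})'\leq_T Y'\equiv_T X'$; thus $X'$ computes $D(\mathcal{C})'$ for a copy $\mathcal{C}\cong\mathcal{A}$, and (1) yields a copy of $\mathcal{A}$ computable from $X$. For $(2)\Rightarrow(1)$, assume (2) and take $X$ such that $X'$ computes $D(\mathcal{C})'$ for some $\mathcal{C}\cong\mathcal{A}$. Using $D(\mathcal{C}')\leq_T D(\mathcal{C})'$ together with $\mathcal{C}'\cong\mathcal{A}'$, the set $X'$ computes $D(\mathcal{C}')$, which is a copy of the canonical jump $\mathcal{A}'$; then (2) supplies a copy of $\mathcal{A}$ computable from $X$.

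The one step carrying real content is $(3)\Rightarrow(2)$, and here I would invoke Theorem~\ref{JIT}. Assume (3) and take $X$ such that $X'$ computes a copy of the canonical jump $\mathcal{A}'$. Applying Theorem~\ref{JIT} with the set $X'$ in the role of the oracle that computes a copy of $\mathcal{A}'$, we obtain a set $Z$ with $Z'\equiv_T X'$ and $Z$ computing a copy of $\mathcal{A}$. Now (3), applied to the pair $X,Z$ (which satisfies $X'\equiv_T Z'$ and has $Z$ computing a copy of $\mathcal{A}$), yields a copy of $\mathcal{A}$ computable from $X$, closing the cycle.

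The main obstacle is really a temptation to avoid: one might try to match the hypotheses of (1) and (2) directly by proving the reverse inequality $D(\mathcal{C})'\leq_T D(\mathcal{C}')$. That inequality is exactly what is not available in general. From $D(\mathcal{C}')$ one can only \emph{enumerate} $D(\mathcal{C})'$, since the positive halting facts about $D(\mathcal{C})$ are captured by r.i.c.e.\ relations while recognizing non-halting requires co-r.i.c.e.\ information, and the extra copy of $\emptyset'$ built into the canonical jump does not supply the relativized complement. Organizing the proof so that the only nontrivial link, $(3)\Rightarrow(2)$, passes through jump inversion rather than through such a reduction is what makes the argument go through with only the easy direction $D(\mathcal{C}')\leq_T D(\mathcal{C})'$ in hand.
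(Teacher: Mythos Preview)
Your proof is correct and follows essentially the same cycle $(1)\Rightarrow(3)\Rightarrow(2)\Rightarrow(1)$ as the paper, with the same use of $D(\mathcal{C}')\leq_T D(\mathcal{C})'$ for $(2)\Rightarrow(1)$ and of Theorem~\ref{JIT} for $(3)\Rightarrow(2)$. Your additional remarks about $\mathcal{C}'\cong\mathcal{A}'$ and about why the reverse reduction $D(\mathcal{C})'\leq_T D(\mathcal{C}')$ should be avoided are helpful elaborations, but the core argument matches the paper's exactly.
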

	
	\begin{proof} 
		
		For $(2) \Rightarrow (1)$, assume $\mathcal{A}$ has a copy $\mathcal{B}$ with $(D(\mathcal{B}))' \leq_T X'$.  Since $D(\mathcal{B}')\leq_T (D(\mathcal{B}))'\leq_T X'$, $(2)$ implies that $X$ computes a copy of $\mathcal{A}$.  
		
		For $(1) \Rightarrow (3)$, let $X'\equiv_T Y'$, where $Y$ computes a copy $\mathcal B$ of $\mathcal A$. Then $X'$ computes $D(\mathcal B)'$. By $(1)$, $X$ computes a copy of $\mathcal{A}$.
		
		For $(3) \Rightarrow (2)$, suppose $X'$ computes a copy of $\mathcal{A}'$. By Theorem~\ref{JIT}, there exists $Y$ such that $Y$ computes a copy of $\mathcal{A}$ and $Y'\equiv_T X'$.  By $(3)$, $X$ computes a copy of $\mathcal{A}$.
	\end{proof} 
	
	\section{General result}
	\label{general}
	
	In this section, we give a result with conditions sufficient to guarantee that a structure admits strong jump inversion.  The result is not difficult to prove.  However, there are a number of examples where it applies. To state the result, we need some definitions.
	
	\begin{defn}
		
		Let $S$ be a countable family of sets.  An \emph{enumeration} of $S$ is a set $R$ of pairs $(i,k)$ such that $S$ is the family of sets $R_i = \{k:(i,k)\in R\}$.  If $A = R_i$, we say that $i$ is an \emph{$R$-index} for $A$.
		
	\end{defn}
	
	\noindent
	\textbf{Note}:  When we say that $R$ is a \emph{computable enumeration} of a family of sets, we mean that $R$ is a computable set of pairs.  This means that the sets $R_i$ are \emph{computable}, uniformly in $i$.  Some researchers have used the term differently, saying that $R$ is a \emph{computable enumeration} if the sets $R_i$ are uniformly \emph{computably enumerable}. 
	
	\bigskip
	
	Below, we define $B_n$-types precisely.  We shall focus on $B_1$-types.  
	
	\begin{defn}\
		
		\begin{enumerate}
			
			\item  A \emph{$B_n$-formula} is a finite Boolean combination of ordinary finite elementary $\Sigma_n$-formulas.
			
			\item  A \emph{$B_n$-type} is the set of $B_n$-formulas in the complete type of some tuple in some structure for the language.  
			
		\end{enumerate}
		
	\end{defn} 
	
	\begin{defn}
		
		Fix a structure $\mathcal{A}$.  Let $S$ be a set of $B_1$-types including all those realized in $\mathcal{A}$.  Let $R$ be an enumeration of $S$.  An \emph{$R$-labeling of $\mathcal{A}$} is a function taking each tuple $\bar{a}$ in $\mathcal{A}$ to an $R$-index for the $B_1$-type of $\bar{a}$.
		
	\end{defn}
	
	We are interested in structures $\mathcal{A}$ with the following property.  
	
	\begin{defn} [Effective type completion]
		
		The structure $\mathcal{A}$ satisfies \emph{effective type completion} if there is a uniform effective procedure that, given a $B_1$-type $p(\bar{u})$ realized in $\mathcal{A}$ and an existential formula $\varphi(\bar{u},x)$ such that $(\exists x)\varphi(\bar{u},x)\in p(\bar{u})$, yields a $B_1$-type $q(\bar{u},x)$ with $\varphi(\bar{u}, x) \in q(\bar{u}, x)$, such that if $\bar{a}$ in $\mathcal{A}$ realizes $p(\bar{u})$, then some $b$ in $\mathcal{A}$ realizes $q(\bar{a},x)$.     
		
	\end{defn}
	
	Here is our general result.
	
	\begin{thm}
		\label{simple.general}
		
		A structure $\mathcal{A}$ admits strong jump inversion if it satisfies the following conditions:
		
		\begin{enumerate}
			
			\item  There is a computable enumeration $R$ of a set of $B_1$-types including all those realized by tuples in 
			$\mathcal{A}$.
			
			\item  $\mathcal{A}$ satisfies effective type completion.
			
			\item  For all sets $X$, if $X'$ computes the jump of some copy of $\mathcal{A}$, then $X'$ computes a copy of $\mathcal{A}$ with an $R$-labeling. 
			
		\end{enumerate}
		Moreover, if $\mathcal{C}$ is a copy of $\mathcal{A}$ with an $X'$-computable $R$-labeling, then we get an $X$-computable copy $\mathcal{B}$ of $\mathcal{A}$ with an $X'$-computable isomorphism from $\mathcal{B}$ to $\mathcal{C}$.     
		
	\end{thm}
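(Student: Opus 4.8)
The plan is to reduce immediately to the final ``moreover'' assertion and then build $\mathcal{B}$ together with the isomorphism $g\colon\mathcal{B}\to\mathcal{C}$ by a back-and-forth. Suppose $X'$ computes $D(\mathcal{C}_0)'$ for some copy $\mathcal{C}_0\cong\mathcal{A}$. By condition (3), $X'$ then computes a copy $\mathcal{C}$ of $\mathcal{A}$ together with an $R$-labeling $\ell$, where $\ell(\bar c)$ is an $R$-index for the $B_1$-type of $\bar c$ in $\mathcal{C}$. So it suffices to produce, from such a pair $(\mathcal{C},\ell)$, an $X$-computable copy $\mathcal{B}$ and an $X'$-computable isomorphism onto $\mathcal{C}$.

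The guiding observation is that every quantifier-free formula is a $B_1$-formula, so the $B_1$-type of a tuple decides its atomic diagram, and since $R$ is a \emph{computable} enumeration, an $R$-index for the $B_1$-type of $\bar b$ lets us read off, computably, every atomic fact about $\bar b$. Consequently a bijection that preserves $B_1$-types is automatically an isomorphism, and a copy of $\mathcal{A}$ for which we have computed $R$-indices for the types of all its initial segments has an atomic diagram of the same Turing degree as that index function. I would therefore build $\mathcal{B}$ by maintaining, at each stage, a finite tuple $\bar b$ in $\mathcal{B}$ carrying an explicit $R$-index for its $B_1$-type $p$, and a matched tuple $\bar c=g(\bar b)$ in $\mathcal{C}$ with $\ell(\bar c)$ naming the same type $p$. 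In the forth step, when a new $b$ is to be covered, its type $q$ over $\bar b$ has already been fixed and contains an existential formula with $(\exists x)\varphi\in p$; effective type completion guarantees that the realization promised in $\mathcal{A}$ also occurs in $\mathcal{C}\cong\mathcal{A}$, so a matching $c\in\mathcal{C}$ can be found (using $\ell$, hence $X'$). In the back step, when a new $c\in\mathcal{C}$ is to be covered, I read its type over $\bar c$ from $\ell$ and apply effective type completion to $p$ and a witnessing existential formula drawn from that type, attaching a fresh element $b$ to $\bar b$ whose atomic relationship to $\bar b$ is dictated by the resulting realizable $B_1$-type.

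The degree bookkeeping is the crux. The map $g$ is built from $\ell$ by searching through $\mathcal{C}$, so $g\leq_T X'$, which is all we claim for the isomorphism. For $\mathcal{B}$ itself, the point is that every atomic fact committed to $\mathcal{B}$ is produced by the \emph{computable} effective-type-completion procedure from an already-fixed $R$-index and a syntactic existential formula; no query to $\mathcal{C}$ is used to decide an atomic fact, only to decide the correspondence $g$. Using the Limit Lemma to replace $\ell$ by an $X$-computable approximation $\ell_s$, I would run the back-and-forth at the $X$ level, so that changes in the approximation are absorbed into revisions of the (finitely often injured, hence converging) correspondence $g$ rather than into the permanently committed types of $\mathcal{B}$. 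Because effective type completion only ever extends to realizable $B_1$-types, a committed atomic fact is never inconsistent and never has to be retracted; this is what keeps $D(\mathcal{B})$ computable in $X$ while $g$ lives at the level of $X'$.

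The main obstacle, accordingly, is exactly this $X$-versus-$X'$ separation: showing that the atomic diagram can be fixed by the computable type-completion steps while the $X'$-level information in $\ell$ is confined to the choice of matches. The back-and-forth must therefore be arranged so that $\mathcal{B}$ realizes each $B_1$-type with enough multiplicity to permit rematching whenever $\ell_s$ revises its guess about a type in $\mathcal{C}$, so that $g$ stabilizes on each argument and is onto in the limit. That the forth and back steps can always be carried out is a direct consequence of effective type completion together with $\mathcal{C}\cong\mathcal{A}$, and that $g$ converges to an isomorphism is then routine; the only genuinely delicate point is keeping the atomic commitments permanent and $X$-computable across the injuries to $g$.
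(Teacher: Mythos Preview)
Your overall architecture matches the paper's: reduce to the ``moreover'' clause, approximate the $X'$-computable labeling $\ell$ by an $X$-computable $\ell_s$ via the Limit Lemma, run a finite-injury back-and-forth in which atomic commitments to $\mathcal{B}$ are permanent while the correspondence $g$ is allowed to be revised. But you have inverted the role of effective type completion, and this creates a real gap.

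In your back step you read the type of $(\bar c,c)$ from $\ell$, pick an existential formula $\varphi$ out of it, and then apply effective type completion to $(p,\varphi)$ to produce a type $q$ which you assign to the fresh $b$. Effective type completion only promises \emph{some} realizable type containing $\varphi$; it need not return the type of $c$. So the $b$ you create has $B_1$-type $q$, possibly different from the type of $c$ over $\bar c$, and you cannot set $g(b)=c$. Your construction would then yield an embedding into $\mathcal{C}$ but not a surjection. The paper handles the back step differently: it uses the \emph{guessed} $R$-index $\ell_s(\bar c,a)$ directly, and commits to $\mathcal{B}$ only a finite atomic fragment $\delta$ consistent with that guess---not a full $B_1$-type. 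If the guess later changes, the partial isomorphism is truncated, but the finite $\delta$ already written into $D(\mathcal{B})$ remains.

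Effective type completion is then invoked in the \emph{forth} step, precisely to rescue such orphaned elements. An orphaned $b$ carries only the committed finite $\delta(\bar d,b,\bar b)$; one applies effective type completion to the (currently believed) type $p(\bar u)$ of $\bar c$ and the existential formula $(\exists\bar v)\,\delta(\bar u,x,\bar v)$, obtaining a type $q(\bar u,x)$ with a \emph{computable} (not $R$-) index, guaranteed to be realized over any tuple of type $p$. One then searches $\mathcal{C}$, using $\ell_s$, for an element whose labeled type agrees with $q$ on more and more formulas, revising this choice as needed. So the asymmetry is essential: back steps use the unstable guess $\ell_s$ and commit only finite atomic data; forth steps use the stable, computable output of effective type completion to guarantee a target exists. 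Your proposal's sentence ``its type $q$ over $\bar b$ has already been fixed'' in the forth step, and the remark about ``enough multiplicity,'' both obscure this: only a finite atomic fragment is fixed, and the point is not multiplicity in $\mathcal{B}$ but the guaranteed realizability in $\mathcal{C}$ furnished by effective type completion.
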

	
	\begin{remark}
		\label{SimpleCondition3}
		
		For some structures $\mathcal{A}$, Condition (3) is satisfied in a strong way.  For any $\mathcal{C}\cong\mathcal{A}$, $D(\mathcal{C})'$ computes an $R$-labeling of $\mathcal{C}$.  Hence, if $\mathcal{A}$ is low, there is a $\Delta^0_2$ isomorphism from $\mathcal{A}$ to a computable copy. 
		
	\end{remark}          
	
	\begin{proof} [Proof of Theorem \ref{simple.general}]
		
		Suppose that $\mathcal{A}$ satisfies the three conditions.  Let $X$ be a set such that $X'$ computes the jump of some copy of $\mathcal{A}$.  By Condition (3), $X'$ computes a copy with an $R$-labeling.  We must show that there is an $X$-computable copy.  For simplicity, we suppose that $\mathcal{A}$ has a $\Delta^0_2$ $R$-labeling, and we produce a computable copy $\mathcal{B}$, basing our construction on guesses at various portions of the $R$-labeling of $\mathcal{A}$.  Note that once we have guessed the label for a tuple $\bar{a}$ correctly, we computably know the entire $B_1$-type of that tuple.  We build a computable copy $\mathcal{B}$ and a $\Delta^0_2$ isomorphism $f$ from $\mathcal{B}$ to $\mathcal{A}$.  We have the following requirements. 
		
		\bigskip
		\noindent
		\textbf{$R_{2a}$}:  $a\in ran(f)$
		
		\bigskip
		\noindent
		\textbf{$R_{2b+1}$}:  $b\in dom(f)$
		
		\bigskip
		
		We start with an $R$-index for the type of $\emptyset$, where this type is the $B_1$-theory of $\mathcal{A}$.  At each stage $s$, we have a tentative partial isomorphism $f_s$ mapping a tuple $\bar{d}$ from $\mathcal{B}$ to a tuple $\bar{c}$ in $\mathcal{A}$, where the $R$-indices of the types of $\bar{c}$ and all of its initial segments still look correct.  (At a later stage $t$, we may see that some of the guesses at these indices are incorrect, and we retain only the portion of $f_s$ satisfying an initial segment of requirements based on guesses at $R$-indices that all look correct.)  Moreover, we have enumerated a finite part $\delta(\bar{d},\bar{b})$ of the atomic diagram of
		$\mathcal{B}$; this can never change, since $\mathcal{B}$ must be computable.  We will have checked the consistency of $\delta(\bar{d},\bar{b})$  with our guesses at the $R$-indices of the $B_1$-types of the tuple $\bar{c}$ and its initial segments.  Supposing that the function taking $\bar{d}$ to $\bar{c}$ satisfies the earlier requirements, we can satisfy the requirement $R_{2a}$ once we guess the $R$-index for the $B_1$-type $p(\bar{u},x)$ of $\bar{c},a$.  We map some $b$, either old or new, to $a$ so that $\delta(\bar{u},\bar{v})$ is consistent with $p(\bar{u},x)$.  (Recall that the $B_1$-types are computable.)
		
		Suppose that the function taking $\bar{d}$ to $\bar{c}$ satisfies the requirement $R_i$ for all $i < 2b+1$, and $R_{2b + 1}$ is least that is unsatisfied at this stage $s$.  Again, we assume that we have correct guesses on the $R$-indices for the $B_1$ types of $\bar{c}$ and all of its initial segments; let $p(\bar{u})$ be the $B_1$-type of $\bar{c}$.  Finally, we have put $\delta(\bar{d},b,\bar{b})$ in the atomic diagram of $\mathcal{B}$.  Now we use the assumption of effective type completion.  We determine, effectively in $p(\bar{u})$ and the existential formula $(\exists\bar{v})\delta(\bar{u},x,\bar{v})$, a type $q(\bar{u},x)$ appropriate for $\bar{c}$ and a putative $f_s(b)$.  If $\bar{c}$ realizes $p(\bar{u})$, then some $a$ will realize $q(\bar{c},x)$.  At step $s$, we can give a computable index for $q(\bar{u},x)$, but not an $R$-index.   
		
		By effective type completion, if $p(\bar{u})$ really is the $B_1$-type of $\bar{c}$, then $q(\bar{c},x)$ will be realized in $\mathcal{A}$.  We define $f_s(b)$ as follows.  We find the first $a$ such that, based on our guess at the $R$-index of the $B_1$ type of $\bar{c}, a$, this type and $q(\bar{u}, x)$ agree on the first $s$ formulas; then $f_s(b) = a$.  Of course, this guess at the element $a$ is likely wrong.  Therefore, in order to guarantee that this requirement is satisfied, at each subsequent stage $t$, we need to check that, based on our guess at the $R$-index of the $B_1$ type of $\bar{c}, a$, this type and $q(\bar{u}, x)$ agree on the first $t$ formulas.  If this is not the case, then we need to re-define $f_t(b)$, but always maintaining $q(\bar{u}, x)$ as the guaranteed type of $q(\bar{c}, f(b))$, so long as our work on earlier requirements seems correct.  (In particular, note that as we check consistency of the atomic diagram with the $B_1$ types associated with requirement $R_{2b+1}$, we use the computable index for $q(\bar{u}, x)$.)   
		There is a first $a$ realizing $q(\bar{c}, x)$, and eventually, we will have the $R$-index for the $B_1$ type of 
		$\bar{c},a$.  Then we will have $f_s(b) = f(b) = a$.   
	\end{proof}
	
	In several examples, $\mathcal{A}$ has effective type completion because it satisfies a property that we call \emph{weak $1$-saturation}.  To describe this property, we need a preliminary definition.     
	
	\begin{defn}
		
		Suppose $p(\bar{u})$ and $q(\bar{u},x)$ are $B_1$-types.  We say that $q(\bar{u},x)$ is \emph{generated by the formulas of $p(\bar{u})$ and existential formulas} provided that $q(\bar{u},x)\supseteq p(\bar{u})$, and for any universal formula $\psi(\bar{u},x)$ (in the indicated variables), writing $neg(\psi)$ for the natural existential formula logically equivalent to $\neg{\psi}$, we have $\psi(\bar{u},x)\in q(\bar{u},x)$ iff there is a finite conjunction $\chi(\bar{u},x)$ of existential formulas in $q(\bar{u},x)$ such that $(\exists x)[\chi(\bar{u},x)\ \&\ neg(\psi(\bar{u},x))]$ is not in $p(\bar{u})$.
		
	\end{defn}
	
	\begin{defn}
		
		The structure $\mathcal{A}$ is \emph{weakly $1$-saturated} provided that if $p(\bar{u})$ is the $B_1$-type of a tuple $\bar{a}$, and $q(\bar{u},x)$ is a $B_1$-type generated by formulas of $p(\bar{u})$ and existential formulas, then $q(\bar{a},x)$ is realized in $\mathcal{A}$.
		
	\end{defn}
	
	The following is clear.  
	
	\begin{lem}
		\label{prop:existential-types}
		
		Let $p(\bar{u})$ be a $B_1$-type.  Suppose $q(\bar{u},x)$ is a $B_1$-type that is generated by formulas of $p(\bar{u})$ and existential formulas.  Then $q(\bar{u},x)$ is consistent with all extensions of $p(\bar{u})$ to a complete type in variables $\bar{u}$.
		
	\end{lem}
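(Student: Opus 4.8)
The plan is to prove consistency via the compactness theorem, reducing to finite satisfiability and exploiting two facts: that a $B_1$-type, being the set of $B_1$-formulas true of an actual tuple, is a \emph{complete} $B_1$-type that ``splits disjunctions'', and that the generation condition converts each universal formula of $q$ into a first-order consequence of $p$.

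First I would record two preliminary observations. Since $q(\bar u,x)$ is by definition the set of $B_1$-formulas in the complete type of some tuple $(\bar a,b)$ in some model $M$, and $B_1$ is closed under Boolean combinations, $q$ is complete: for every $B_1$-formula $\theta$ exactly one of $\theta,\neg\theta$ lies in $q$, and if a disjunction of $B_1$-formulas lies in $q$ then so does one of the disjuncts (as $(\bar a,b)$ must satisfy one disjunct). The same holds for $p$, and since $q\supseteq p$ with both complete in $\bar u$, the reduct $q\upharpoonright\bar u$ equals $p$; in particular $\hat\chi\in q$ forces $(\exists x)\hat\chi\in p$ for any existential $\hat\chi$, because $(\bar a,b)$ witnesses it.

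Then, fixing a complete type $P(\bar u)\supseteq p(\bar u)$ realized by $\bar c$ in a model $N$, I would show $q\cup P$ is finitely satisfiable. A finite subset yields a single $B_1$-formula $\sigma\in q$ and a single $\theta\in P$. Putting $\sigma$ in disjunctive normal form over existential literals and using that $(\bar a,b)$ realizes $\sigma$, I extract one disjunct $\chi\wedge\psi\in q$ with $\chi$ a conjunction of existential formulas and $\psi$ a conjunction of universal formulas; it suffices to satisfy $\chi\wedge\psi\wedge\theta$. Applying the generation hypothesis to the universal $\psi\in q$ produces a conjunction $\chi^{*}$ of existential formulas in $q$ with $(\exists x)[\chi^{*}\wedge neg(\psi)]\notin p$; by completeness of $p$ its negation, namely $(\forall x)[\chi^{*}\to\psi]$, lies in $p\subseteq P$. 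Setting $\hat\chi=\chi\wedge\chi^{*}$, the first observation gives $(\exists x)\hat\chi\in p\subseteq P$, so $N$ furnishes $d$ with $\hat\chi(\bar c,d)$; then $\chi(\bar c,d)$ and $\chi^{*}(\bar c,d)$ hold, whence $\psi(\bar c,d)$ follows from the universal sentence now known to hold of $\bar c$, and $\theta(\bar c)$ holds since $\theta\in P$. Thus $\chi\wedge\psi\wedge\theta$ is satisfied at $(\bar c,d)$, and compactness completes the argument.

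\textbf{The main obstacle} is the bookkeeping around the generation condition: verifying that the disjunctive normal form legitimately reduces a general $B_1$-formula of $q$ to a single conjunction of existentials times a universal, that $neg(\psi)$ together with the completeness of $p$ converts the hypothesis into the usable constraint $(\forall x)[\chi^{*}\to\psi]\in P$, and that $p$ is exactly the $\bar u$-reduct of $q$, so that the witnessing existential formula $(\exists x)\hat\chi$ is genuinely available in $P$. Once these equivalences are pinned down, the production of the single witness $d$ and the automatic satisfaction of $\psi$ are routine.
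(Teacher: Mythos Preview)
Your argument is correct. The paper itself gives no proof of this lemma, stating only that ``the following is clear,'' so you have supplied the details the authors omitted. Your approach---compactness reduced to finite satisfiability, a DNF reduction of an arbitrary $B_1$-formula in $q$ to a conjunct $\chi\wedge\psi$ with $\chi$ existential and $\psi$ universal, and then use of the generation clause to extract $\chi^{*}$ with $\neg(\exists x)[\chi^{*}\wedge neg(\psi)]\in p\subseteq P$---is exactly the unpacking the definition invites, and each step checks out. In particular, your two preliminary observations (that $q\upharpoonright\bar u=p$ because both are complete $B_1$-types and one contains the other, and that $(\exists x)\hat\chi\in p$ whenever $\hat\chi$ is an existential conjunction in $q$) are the points that make the argument go through, and you have justified them correctly. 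The only cosmetic remark is that $(\forall x)[\chi^{*}\to\psi]$ is, strictly, the logical equivalent of the $\Pi_1$-formula $\neg(\exists x)[\chi^{*}\wedge neg(\psi)]$ that actually lies in $p$; since $P$ is a complete type this distinction is immaterial.
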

	
	\begin{prop}
		
		If $\mathcal{A}$ is weakly $1$-saturated, then it satisfies effective type completion.
		
	\end{prop}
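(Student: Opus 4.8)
The plan is to let $q(\bar u, x)$ be a $B_1$-type generated by the formulas of $p(\bar u)$ and existential formulas, chosen so that $\varphi \in q$, and to produce it by a greedy effective construction. Once such a $q$ is in hand, weak $1$-saturation does the rest: if $\bar a$ in $\mathcal A$ has $B_1$-type $p$, then since $q$ is generated by the formulas of $p$ and existential formulas, $q(\bar a, x)$ is realized by some $b$ in $\mathcal A$, which is exactly the conclusion of effective type completion. So the real content is (i) to define $q$ by a uniform effective procedure from $p$ and $\varphi$, and (ii) to check that the $q$ so defined is a genuine $B_1$-type, contains $\varphi$, extends $p$, and satisfies the generation condition.

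For the construction, I would first note that a complete $B_1$-type is determined by the set of existential formulas it contains, since every $B_1$-formula is a Boolean combination of existential formulas; so it suffices to specify the set $E_q$ of existential formulas in $\bar u, x$ lying in $q$. Fix a computable enumeration $\theta_0, \theta_1, \ldots$ of all existential formulas in $\bar u, x$ with $\theta_0 = \varphi$, and build finite sets $E_0 \subseteq E_1 \subseteq \cdots$ greedily: put $E_0 = \{\varphi\}$, and at stage $n+1$ set $E_{n+1} = E_n \cup \{\theta_{n+1}\}$ if $(\exists x)\chi \in p$ for every finite conjunction $\chi$ of formulas in $E_n \cup \{\theta_{n+1}\}$, and otherwise set $E_{n+1} = E_n$. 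Let $E_q = \bigcup_n E_n$ and close under Boolean combinations. Since each $E_n$ is finite and membership in $p$ is decidable, each stage checks only finitely many conditions, so $q$ is computable uniformly in (an index for) $p$ and in $\varphi$. Because $(\exists x)\varphi \in p$ by hypothesis, $\varphi$ is added at stage $0$, so $\varphi \in q$.

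Next I would verify the generation condition. The invariant maintained throughout is that $(\exists x)\chi \in p$ for every finite conjunction $\chi$ of formulas in $E_q$, since when the last conjunct of such a $\chi$ was added the relevant condition was explicitly checked. Translating the generation condition through $neg$, for an existential $\theta$ it reads: $\theta \in q$ iff $(\exists x)[\chi\,\&\,\theta] \in p$ for every finite conjunction $\chi$ of existential formulas in $q$. If $\theta = \theta_n \in E_q$, the invariant applied to conjunctions having $\theta$ as a conjunct gives the right-hand side; if $\theta_n \notin E_q$, then $\theta_n$ was rejected at stage $n$, which—since $E_n$ itself obeys the invariant—can only be because some $\chi'\,\&\,\theta_n$ with $\chi'$ from $E_n \subseteq E_q$ has $(\exists x)[\chi'\,\&\,\theta_n] \notin p$, so the right-hand side fails. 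A similar computation gives $q \supseteq p$: each existential formula of $p$ is added (its conjunctions with members of $E_q$ reduce, over $p$, to formulas already in $p$), while each existential formula whose negation lies in $p$ is excluded. Hence $q$ is generated by the formulas of $p$ and existential formulas.

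It then remains to confirm that $q$ is an honest $B_1$-type, and this is where I expect the only genuine subtlety. Here Lemma~\ref{prop:existential-types} does the work: since $q$ is generated by the formulas of $p$ and existential formulas, it is consistent with every extension of $p$ to a complete type in $\bar u$; taking the complete type of a realization $\bar a \models p$ in $\mathcal A$ shows $q$ is consistent, and since $q$ decides every existential formula it is a complete, finitely satisfiable set of $B_1$-formulas, hence realized by a tuple in some structure and so a $B_1$-type. Applying weak $1$-saturation to this $q$ and to $\bar a$ yields $b$ in $\mathcal A$ realizing $q(\bar a, x)$. The main obstacle is thus not a deep idea but the bookkeeping in the greedy construction: one must enumerate so that $\varphi$ enters first, maintain the consistency invariant, and check that the resulting $E_q$ matches the generation condition in both directions rather than merely one.
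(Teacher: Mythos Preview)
Your proposal is correct and follows essentially the same route as the paper: a greedy construction that enumerates the existential formulas (starting with $\varphi$), adds each one if its conjunction with what has already been accepted remains consistent over $p$, and then appeals to weak $1$-saturation to realize the resulting $q(\bar a,x)$. You are in fact more thorough than the paper in explicitly verifying the generation condition, the inclusion $q\supseteq p$, and that $q$ is an honest $B_1$-type (via Lemma~\ref{prop:existential-types}); the paper simply asserts these points and moves directly to the application of weak $1$-saturation.
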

	
	\begin{proof}
		
		Let $p(\bar{u})$ be a $B_1$-type, and suppose $\varphi(\bar{u},x)$ is an existential formula such that $(\exists x)\varphi(\bar{u},x)\in p(\bar{u})$.  We effectively produce a type $q(\bar{u},x)$ extending $p(\bar{u})$ and containing the formula $\varphi(\bar{u},x)$, such that if $\bar{a}$ realizes $p(\bar{u})$, then some $b$ realizes $q(\bar{a},x)$.  The type $q(\bar{u},x)$ is generated by formulas of $p(\bar{u})$ and existential formulas, including the formula $\varphi(\bar{u},x)$.  We determine this $B_1$-type computably as follows.  We start with $p(\bar{u})$ and $\varphi(\bar{u},x)$.  We have a computable list $(\varphi_n(\bar{u},x))_{n\in\omega}$ of all existential formulas in variables $\bar{u},x$, in order of G\"{o}del number.  We consider these formulas, in order, and we put $\varphi_n(\bar{u},x)$ into $q(\bar{u},x)$ iff it is consistent with what we have already put into $q(\bar{u},x)$.  (This consistency check is computable relative to $p(\bar{u})$, because it entails only asking whether the relevant $B_1$ formulas are in $p(\bar{u})$.)  If we fail to put $\varphi_n(\bar{u},x)$ in, then all tuples satisfying what we did put in must satisfy $neg(\varphi_n(\bar{u},x))$, so that is in $q(\bar{u},x)$.  Knowing exactly which existential formulas are in $q(\bar{u},x)$, we can determine which $B_1$ formulas are in (using truth tables).  We have described an effective procedure for determining $q(\bar{u},x)$.  By weak $1$-saturation, there is some $b$ in $\mathcal{A}$ realizing $q(\bar{a},x)$.    
	\end{proof}
	
	\section{Examples}
	\label{examples}
	
	In this section, we consider some examples of structures that admit strong jump inversion.  The examples are chosen to illustrate the use of Theorem \ref{simple.general}.  In Subsection \ref{linear orderings}, we discuss two special kinds of linear orderings.  For both, we can apply Theorem \ref{simple.general}.  For the first, Condition (3) holds in a strong way, as in Remark \ref{SimpleCondition3}.  In Subsection \ref{Boolean algebras}, we consider Boolean algebras with no $1$-atoms.  The result of Downey and Jockusch says that every low Boolean algebra has a computable copy.  In the case where there are no $1$-atoms, our result gives a $\Delta^0_3$ isomorphism from a low copy to a computable one.  In Subsection \ref{trees}, we apply Theorem \ref{simple.general} to some 
	special classes of trees.  
	
	In Subsection \ref{small}, we consider models of an $\aleph_0$-categorical elementary first order theory $T$ such that $T\cap\Sigma_2$ is computably enumerable.  The fact that the $B_1$-types are all isolated makes it easy to produce a computable enumeration.  By contrast, in Subsection \ref{DCF_0}, we consider models of the theory of differentially closed fields of characteristic $0$.  Here, although the theory is decidable, with all types computable, producing a computable enumeration of them is not trivial.  We get a result of Marker and R.\ Miller \cite{MM} saying that all models of $DCF_0$ admit strong jump inversion.  Moreover, a result of Morley in \cite{Morley} implies that, since the types of the theory have a computable enumeration, the saturated model of $DCF_0$ has a decidable copy.         
	
	\subsection{Linear orderings}
	\label{linear orderings}
	
	The second author proved strong jump inversion for two special classes of linear orderings, with further results on complexity of isomorphisms.  The results are given in \cite{F1}, \cite{F2}, \cite{F}.  Here we  prove these results using Theorem~\ref{simple.general}.
	
	First, we describe the possible $B_1$ types in linear orderings.  Every $B_1$-type $p(\bar{u})$ is determined uniquely by the sizes of the intervals to the left of the first element, between successive elements, and to the right of the last element.  Thus, we can define a computable enumeration $R$ of all $B_1$-types realized in linear orderings so that from the index $i$ of the $B_1$-type $R_i$, we can effectively obtain the sizes of the intervals.
	
	Let $p(u_1,u_2)$ be a $B_1$-type in which the interval $(u_1,u_2)$ is infinite.  We consider $B_1$-types $q(u_1,u_2,x)$, with $u_1 < x < u_2$.  To understand which of these are generated by formulas from $p(u_1,u_2)$ and existential formulas, it is helpful to consider the following cases.  
	
	\bigskip
	\noindent
	\textbf{Case 1}:   Let $q(u_1,u_2,x)$ be a $B_1$-type such that the interval $(u_1,x)$ is finite, of size $k$, and the interval $(x,u_2)$ is infinite.  Let $t(u_1,u_2)$ be a complete type saying that $u_1$ and $u_2$ are infinitely far apart and $u_1$ belongs to a maximal discrete interval of size less than $k$.
	Clearly, $p(u_1,u_2)$ is consistent with $t(u_1,u_2)$, whereas $q(u_1,u_2,x)$ is not.
	By Lemma~\ref{prop:existential-types}, $q(u_1,u_2,x)$ is not generated by $p(u_1,u_2)$ and existential formulas.
	
	\bigskip
	\noindent
	\textbf{Case 2}:  Let $q(u_1,u_2,x)$ extend $p(u_1,u_2)$ such that $u_1 < x < u_2$, and the intervals $(u_1,x)$ and $(x,u_2)$ are both infinite.  Then $q(u_1,u_2,x)$ is generated by $p(u_1,u_2)$ and the infinite set of existential formulas saying that for each $n$, there are at least $n$ elements in the intervals $(u_1,x)$ and $(x,u_2)$.
	
	\begin{prop}
		Let $\mathcal{A}$ be a linear ordering such that every infinite interval can be split into two infinite parts.
		Then $\mathcal{A}$ is weakly $1$-saturated.
	\end{prop}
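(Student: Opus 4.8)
The plan is to use the fact, recorded just above, that a $B_1$-type in a linear ordering is determined by its interval sizes, and to reduce the verification to the two cases already isolated. I would fix $\bar{a}$ realizing $p(\bar{u})$; reordering if necessary, assume $a_1 < \cdots < a_n$, and let $q(\bar{u},x)$ be a $B_1$-type generated by the formulas of $p(\bar{u})$ and existential formulas. The type $q$ locates $x$ either at one of the $u_i$, in which case $b = a_i$ realizes $q(\bar{a},x)$ at once, or strictly inside one of the $n+1$ intervals determined by $\bar{a}$, namely the bounded intervals $(a_i,a_{i+1})$ together with the end intervals $(-\infty,a_1)$ and $(a_n,+\infty)$ (here I read ``interval'' so as to include these two unbounded ones). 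Since $q \supseteq p$, the only new information carried by $q$ is the pair of sizes into which $x$ splits the chosen interval.

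If $x$ lands in a finite interval, its size is already fixed by $p$, the splitting sizes prescribed by $q$ are therefore forced, and the witness is simply the correctly numbered element of that interval, which exists because $\bar{a}$ realizes $p$. Hence the substance lies in the case that $x$ splits an infinite interval $I$. Here I would first show that, since $q$ is generated, both halves of $I$ must be infinite. This is the Case 1 phenomenon: were one half finite, of size $k$, I would produce a complete type $t(\bar{u}) \supseteq p(\bar{u})$ that is consistent with $p$ but inconsistent with $q$, whence Lemma~\ref{prop:existential-types} would contradict the assumption that $q$ is generated. For a bounded interval $(a_i,a_{i+1})$ this $t$ is the one from Case 1, asserting that the relevant endpoint lies in a maximal discrete block of size less than $k$; for an end interval it is a type asserting that the corresponding tail is densely ordered, so that no element sits exactly $k$ steps from $a_1$ (resp.\ $a_n$).

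It then remains to realize a generated $q$ that splits an infinite interval $I$ into two infinite parts; this is Case 2, and it is precisely where the hypothesis is used. By assumption $I$ can be split into two infinite parts, so there is $b \in I$ with both resulting pieces infinite. Because a $B_1$-type records each interval only as ``finite of size $m$'' or ``infinite,'' the tuple $(\bar{a},b)$ realizes exactly $q(\bar{a},x)$, and weak $1$-saturation follows. I expect the one genuinely delicate point to be the argument of the preceding paragraph that a generated $q$ cannot leave one side of an infinite interval finite, and in particular treating the unbounded end intervals by the same mechanism as the bounded ones; once the appropriate auxiliary complete type is fed into Lemma~\ref{prop:existential-types}, the remainder is routine bookkeeping with interval sizes.
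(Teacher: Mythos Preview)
Your proposal is correct and follows essentially the same approach as the paper: split into the cases where $x$ lands in a finite interval (where the type is automatically realized) versus an infinite interval (where one invokes the Case~1 analysis via Lemma~\ref{prop:existential-types} to rule out a finite side, and then the hypothesis on $\mathcal{A}$ to realize the infinite--infinite split of Case~2). You are in fact slightly more careful than the paper in explicitly treating the degenerate case $x = u_i$ and in noting that the unbounded end intervals require their own auxiliary complete type in the Case~1 argument, whereas the paper's proof leaves these points implicit.
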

	\begin{proof}
		For a tuple $\bar{a}$, we consider the possible $B_1$-types $q(\bar{a},x)$.  First, suppose $q(\bar{a},x)$ locates $x$ in a finite interval $(-\infty,a_0)$, $(a_i,a_{i+1})$, or $(a_n,\infty)$ so that the sizes of the two subintervals to the left and right of $x$ add up properly.  Then $q(\bar{u},x)$ is generated by formulas of $p(\bar{u})$ and existential formulas saying that the subintervals have at least the desired size, and $q(\bar{a},x)$ must be realized.  Next, suppose $q(\bar{a},x)$ locates $x$ in an infinite interval $(-\infty,a_0)$, $(a_i,a_{i+1})$, or $(a_n,\infty)$.  If $q(\bar{a},x)$ is generated by formulas of $p(\bar{u})$ and existential formulas, then $x$ must split the interval into two infinite parts.  The ordering $\mathcal{A}$ has exactly this feature.
	\end{proof}
	
	Here is the simpler of the two results on linear orderings.  
	
	\begin{thm} 
		\label{OrderingsBound}
		
		Let $\mathcal{A}$ be a linear ordering such that each element lies on a maximal discrete set that is finite.  Suppose there is a finite bound  on the sizes of these sets.  Then $\mathcal{A}$ admits strong jump inversion.  Moreover, if $\mathcal{A}$ is low over $X$, then there is an $X$-computable copy with an isomorphism that is $\Delta^0_2$ relative to $X$.  
		
	\end{thm}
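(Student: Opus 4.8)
The plan is to derive the theorem directly from Theorem~\ref{simple.general}, by verifying its three hypotheses and checking that Condition~(3) in fact holds in the strong form of Remark~\ref{SimpleCondition3}; this will yield both strong jump inversion and the stated $\Delta^0_2$ bound on the isomorphism. If $\mathcal{A}$ is finite there is nothing to prove, so I would assume $\mathcal{A}$ is infinite. Since every maximal discrete set (block) is finite, $\mathcal{A}$ then has infinitely many blocks.

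For Condition~(1) I would simply use the computable enumeration $R$ of the $B_1$-types of linear orderings already described above, each type being coded by the tuple of interval sizes (values in $\omega\cup\{\infty\}$) that determines it; this certainly includes every type realized in $\mathcal{A}$. The heart of the argument is Condition~(2). The key structural observation is that finiteness of the blocks forces the condensation of $\mathcal{A}$ (the quotient by the relation ``finitely many elements apart'') to be dense: two distinct blocks can never be adjacent, since otherwise the top of one and the bottom of the next would be successor-related and hence lie in a common block. Consequently, between any two blocks there is a third, so any infinite interval $(a,b)$ contains infinitely many blocks and can be split by choosing $x$ in a block with infinitely many blocks between it and $a$ and infinitely many between it and $b$; the same reasoning, together with the fact that an infinite end-interval again contains infinitely many blocks, handles $(-\infty,a_0)$ and $(a_n,\infty)$. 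Thus every infinite interval of $\mathcal{A}$ splits into two infinite parts. By the proposition above to that effect, $\mathcal{A}$ is weakly $1$-saturated, and by the earlier proposition that weak $1$-saturation implies effective type completion, Condition~(2) holds.

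For Condition~(3) I would argue that the only noncomputable feature of the $B_1$-type of a tuple is the finiteness of its intervals. Given any copy $\mathcal{C}\cong\mathcal{A}$, the predicate ``the interval between two named elements is finite'' is $\Sigma^0_1$ relative to $D(\mathcal{C})$, and when an interval is finite its exact size is computable from $D(\mathcal{C})$; hence $D(\mathcal{C})'$ uniformly computes the full interval-size profile of every tuple, i.e.\ an $R$-labeling of $\mathcal{C}$. This is precisely the strong form of Condition~(3) from Remark~\ref{SimpleCondition3}, so $\mathcal{A}$ admits strong jump inversion. Moreover, when $\mathcal{A}$ is low over $X$ we have $D(\mathcal{A})'\leq_T X'$, so the $R$-labeling of $\mathcal{A}$ is $X'$-computable; the ``Moreover'' clause of Theorem~\ref{simple.general} then produces an $X$-computable copy $\mathcal{B}$ together with an isomorphism $\mathcal{B}\to\mathcal{A}$ that is $X'$-computable, i.e.\ $\Delta^0_2$ relative to $X$, as required.

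The step I expect to be the main obstacle is the structural observation used for Condition~(2): one must see that finite blocks rule out adjacent blocks and hence force a dense condensation, and then verify uniformly---including for the two end-intervals---that an infinite interval really can be split into two infinite parts. Once this is in place, the remaining conditions are bookkeeping: the enumeration of $B_1$-types for linear orderings is standard, and the reduction of Condition~(3) to deciding a $\Sigma^0_1$ finiteness question is immediate. I note that the argument in fact uses only that each block is finite; the uniform bound on block sizes is given by hypothesis and makes each finite interval between consecutive tuple elements boundedly small, which keeps the type enumeration and the labeling transparently uniform.
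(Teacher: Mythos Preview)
Your verification of Conditions~(1) and~(2) is fine and matches the paper's approach; the density-of-the-condensation argument for splitting infinite intervals is exactly what is needed to invoke the weak $1$-saturation proposition.

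The gap is in Condition~(3), and it is precisely where the uniform bound $N$ is used. Your claim that ``the interval between two named elements is finite'' is $\Sigma^0_1$ in $D(\mathcal{C})$ is false: finiteness of an interval is $\Sigma^0_2$ (``there exists $n$ such that no $(n{+}1)$-st element exists''), and infiniteness is $\Pi^0_2$. Likewise, even when an interval is finite, its exact size is not computable from $D(\mathcal{C})$ alone---you never see that you have listed all of its elements. What the bound $N$ buys is the dichotomy: a finite interval between two tuple points lies inside a single block and hence has size at most $N$, while an infinite one has size greater than $N$. Thus ``$(a,b)$ is infinite'' becomes the $\Sigma^0_1$ statement ``$(a,b)$ has at least $N{+}1$ elements'', and for each $k\le N$ the statement ``$(a,b)$ has size exactly $k$'' is $\Delta^0_2$. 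This is how the paper gets a $\Delta^0_2$ (in $X$) $R$-labeling of $\mathcal{A}$ itself, and hence the $\Delta^0_2$ isomorphism via Theorem~\ref{simple.general}.

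Your closing remark that ``the argument in fact uses only that each block is finite'' is therefore not correct: without the bound, the strong form of Condition~(3) fails, and indeed the very next theorem in the paper (where there is no bound on block sizes) has to build a separate labeled copy and only obtains a $\Delta^0_3$ isomorphism.
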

	
	\begin{proof}
		
		Let $N$ be the finite bound on the sizes of the maximal discrete sets.  It is $\Delta^0_2$ relative to $\mathcal{A}$ to say that the interval $(a,b)$ has size $n$ for some fixed $n$.  It is $\Sigma^0_1$ relative to $\mathcal{A}$ to say that the interval is infinite---we just ask whether the interval has size greater than $N$.
		
		Suppose that $\mathcal{A}$ is low over $X$.  We can apply a procedure that is $\Delta^0_2$ relative to $X$ to assign an $R$-index to the type of any tuple $\bar{a} = (a_1,\ldots,a_n)$.  Any of the intervals 
		$(-\infty,a_1)$, $(a_n,\infty)$ and $(a_i,a_{i+1})$ is infinite if it has size greater than $N$. Using a procedure that is $\Delta^0_2$ relative to $X$, we can determine whether the size is $k$, for $k\leq N$.  We have an $R$-labeling of $\mathcal{A}$ that is $\Delta^0_2$ relative to $X$.  Then Theorem~\ref{simple.general} gives an $X$-computable copy with an isomorphism that is $\Delta^0_2$ relative to $X$.
	\end{proof}
	
	The next result, Theorem \ref{OrderingsNoBound}, is more complicated.  Before we state the result, we review some well-known, basic concepts about linear orderings.  Recall the \emph{block equivalence relation} $\sim$ on a linear ordering $\mathcal{A}$, where $a \sim b$ iff $[a, b]$ is finite.  For any linear ordering $\mathcal{A}$, each equivalence class under this relation is an interval that is either finite or of order type $\omega, \omega^*,$ or $\zeta = \omega^* + \omega$.  Furthermore, the quotient structure $\mathcal{A}/_{\sim}$ is itself a linear ordering, where each distinct point represents an equivalence class under $\sim$.      
	
	In Theorem \ref{OrderingsNoBound}, for a given $\mathcal{A}$ that is low over $X$, it is not clear that 
	$\mathcal{A}$ itself has an $R$-labeling that is $\Delta^0_2$ relative to $X$.  However, we can build a copy $\mathcal{B}$ with such an $R$-labeling.  We write $\eta$ for the order type of the rationals.         
	
	\begin{thm} 
		\label{OrderingsNoBound}
		
		Let $\mathcal{A}$ be a linear ordering for which the quotient $\mathcal{A}/_{\sim}$ has order type 
		$\eta$.  Suppose also that in $\mathcal{A}$, every infinite interval has arbitrarily large finite successor chains.  Then $\mathcal{A}$ admits strong jump inversion.  Moreover, if $\mathcal{A}$ is low over $X$, then there is an $X$-computable copy $\mathcal{B}$ with an isomorphism that is $\Delta^0_3$ over $X$ from $\mathcal{A}$ to $\mathcal{B}$.  
		
	\end{thm}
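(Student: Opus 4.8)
The plan is to verify the three hypotheses of Theorem~\ref{simple.general} and track complexities. Condition (1) I would handle immediately, using the same computable enumeration $R$ of the $B_1$-types of linear orderings described above, from whose indices one reads off the sizes of all the intervals determined by a tuple. Condition (2) follows from weak $1$-saturation. Indeed, since $\mathcal{A}/_{\sim}$ has order type $\eta$, between any two distinct blocks there are infinitely many blocks, and there are infinitely many blocks to the left and to the right of any block. Consequently every infinite interval of $\mathcal{A}$ splits into two infinite parts: a bounded infinite interval $(a,b)$ has $a \not\sim b$, so infinitely many blocks lie strictly between $a$ and $b$ and a point in a middle block splits it, and the unbounded cases are similar. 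By the proposition preceding Theorem~\ref{OrderingsBound}, $\mathcal{A}$ is then weakly $1$-saturated, hence satisfies effective type completion. All of the work therefore goes into Condition (3) and the complexity of the isomorphism.

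The obstacle in Condition (3) is that labeling a tuple $\bar a$ requires, for each interval determined by $\bar a$, either its exact finite size or the information that it is infinite. In a copy that is low over $X$, the statement that $(a,b)$ has size exactly $k$ is $\Delta^0_2$ relative to $X$, but the statement that $(a,b)$ is infinite is only $\Pi^0_2$ relative to $X$, so the $R$-labeling of $\mathcal{A}$ in its given presentation is in general merely $\Delta^0_3$ relative to $X$. This is too weak to feed into Theorem~\ref{simple.general}, which is why, as noted before the statement, I would not label $\mathcal{A}$ itself but instead build an auxiliary copy $\mathcal{C}$ carrying an $R$-labeling that is $\Delta^0_2$ relative to $X$, i.e. $X'$-computable.

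The heart of the argument is the construction of this $\mathcal{C}$. Working with oracle $X'$, I would build $\mathcal{C}$ as a dense (quotient-$\eta$) sum of blocks, laying down finite successor chains whose sizes are read directly off $\mathcal{A}$—an $X'$-computable task, since exact finite block sizes are $\Delta^0_2$ relative to $X$—and growing designated blocks explicitly to the infinite order types $\omega$, $\omega^*$, and $\zeta$. Because the construction itself dictates the block structure of $\mathcal{C}$, the induced $R$-labeling is $X'$-computable with no direct query to the finite/infinite distinction in $\mathcal{A}$. The role of the second hypothesis—that every infinite interval of $\mathcal{A}$ contains arbitrarily large finite successor chains—is to force $\mathcal{C}$ to be genuinely isomorphic to $\mathcal{A}$: this richness, together with the density of the quotient, yields a back-and-forth system, the ``forth'' steps using weak $1$-saturation to place elements into $\mathcal{C}$ and the ``back'' steps using the abundance of large finite chains in $\mathcal{A}$. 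I expect the main obstacle to be exactly this verification: showing that the built copy realizes the isomorphism type of $\mathcal{A}$ while keeping both $\mathcal{C}$ and its labeling $X'$-computable, so that the finite/infinite distinction is never probed in $\mathcal{A}$ but only manufactured on the $\mathcal{C}$ side.

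Finally I would assemble the pieces. The back-and-forth isomorphism $g\colon \mathcal{A}\to\mathcal{C}$ needs $X''$ to see the block relation $\sim$ and the block types of $\mathcal{A}$, so $g$ is $\Delta^0_3$ over $X$. Applying Theorem~\ref{simple.general} to the copy $\mathcal{C}$ with its $X'$-computable $R$-labeling produces an $X$-computable copy $\mathcal{B}$ together with an isomorphism $h\colon \mathcal{B}\to\mathcal{C}$ that is $X'$-computable, hence $\Delta^0_2$ over $X$. The composition $h^{-1}\circ g\colon \mathcal{A}\to\mathcal{B}$ is then $\Delta^0_3$ over $X$, which gives the stated bound on the isomorphism; in particular $\mathcal{A}$ admits strong jump inversion.
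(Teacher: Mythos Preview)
Your overall architecture matches the paper's: verify Conditions (1) and (2) of Theorem~\ref{simple.general} exactly as you do, reduce everything to producing an intermediate copy with an $X'$-computable $R$-labeling and a $\Delta^0_3$ isomorphism back to $\mathcal{A}$, and then compose with the $\Delta^0_2$ isomorphism that Theorem~\ref{simple.general} provides. The final assembly paragraph is correct.

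The gap is in the middle step, the construction of $\mathcal{C}$. Your description ``laying down finite successor chains whose sizes are read directly off $\mathcal{A}$ \ldots\ and growing designated blocks explicitly to the infinite order types'' presupposes that $X'$ can tell, for each block of $\mathcal{A}$, whether it is finite (and of what size) or infinite (and of what shape). It cannot: ``the block of $a$ has size exactly $n$'' is indeed $\Delta^0_2$ over $X$ for each fixed $n$, but ``the block of $a$ is infinite'' is only $\Pi^0_2$ over $X$, so $X'$ has no way to decide which alternative holds and hence no way to decide which blocks to grow. Since orderings satisfying the hypotheses are not all isomorphic (one may have a $\zeta$-block where another has none, for instance), you cannot build $\mathcal{C}$ in isolation from $\mathcal{A}$ and then argue isomorphism after the fact; the construction of $\mathcal{C}$ must already track $\mathcal{A}$'s block structure, and that tracking is exactly what $X'$ alone cannot do.

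The paper resolves this by \emph{not} separating the construction of the copy from the construction of the isomorphism. Using a $\Delta^0_2$ oracle, it builds the labeled copy $\mathcal{B}$ and a finite approximation $f_s$ to the isomorphism simultaneously, by a priority argument; the labels on $\mathcal{B}$ are committed to permanently, while $f_s$ is allowed to change and only stabilizes in the limit (hence $\Delta^0_3$). The key technical device is a ``buffer pair'' trick for the requirements that place a $\mathcal{B}$-element $b$ (sitting in an interval labeled $\infty$) into an apparently infinite interval $(c,c')$ of $\mathcal{A}$: rather than picking an image $a$ directly, one first finds $z,z'$ with $(c,z)$, $(z,z')$, $(z',c')$ all currently appearing infinite, and only then searches inside $(z,z')$ for an $a$ carrying a sufficiently long successor chain. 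The second hypothesis on $\mathcal{A}$ guarantees such an $a$ exists once the buffer pair is correct, and the buffer insulates the choice of $a$ from later discoveries that elements near $c$ or $c'$ were actually in finite blocks. This is the mechanism your sketch is missing.
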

	
	\begin{proof}
		
		As in the previous result, let $R$ be a computable enumeration of all $B_1$-types realized in linear orderings, such that from the index $i$ of the type $R_i$, we can compute the sizes, including 
		$\infty$, of the intervals.  Also, as in the previous result, every infinite interval in $\mathcal{A}$ has an element that splits the interval into two infinite parts.  This implies that $\mathcal{A}$ is weakly $1$-saturated.  Suppose $\mathcal{A}$ is low over $X$.  We will prove the following.
		
		\begin{lem}
			
			There is a copy $\mathcal{B}$ of $\mathcal{A}$ with an $R$-labeling that is $\Delta^0_2$ over $X$.  Moreover, there is an isomorphism $f$ from $\mathcal{B}$ to $\mathcal{A}$ such that $f$ is $\Delta^0_3$ relative to $X$.    
			
		\end{lem}
		
		Assuming the lemma, we complete the proof of Theorem~\ref{OrderingsNoBound} as follows.  Given 
		$\mathcal{A}$, low over $X$, the lemma gives a copy $\mathcal{B}$ with an $R$-labeling that is 
		$\Delta^0_2$ relative to $X$, and an isomorphism $f$ from $\mathcal{B}$ to $\mathcal{A}$ that is 
		$\Delta^0_3$ relative to $X$.  By Theorem~\ref{simple.general}, there is an $X$-computable copy 
		$\mathcal{C}$ with an isomorphism $g$ from $\mathcal{C}$ to $\mathcal{B}$ that is $\Delta^0_2$ relative to $X$.  Then $f\circ g$ is an isomorphism from $\mathcal{C}$ to $\mathcal{A}$ that is $\Delta^0_3$ relative to $X$.

		\begin{proof} [Proof of Lemma]
			
			For simplicity, we suppose that $\mathcal{A}$ is low.  We build a $\Delta^0_2$ copy $\mathcal{B}$, along with some labels for sizes of intervals and a $\Delta^0_3$ isomorphism $f$.  We suppose that the universe of $\mathcal{A}$ is $\omega$.  The copy $\mathcal{B}$, also with universe $\omega$, will have the intervals labeled by size.  Throughout, we use the oracle $\Delta^0_2$.  Suppose $\mathcal{A}_n$ is the true ordering on the first $n$ elements of $\mathcal{A}$, with the intervals correctly labeled by size.  At stage $s$, we construct (using the $\Delta^0_2$ oracle) an approximation $\mathcal{A}_{n,s}$ in which the intervals are either correctly labeled with a finite number at most $s$, or else carry the label $\infty$.  We have a finite sub-ordering $\mathcal{B}_s$ of $\mathcal{B}$ in which the intervals are labeled by size, once and for all.  
			
			We want an isomorphism $f$ from $\mathcal{B}$ onto $\mathcal{A}$.  We must satisfy the following requirements.
			
			\bigskip
			\noindent
			\textbf{$R_{2a}$}:  Put $a$ into $ran(f)$.
			
			\bigskip
			\noindent
			\textbf{$R_{2b+1}$}:  Put $b$ into $dom(f)$.
			
			\bigskip
			
		By the end of each stage $s$, we have a finite function $f_s$ that seems to satisfy the first few requirements, so that our current labels on the intervals with endpoints in $ran(f_s)$ match the labels on the corresponding intervals in $dom(f_s)$.  Moreover, we ensure that if $f_s(b) = a$, then for any successor chain around $b$ in $\mathcal{B}_s$, we also have seen, by stage $s$, a corresponding successor chain around $a$ in $\mathcal{A}$.
		
		An interval that seemed infinite at stage $s$ may be seen to be finite at stage $s+1$.  So in defining $f_{s+1}$, we first determine the largest initial segment of $f_s$ (in terms of priority requirements) that can be preserved.  Consider the highest priority requirement that now must be satisfied.          
			
			Suppose the next requirement to be satisfied is to put $a$ into $ran(f)$.  We have no problem finding an appropriate pre-image $b$
			and assigning the appropriate sizes to the intervals having $b$ as an endpoint.
			
			Suppose the next requirement to be satisfied is to put $b$ into $dom(f)$.  In the interesting sub-case, $b$ lies in an interval $(d,d')$, where $(d,b)$ and $(b,d')$ are both labeled infinite, $f_s(d) = c$ and $f_s(d') = c'$, where $(c,c')$ appears to be infinite.  We need to define $a = f_{s+1}(b)$ such that $(c,a)$ and $(a,c')$ both appear infinite, and whatever successor chain surrounding $b$ is matched by one surrounding $a$.  The naive strategy is to just look for $a$.  This strategy may not work.  Believing that we have found $a$, and seeing that $a$ lies in a finite interval inside $(c,c')$, we may create a bigger successor chain around $b$, inside $(d,d')$.  Eventually, we may discover that the interval $(c,a)$ or $(a, c')$ is finite.  Now, we cannot map $b$ to $a$.  Moreover, we have made the search for $f(b)$ more difficult in that it must lie in a larger finite interval matching the one we have created around $b$.  This can keep happening.  Our current guess at the appropriate $a = f(b)$ may keep attaching itself to a successor chain around $c$ or $c'$.    
			
			We need a better strategy.  Instead of trying to define $a = f_{s+1}(b)$ immediately, we identify the first (relative to the standard ordering on pairs of the universe $\omega$ of $\mathcal{A}$) ``buffer pair'' $(z,z')$ such that $(c,z)$, $(z,z')$ and $(z',c')$ all appear infinite in $\mathcal{A}$.  Once we find such a $(z,z')$, then we search within $(z, z')$ for an element $a$ and a successor chain around it sufficient to match whatever one we may have created around $b$; we define $f_{s+1}(b) = a$.  Assuming the interval $(c, c')$ is correctly labeled as infinite, then, at some stage, we will settle on the first correct buffer pair $(z,z')$, i.e., one such that $(c,z)$, $(z,z')$ and $(z',c')$ all are really infinite in $\mathcal{A}$.  Then, applying the hypothesis about $\mathcal{A}$, we are guaranteed to find in $(z, z')$ an element $a$  with a finite interval around it large enough to correspond to whatever one we may have built around $b$ by this stage.  (Recall that, in general, when we map $b$ to $a$ for some requirement, we vow not to locate $b$ in a finite interval larger than the one we have seen around $a$.)  Following this procedure, we can eventually satisfy all requirements.                                     
		\end{proof}
	\end{proof}
	
	\subsection{Boolean algebras}
	\label{Boolean algebras}
	
	As we mentioned in the introduction, Downey and Jockusch \cite{DJ} showed that every low Boolean algebra has a computable copy.  In \cite{KS}, it is shown that for a low Boolean algebra $\mathcal{A}$, there is a computable copy $\mathcal{B}$ with a $\Delta^0_4$ isomorphism.  In unpublished work, the second author proved that this is best possible, in the sense that there is a low Boolean algebra with no $\Delta^0_3$ isomorphism taking $\mathcal{A}$ to a computable copy $\mathcal{B}$. 
	
	For every element $a$ in the Boolean algebra $\mathcal{B}$, we say that $a$ \emph{has size $n$} if it is the join of $n$ atoms of $\mathcal{B}$. If $a$ is not the join of finitely many atoms of $\mathcal{B}$, then we say that $a$ has \emph{infinite} size.  Here we consider Boolean algebras with no $1$-atoms, which means that every infinite element splits into two infinite elements.  
	To describe the $B_1$-type of a tuple $\bar{a}$ in $\mathcal{B}$, we consider the finite sub-algebra of $\mathcal{B}$ generated by $\bar{a}$.  Note that an atom in this finite sub-algebra is not necessarily an atom of $\mathcal{B}$.  It is easy to see that for a tuple $\bar{a}$ in $\mathcal{B}$, the $B_1$-type of $\bar{a}$ is uniquely determined by the sizes in $\mathcal{B}$ of the atoms in the finite sub-algebra generated by $\bar{a}$.  Thus, we can define a computable enumeration $R$ of all $B_1$-types realized in Boolean algebras so that from the index $i$ of the $B_1$-type $R_i$ we can effectively obtain the sizes of the atoms in the sub-algebra generated by a tuple that satisfies this $B_1$-type.
	
	Let $p(u)$ be a $B_1$-type saying that $u$ is infinite.  We need to know which $B_1$-types $q(u,x)$ are generated by $p(u)$ and existential formulas.  We have two interesting cases.
	
	\bigskip
	\noindent
	\textbf{Case 1}:  Let $q(u,x)$ be the $B_1$-type extending $p(u)$ in which $x$ splits $u$ into one finite element, say of size $k$, and one infinite element.  Let $t(u)$ be a complete type saying that $u$ has infinite size, but there are fewer than $k$ atoms below it.  Clearly, $p(u)$ is consistent with $t(u)$, whereas $q(u,x)$ is not.  By Lemma~\ref{prop:existential-types}, it follows that $q(u,x)$ is not generated by $p(u)$ and existential formulas.
	
	\bigskip
	\noindent
	\textbf{Case 2}:  Let $q(u,x)$ be the $B_1$-type extending $p(u)$ in which $x$ splits $u$ into two elements of infinite sizes.  Then $q(u,x)$ is generated by $p(u)$ and the infinite set of existential formulas saying that there are at least $n$ distinct elements below $x$ and $u\setminus x$,
	for every $n$.
	
	\bigskip
	
	The proof of the following is then straightforward.
	
	\begin{lem}
		
		If $\mathcal{A}$ is a Boolean algebra with no $1$-atoms, then $\mathcal{A}$ is weakly $1$-saturated.
		
	\end{lem}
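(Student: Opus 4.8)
The plan is to mirror the weak $1$-saturation argument given above for linear orderings, now using the fact that the $B_1$-type of a tuple in a Boolean algebra is determined by the sizes of the atoms of the generated finite sub-algebra, together with the two cases analyzed immediately before the lemma.

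First I would fix a tuple $\bar{a}$ realizing the $B_1$-type $p(\bar{u})$ and let $c_1,\dots,c_m$ be the atoms of the finite sub-algebra generated by $\bar{a}$, each carrying a finite or infinite size recorded in $p$. A $B_1$-type $q(\bar{u},x)$ extending $p(\bar{u})$ is then pinned down by the sizes it assigns to the two pieces $c_i\wedge x$ and $c_i\wedge\neg x$ for each $i$ (either piece possibly empty). So to realize $q(\bar{a},x)$ it suffices to choose, atom by atom, a piece $x_i\leq c_i$ of the prescribed size and put $x=\bigvee_i x_i$.

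Next I would handle the atoms individually. For a finite atom $c_i$ of size $k$, the type prescribes sizes $j$ and $k-j$ for the two pieces, and since the $k$ atoms of $\mathcal{A}$ below $c_i$ are genuine atoms I may take $x_i$ to be a join of $j$ of them; this piece is realizable with no hypothesis on $\mathcal{A}$. For an infinite atom $c_i$ I would invoke the assumption that $q$ is generated by formulas of $p$ and existential formulas. By Lemma~\ref{prop:existential-types}, applying the argument of Case~1 to each piece $c_i\wedge x$ and $c_i\wedge\neg x$ in turn rules out any split of $c_i$ into one nonempty finite part and one infinite part. Hence the prescribed split of each infinite $c_i$ is exactly one of three kinds: all of $c_i$ in $x$ (take $x_i=c_i$), none of $c_i$ in $x$ (take $x_i=0$), or both pieces infinite.

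The hypothesis enters only in the last kind, which is Case~2: there I need $x_i\leq c_i$ with both $x_i$ and $c_i\wedge\neg x_i$ infinite, and this is available precisely because $\mathcal{A}$ has no $1$-atoms, so the infinite element $c_i$ splits into two infinite elements. Setting $x=\bigvee_i x_i$ then produces an element realizing $q(\bar{a},x)$, establishing weak $1$-saturation. I do not expect a serious obstacle here; the only point meriting care is the completeness of the trichotomy for infinite atoms, i.e.\ confirming that Case~1 and its reflection are the sole splits excluded by the existential-generation hypothesis, which the discussion preceding the lemma already supplies.
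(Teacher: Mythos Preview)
Your proposal is correct and is precisely the argument the paper intends: it says only that the proof is ``straightforward'' after laying out Cases~1 and~2, and your atom-by-atom split---using Case~1 (and its reflection) to force the trichotomy on infinite atoms and the no-$1$-atom hypothesis to realize the infinite/infinite split---is exactly the expected elaboration, parallel to the linear-ordering case already worked out in the paper.
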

	
	\begin{prop}
		
		Suppose $\mathcal{A}$ is an infinite Boolean algebra with no $1$-atoms.  Then $\mathcal{A}$ admits strong jump inversion.  Moreover, if $\mathcal{A}$ is low over $X$, there is an $X$-computable copy $\mathcal{B}$ with an isomorphism that is $\Delta^0_3$ relative to~$X$.
		
	\end{prop}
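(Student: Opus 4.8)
The plan is to deduce the Proposition from Theorem~\ref{simple.general}, following the pattern of Theorem~\ref{OrderingsNoBound}. Conditions~(1) and~(2) are already available: the enumeration $R$ of all $B_1$-types realized in Boolean algebras, indexed so that from $i$ we recover the sizes of the atoms of the generated finite subalgebra, is the computable enumeration described above, and the preceding Lemma shows that $\mathcal{A}$, having no $1$-atoms, is weakly $1$-saturated, so by the Proposition relating weak $1$-saturation to effective type completion, $\mathcal{A}$ satisfies effective type completion. The work therefore lies in Condition~(3) and in the stated complexity bound. As in Theorem~\ref{OrderingsNoBound}, Condition~(3) cannot hold in the strong form of Remark~\ref{SimpleCondition3}: deciding that a subalgebra atom has a given finite size $n$ is $\Delta^0_2$ relative to $\mathcal{A}$ (one asks whether there are $n$ but not $n+1$ pairwise disjoint nonzero elements below it), whereas deciding that it has size $\infty$ is only $\Pi^0_2$ relative to $\mathcal{A}$, so $D(\mathcal{A})'$ need not compute an $R$-labeling of $\mathcal{A}$ itself. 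I would therefore prove the analogue of the Lemma in Theorem~\ref{OrderingsNoBound}: for $\mathcal{A}$ low over $X$, build a copy $\mathcal{B}$ with an $R$-labeling that is $\Delta^0_2$ relative to $X$ and an isomorphism $f\colon\mathcal{B}\to\mathcal{A}$ that is $\Delta^0_3$ relative to $X$. Theorem~\ref{simple.general} then yields an $X$-computable copy $\mathcal{C}$ with an isomorphism $g\colon\mathcal{C}\to\mathcal{B}$ that is $\Delta^0_2$ relative to $X$, and $f\circ g$ is the desired $\Delta^0_3$ isomorphism from $\mathcal{C}$ to $\mathcal{A}$.

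For the construction of $\mathcal{B}$, assuming for simplicity that $\mathcal{A}$ is low, I would work with the oracle $\emptyset'$ and build $\mathcal{B}$ as a $\Delta^0_2$ Boolean algebra whose subalgebra atoms are labeled by size once and for all. Since ``size $\geq n$'' is $\Sigma^0_1$ relative to $\mathcal{A}$, it is decidable from $\emptyset'$; at stage $s$ we treat an element as infinite while it still appears to have size $\geq s$, so a genuinely infinite element always appears infinite and a finite one is eventually pinned to its exact size. A crucial simplification is that a $B_1$-type records only the sizes $0,1,2,\dots,\infty$ of the subalgebra atoms, and no existential formula can detect an individual atom lying beneath an infinite element; hence, inside an element labeled $\infty$, the placement of actual atoms is invisible to the labeling. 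This removes the finite-matching complication that, for linear orderings, forced the extra hypothesis on successor chains, and is why only ``no $1$-atoms'' is needed here. With requirements $R_{2a}$ (put $a$ into $ran(f)$) and $R_{2b+1}$ (put $b$ into $dom(f)$), the map $f$ is revised across stages while the committed labels of $\mathcal{B}$ are never revised: when a piece that looked infinite is later seen to be finite, we re-route $f$ so that each $\mathcal{B}$-piece is paired with a genuinely correct $\mathcal{A}$-piece of the same size, using that an infinite Boolean algebra with no $1$-atoms has infinitely many genuinely infinite elements.

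The range requirement $R_{2a}$ is routine: pick a preimage $b$, mirror the currently visible structure of $\mathcal{A}$ around $a$ into the permanent labels around $b$, and leave the correction of any premature $\infty$-guess to later re-routing of $f$. The heart of the argument, and the main obstacle, is the domain requirement $R_{2b+1}$ in the interesting case, where $b$ splits a $\mathcal{B}$-element into two pieces both labeled infinite while the corresponding element $c$ of $\mathcal{A}$ merely appears infinite. A direct search for $a$ with $c\wedge a$ and $c\wedge\neg a$ both infinite is unreliable, since a half that looks infinite may later be seen finite, forcing endless redefinition. I would adapt the buffer device of Theorem~\ref{OrderingsNoBound}: wait for the first decomposition $c=w_1\vee w_2\vee w_3$ into three pairwise disjoint pieces that all appear infinite, eventually stabilizing on the first one for which $w_1,w_2,w_3$ are all genuinely infinite; such a decomposition exists because, having no $1$-atoms, $\mathcal{A}$ splits every infinite element into arbitrarily many infinite pieces. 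With this fixed, choose any $a$ with $w_1\leq a\leq w_1\vee w_2$ (for instance $a=w_1$), so that $c\wedge a\geq w_1$ and $c\wedge\neg a\geq w_3$ are automatically infinite; since the two halves carry no finite constraint, this value of $f(b)$ is permanent for the requirement. Routine bookkeeping then shows that every requirement is eventually satisfied, that the labeling of $\mathcal{B}$ is correct and $\Delta^0_2$ relative to $X$, and that $f=\lim_s f_s$ is an isomorphism that is $\Delta^0_3$ relative to $X$, which proves the Lemma and hence the Proposition.
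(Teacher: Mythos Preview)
Your overall plan --- verify Conditions~(1)--(3) of Theorem~\ref{simple.general}, with the work concentrated in a Lemma producing an $R$-labeled copy $\mathcal{B}$ together with a $\Delta^0_3$ isomorphism to $\mathcal{A}$, then compose with the $\Delta^0_2$ isomorphism supplied by Theorem~\ref{simple.general} --- is exactly the paper's plan. The gap is in your handling of $R_{2b+1}$, specifically the claim that ``the two halves carry no finite constraint.''

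That claim is false. When a higher-priority requirement is injured, the elements it contributed to $\mathcal{B}$ remain in $\mathcal{B}$ with their permanent labels but drop out of $\operatorname{dom}(f_s)$. Such an orphaned element $b'$ may lie strictly below one of your halves $\beta_1$ and carry a finite size label $k$. If you now set $f(\beta_1)=w_1$ for a merely infinite $w_1$, nothing prevents $w_1$ from being atomless, and then no later stage can map $b'$ to an element of size $k$ below $w_1$. Your observation that the $B_1$-type of the current tuple $\bar d$ cannot see atoms inside an infinite subalgebra-atom is correct but beside the point: the constraint comes from the larger tuple $\bar b\supseteq\bar d$ already committed in $\mathcal{B}$. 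So the Boolean-algebra case \emph{does} have a finite-matching problem, contrary to your diagnosis; it just takes a different form from the successor-chain problem in linear orderings.

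The paper resolves this by exploiting lowness more directly: since $\mathcal{A}$ is low, the atom relation on $\mathcal{A}$ is $\Delta^0_2$, so individual atoms are recognizable with the working oracle. The construction maintains an invariant (their condition~(3)) that whenever the permanent labels in $\mathcal{B}$ force at least $k$ atoms below some $d\in\operatorname{dom}(f_s)$, at least $k$ actual atoms of $\mathcal{A}$ have already been seen below $f_s(d)$. For $R_{2b+1}$ the paper does not use a three-piece buffer: it searches for the least $\alpha_1\leq\alpha$ with both $\alpha_1$ and $\alpha-\alpha_1$ currently appearing infinite, and then modifies $\alpha_1$ by adding or removing finitely many of the already-seen atoms below $\alpha$ so that each half meets its required atom count. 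Moving finitely many atoms preserves ``infinite'' on both sides, and the invariant guarantees enough atoms are available. Your buffer device, imported from Theorem~\ref{OrderingsNoBound}, is neither necessary nor, without this atom bookkeeping, sufficient.
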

	
	\begin{proof}
		
		We are assuming that $\mathcal{A}$ is low over $X$.  To show that there is an $X$-computable copy, it is enough to show the following.
		
		\begin{lem} 
			
			Let $\mathcal{A}$ be Boolean algebra with no $1$-atom.  If $\mathcal{A}$ is low over $X$, then $X'$ computes a 
			copy $\mathcal{B}$ with an $R$-labeling.  Moreover, there is an isomorphism $f$ from $\mathcal{B}$ to $\mathcal{A}$ that is $\Delta^0_3$ relative to $X$.
			
		\end{lem}
		
		\begin{proof}
			
			For simplicity, we suppose that $\mathcal{A}$ is low, and our entire construction uses a $\Delta^0_2$ oracle.  For notational convenience, when we write $\bar{a} \in \mathcal{A}$ or $\bar{b} \in \mathcal{B}$, we identify the tuple with the finite sub-algebra (of $\mathcal{A}$ or $\mathcal{B}$) determined by the tuple.  Since $\mathcal{A}$ is low, the atom relation on $\mathcal{A}$ is $\Delta^0_2$.  Since we will guess (using the $\Delta^0_2$ oracle) that an element of $\mathcal{A}$ is finite iff we recognize it as the join of finitely many atoms of $\mathcal{A}$, any such guess is correct.  Now at a particular stage $s$, our guess may incorrectly assign an $R$-label of infinite to a finite element $a$ of $\mathcal{A}$; however, there will be a stage $t$ where we correctly guess the $R$-label of $a$ from that stage onward.  For any truly infinite element $a$, we guess the $R$-label correctly at all stages.
			
			We must computably (relative to $\Delta^0_2$) construct $\mathcal{B}$ with an $R$-labeling and an isomorphism $f$ between $\mathcal{B}$ and $\mathcal{A}$ that is correct in the limit, so that 
			$f$ is~$\Delta^0_3$.  
			
			As usual, we have the following requirements.
			
			\bigskip
			\noindent
			\textbf{$R_{2a}$}:  $a\in ran(f)$
			
			\bigskip
			\noindent
			\textbf{$R_{2b+1}$}:  $b\in dom(f)$
			
			\bigskip
			
			At stage $s = 0$, we define $f(0_{\mathcal{B}}) = 0_{\mathcal{A}}$ and $f(1_{\mathcal{B}}) = 1_{\mathcal{A}}$; this will never change.  We guess that $1_{\mathcal{A}}$ is labeled with $\infty$ (this will never be wrong), and we label $1_\mathcal{B}$ with $\infty$.  
			
			Assume that by the end of stage $s$ we have defined $\bar{b} \in \mathcal{B}$ with $R$-labels and $f_s: \bar{d} \rightarrow \bar{c}$, where $\bar{d}$ is a subsequence of $\bar{b}$, so that the following hold:
			
			\begin{enumerate}
				
				\item the finite algebras $\bar{d}$ and $\bar{c}$ agree;
				
				\item if $f_s(d) = c$, then the $R$-label on $d$ matches the stage $s$ approximation of the $R$-label on $c$;
				
				\item if $f_s(d) = c$, and the finite $R$-labels among those we have assigned to $\mathcal{B}_s$ imply that there are at least $k$ atoms (of $\mathcal{B}$) below $d$, then by stage $s$, we have seen at least $k$ atoms below $c$. 
				
			\end{enumerate}  
			
			Stage $(s+1)$ approximations of $R$-labelings of $\mathcal{A}$ may reveal that an element in $\mathcal{A}$ with stage $s$ approximate $R$-label $\infty$ actually is finite.  So in defining $f_{s+1}$, we first determine the largest initial segment of $f_s$ (in terms of priority requirements) that can be preserved.  Consider the highest priority requirement that now must be satisfied.
			
			Suppose the next requirement to be satisfied is to put $a$ into $ran(f)$.  The element $a$ splits each atom $\alpha$ of the subalgebra $\bar{c}$ into $\alpha_1$ and $\alpha_2$, each of which has a stage $s+1$ approximation of its $R$-label.  If $f_{s}(\beta) = \alpha$, then $\beta$ can be split---using the other elements of $\bar{b}$ or introducing new elements into $\mathcal{B}$ if necessary---into $\beta_1$ and $\beta_2$ so that if we extend $f_{s}$ by defining $f_{s+1}(\beta_1) = \alpha_1$ and  $f_{s+1}(\beta_2) = \alpha_2$, then properties (1) - (3) above are maintained and $R_{2a}$ is satisfied.  
			
			Suppose the next requirement to be satisfied is to put $b$ into $dom(f)$.  If $b$ has not yet appeared among $\bar{b}$, then simply extend $f_{s}$ to include $b$ in any way consistent with what we've defined so far about $\bar{b}$ and consistent with conditions (1)-(3) above, and define the $R$-labels on the elements of $\bar{b}, b$ accordingly.  Otherwise, $b$ splits each atom $\beta$ of the subalgebra $\bar{d}$ into $\beta_1$ and $\beta_2$, each of which has an $R$-label that must be preserved.  The only interesting case is when $\beta_1, \beta_2$ both have $R$-label $\infty$.  By conditions (1) and (2) above, $\alpha$, the atom in $\bar{c}$ corresponding to $\beta$, has a current approximate $R$-label $\infty$.  Because $\mathcal{A}$ contains no $1$-atom, we ``look ahead'' if necessary, either to discover that this $R$-label on $\alpha$ is incorrect, or to find the least (in terms of the universe $\omega$) element $\alpha_1$ below $\alpha$ so the approximate $R$-labels of both $\alpha_1$ and $\alpha - \alpha_1$ are $\infty$.  If we discover the former, then we must use a smaller  initial segment of $f_{s}$ and start over to satisfy a higher priority requirement.  Otherwise, we are almost ready to meet the requirement $R_{2b+1}$.  If the $R$-labels in $\bar{b}$ imply that there are at least $k_1$ atoms (of $\mathcal{B}$) below $\beta_1$ and at least $k_2$ atoms (of $\mathcal{B}$) below $\beta_2$, then by property (3) above, we have seen at least $k_1 + k_2$ atoms (of $\mathcal{A}$) below $\alpha$.  Consider the element $\alpha_1 \pm$ finitely many atoms below $\alpha$ so that this new element $\alpha_1'$ has at least $k_1$ atoms below it, and $\alpha - \alpha_1'$ has at least $k_2$ atoms below it.  Extend $f_{s}$ by defining $f_{s+1}(\beta_1) = \alpha_1'$ and  $f_{s+1}(\beta_2) = \alpha - \alpha_1'$.  Then properties (1) - (3) above are maintained and $R_{2b+1}$ is satisfied.   
		\end{proof}
	\end{proof}   
	
	\subsection{Trees}
	\label{trees}
	
	We consider some special classes of subtrees of $\omega^{<\omega}$.  Our trees grow downward.  The top node is $\emptyset$.  For the language of trees, we use the predecessor function, where $\emptyset$---the root---is its own predecessor.
	We consider two special classes of trees.  The first is very simple.

	\begin{prop}
		\label{simple.trees}
		
		Suppose $\mathcal{A}$ is a tree such that the top node is infinite (i.e., it has infinitely many successors), and each infinite node has only finitely many successors that are terminal, with the rest all infinite.  Then $\mathcal{A}$ admits strong jump inversion.
		
	\end{prop}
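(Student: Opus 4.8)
The plan is to verify the three hypotheses of Theorem~\ref{simple.general} and then invoke it. First I would describe the $B_1$-types realized in such trees. Since the language has only the predecessor function, an existential formula about a tuple $\bar a$ asserts the existence of a finite configuration of elements standing in prescribed predecessor relations to $\bar a$ and to one another. Thus the $B_1$-type of $\bar a$ is determined by two pieces of finite data: the quantifier-free diagram of the finite subtree generated by $\bar a$ under $pred$ (recording ancestry and the distances among the $a_i$ and the root), together with, for each node of that subtree, a label recording whether it is \emph{terminal} (has no successor) or \emph{infinite} (has infinitely many). The crucial structural feature is that, in $\mathcal A$, every node is either terminal or infinite and every infinite node has infinitely many infinite successors; consequently all infinite nodes share the same downward existential theory, and the finite number of terminal successors hanging off an infinite node is invisible to $B_1$ (counting terminal successors is a $\Sigma_2$, not a $B_1$, matter). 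From the finite data one can decide membership of any $B_1$-formula, by putting it in disjunctive normal form over existential formulas and checking embeddability, so I would let $R$ enumerate, for each finite labeled tree-shape, the associated computable $B_1$-type. This gives a computable enumeration of a set of $B_1$-types including all those realized in $\mathcal A$, establishing Condition~(1).

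For Condition~(2), I would show that $\mathcal A$ is weakly $1$-saturated and then apply the proposition that weak $1$-saturation implies effective type completion. As in the treatments of linear orderings and Boolean algebras, the work is to identify the $B_1$-types $q(\bar u,x)$ generated by formulas of $p(\bar u)$ and existential formulas. A new element $x$ must sit below some node $v$ of the generated subtree, and $v$ must be infinite. If $q$ were to place $x$ as a terminal node below $v$, then, writing $neg$ as in the definition, the existential formula locating $x$ below $v$ conjoined with ``$x$ has a successor'' already lies in $p$ (since $v$ has infinite descendants); so by the generation condition ``$x$ is terminal'' is \emph{not} forced into $q$, and completeness of the $B_1$-type then forces ``$x$ has at least $k$ successors'' into $q$ for every $k$. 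Hence the only generated type at such a position makes $x$ infinite, and it is realized in $\mathcal A$ precisely because $v$ has infinitely many infinite successors, so one may run a chain of infinite nodes down from $v$ to a fresh infinite $x$. A type trying to force $x$ finite below an infinite node is consistent with a complete type in which $v$ has too few small descendants, hence by Lemma~\ref{prop:existential-types} is not generated and need not be realized. This yields weak $1$-saturation, and therefore effective type completion.

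For Condition~(3), I expect the strong form of Remark~\ref{SimpleCondition3} to hold. Given any copy $\mathcal C\cong\mathcal A$, the finite generated subtree of a tuple is computable from $D(\mathcal C)$, and the only additional datum in the $R$-label is the terminal/infinite tag of each node. Because every node of $\mathcal C$ is terminal or infinite, ``$x$ is infinite'' is equivalent in $\mathcal C$ to the existential statement ``$x$ has a successor'', which is $\Sigma^0_1$ relative to $\mathcal C$, while its negation ``$x$ is terminal'' is $\Pi^0_1$. Hence $D(\mathcal C)'$ uniformly computes an $R$-labeling of $\mathcal C$. So if $X'$ computes $D(\mathcal C)'$ for some copy $\mathcal C$, then $X'$ computes a copy, namely $\mathcal C$, together with an $R$-labeling, giving Condition~(3). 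Theorem~\ref{simple.general} then shows that $\mathcal A$ admits strong jump inversion; moreover, by the strong form of Condition~(3) together with Remark~\ref{SimpleCondition3}, a low copy admits a $\Delta^0_2$ isomorphism onto a computable one.

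The main obstacle is the characterization of the generated $B_1$-types in the tree setting and the verification of weak $1$-saturation, namely confirming that the only generated one-point extensions of a realized type place the new element as a fresh infinite node below an infinite node. Once that is in hand, Conditions~(1) and~(3) are routine, the latter holding in the strong form precisely because here the infiniteness of a node is an existential property.
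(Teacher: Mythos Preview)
Your proposal is correct and follows essentially the same route as the paper: both verify the three hypotheses of Theorem~\ref{simple.general}, describe the $B_1$-types via the finite generated subtree with terminal/infinite labels on the nodes, establish Condition~(2) via weak $1$-saturation, and observe that Condition~(3) holds in the strong form of Remark~\ref{SimpleCondition3} because ``infinite'' is existential in these trees. Your write-up is more explicit than the paper's in arguing that a generated type cannot place $x$ as a terminal node below an infinite $v$ (the paper simply asserts that the type ``is realized by a new infinite element''), but you should also mention, as the paper does, the easy case where the type locates $x$ in the subtree already generated by~$\bar a$.
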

	
	\begin{proof}  
		
		The $B_1$-type of a tuple $\bar{a}$ is determined by the subtree generated by $\bar{a}$ and labels ``infinite'' or ``terminal'' on the nodes, in particular, on the $a_i$.  We have a computable enumeration of all possible labeled finite subtrees of trees of this kind.  From this, we get a computable enumeration $R$ of the $B_1$-types.  Suppose that 
		$\mathcal{A}$ is low.  Then there is a $\Delta^0_2$ $R$-labeling of $\mathcal{A}$.  
		
		\bigskip
		\noindent
		\textbf{Weak $1$-saturation}.  Take $\bar{a}$ in $\mathcal{A}$.  Consider a possible $B_1$-type $p(\bar{a},x)$, generated by formulas true of $\bar{a}$ and existential formulas.  The type may locate $x$ in the subtree generated by $\bar{a}$.  Then the type is realized.  The type may locate $x$ properly below some infinite $a_i$, or at some level not below any $a_i$.  Again the type is realized by a new infinite element.
		
		\bigskip
		
		By Theorem~\ref{simple.general}, we get a computable copy of $\mathcal{A}$.
	\end{proof}
	
	The second class of trees is a bit more complicated.  We use some definitions and notation.  If $T$ is a sub-tree of $\omega^{<\omega}$, and $a\in T$, we write $T_a$ for the tree consisting of $a$ and all nodes below.      
	
	\begin{defn}
		
		For nodes $a$ in a fixed tree $T$,   
		
		\begin{enumerate}
			
			\item  we say that $a$ is \emph{finite} if $T_a$ is finite,
			
			\item  we say that $a$ is \emph{infinite} if $T_a$ is infinite.  (For the trees we consider below, if $a$ is infinite, we will require not only that $T_a$ is infinite, but also that $a$ has infinitely many successors, so we will have agreement with the definition we used in Proposition~\ref{simple.trees}.)  
			
		\end{enumerate}
		
	\end{defn}
	
	
	
	
	
	
	
	
	
	\noindent
	\textbf{Notation}.  Let $a$ be finite, with $T_a$ the subtree below $a$.  Let $T^1_a$ be a possible re-labeling of the nodes in $T_a$ in which the nodes in a subtree are labeled $\infty$.  We write $(T^1_a)^*$ for the infinite tree that results from extending the labeled tree $T^1_a$ so that all new nodes in $(T^1_a)^*$ are labeled $\infty$, and each node labeled $\infty$ has infinitely many successors labeled $\infty$. (No finite node in $T^1_a$ acquires successors in $(T^1_a)^*$.)       
	
	\bigskip
	
	Here is the result for the second class of trees.
	
	\begin{prop}
		\label{prop1.9}
		
		Suppose $T$ is a subtree of $\omega^{<\omega}$ such that the top node is infinite, and for any infinite node $a$, there are only finitely many finite successors.  Suppose also that for any infinite node $a$, for any finite successor $b$, if $T^1_b$ is a possible re-labeling of $T_b$ making all nodes in a certain subtree infinite, then there are infinitely many successors $b_n$ of $a$ such that \\
		$T_{b_n}\cong (T^1_b)^*$.  Then $T$ admits strong jump inversion.
		
	\end{prop}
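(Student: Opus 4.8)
The plan is to verify the three hypotheses of Theorem~\ref{simple.general}, following the pattern used for linear orderings (Theorem~\ref{OrderingsNoBound}) and for Boolean algebras, with the richness hypothesis on the blow-ups $(T^1_b)^*$ now playing the role that ``every infinite interval splits into two infinite parts'' played there. For Condition (1), I would first pin down the $B_1$-types. As in Proposition~\ref{simple.trees}, the $B_1$-type of a tuple $\bar a$ is captured by a finite combinatorial datum: the subtree generated by $\bar a$ under the predecessor function (fixed by quantifier-free facts), together with a label on each node $c$ of this subtree. The label records, by existential formulas asserting that prescribed finite configurations appear below $c$ and the forced universal formulas denying all others, one of two things: if $c$ is finite, the complete isomorphism type of the finite tree $T_c$; if $c$ is infinite, the tag \emph{infinite} (carried by the existential formulas ``there are at least $n$ nodes below $c$'', for all $n$) together with the finite list of isomorphism types of the finitely many finite successors of $c$. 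The point is that, under the hypothesis, the set of finite configurations occurring below an infinite node is determined by this finite list: every infinite successor is isomorphic to some $(T^1_b)^*$, and the $*$-operation is a fixed computable operation on finite trees. Hence from such a datum one computes the entire $B_1$-type, and enumerating the data yields a computable enumeration $R$ of a set of $B_1$-types including all those realized in $T$.

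For Condition (2), I would establish weak $1$-saturation and invoke the proposition that this gives effective type completion. The analysis parallels the two cases recorded for Boolean algebras. Given $\bar a$ with $B_1$-type $p(\bar u)$ and a type $q(\bar u,x)$ generated by the formulas of $p(\bar u)$ and existential formulas, the new element $x$ either falls inside the generated subtree, in which case $q(\bar a,x)$ is realized trivially, or it is a new node below some infinite node $c$ of the subtree. In the latter case $x$ cannot be forced to be finite: the universal formula ``$x$ has nothing further below it'' is not forced by $p(\bar u)$ together with the existential formulas of $q$, precisely because $c$ is infinite and a new successor may always be taken infinite (compare Case~1 for Boolean algebras, where Lemma~\ref{prop:existential-types} rules out the finite split). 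Thus every generated $q(\bar a,x)$ asks only for a new infinite successor of $c$ whose subtree blows up some finite successor of $c$, i.e.\ is isomorphic to a $(T^1_b)^*$. Since the generated subtree uses only finitely many nodes, the richness hypothesis supplies infinitely many such successors, so at least one remains available and $q(\bar a,x)$ is realized.

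For Condition (3), which I expect to be the main obstacle, I would build the labeled copy by a construction relative to the oracle $X'$, along the lines of the Lemma inside the proof of Theorem~\ref{OrderingsNoBound}. Unlike the simple trees of Proposition~\ref{simple.trees}, where ``infinite'' coincides with ``non-terminal'' and is therefore $\Delta^0_2$, here the predicate ``$T_c$ is infinite'' is only $\Pi^0_2$ relative to a copy, so $X'$ cannot decide it outright; it can only enumerate the finite nodes, since ``$T_c$ is finite'' is computably enumerable in $X'$. I would therefore run an $X'$-computation that guesses each node of the low copy to be infinite until it is enumerated as finite, commits the labels of the copy $\mathcal B$ permanently, and maintains a partial isomorphism correct in the limit. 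The delicate step, exactly as in the buffer and look-ahead argument of Theorem~\ref{OrderingsNoBound}, is to map a node that $\mathcal B$ has committed to be infinite onto a genuinely infinite node of the low copy carrying the matching finite successors: a tentative target may later be revealed to be finite, and meanwhile we may have attached finite successors to the image that then must be matched. Here the richness hypothesis is precisely what is needed, since below any truly infinite node we are guaranteed infinitely many infinite successors realizing each blow-up $(T^1_b)^*$, hence one with a finite-successor configuration large enough to match whatever has been committed. This produces a copy $\mathcal B$ with an $R$-labeling computable from $X'$, as required.

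With Conditions (1)--(3) in hand, Theorem~\ref{simple.general} produces an $X$-computable copy of $T$ whenever $X'$ computes the jump of a copy, so $T$ admits strong jump inversion. The only genuine work lies in Condition (3); Conditions (1) and (2) are routine adaptations of the corresponding analyses for the simpler classes already treated.
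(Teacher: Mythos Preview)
Your overall strategy---verify the three hypotheses of Theorem~\ref{simple.general}, obtain Condition~(2) via weak $1$-saturation, and build a $\Delta^0_2$-labeled copy for Condition~(3)---is exactly the paper's approach, and your sketch of Condition~(3) captures the essential construction. However, your analysis of the $B_1$-types for Conditions~(1) and~(2) is over-complicated and rests on a false premise.

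You assert that ``every infinite successor is isomorphic to some $(T^1_b)^*$''. The hypothesis does not say this: it guarantees that each blow-up $(T^1_b)^*$ occurs infinitely often among the successors of an infinite node, but it does \emph{not} say that every infinite successor has this form. Consequently your proposed label for an infinite node $c$---the tag ``infinite'' together with the list of isomorphism types of its finite successors---does not determine the $B_1$-type by the mechanism you describe, and your claim in Condition~(2) that a generated type ``asks only for a new infinite successor of $c$ whose subtree blows up some finite successor of $c$'' is likewise unjustified.

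The paper's route is simpler and avoids this. Since every infinite node has infinitely many successors, only finitely many of which are finite, it has infinitely many infinite successors; iterating, below every infinite node there is a copy of $\omega^{<\omega}$. Hence every finite configuration embeds below every infinite node, so the existential type of an infinite node is maximal. The $B_1$-type of a tuple $\bar a$ is therefore determined just by the subtree generated by $\bar a$ with each node labeled ``infinite'' or ``finite with tree $T_a$''---no list of finite successors is needed. This gives Condition~(1) immediately, and weak $1$-saturation follows at once: a generated type placing $x$ below an infinite $a_i$ just asks for an infinite node at the right depth, and the $\omega^{<\omega}$-copy supplies one. With this simpler $R$, your construction for Condition~(3) goes through as you describe, and indeed coincides with the paper's Lemma.
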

	
	\begin{proof}
		
		For simplicity, we suppose that $T$ is low, and we apply Theorem~\ref{simple.general} to produce a computable copy.
		For a tuple $\bar{a}$ in $T$, the $B_1$-type of $\bar{a}$ is determined by the subtree generated by $\bar{a}$ and formulas saying, for an element $a$ of this subtree that it is infinite, or that it is finite with a specific finite tree $T_a$.  
		We can show that $T$ is weakly $1$-saturated.  Consider a $B_1$-type for $\bar{a},x$, generated by $B_1$-formulas true of $\bar{a}$ and existential formulas.  The type may put $x$ in the subtree generated by $\bar{a}$, or in one of the trees $T_{a_i}$, where $a_i$ (in the subtree) is finite.  In either of these cases, the type is realized.  Or, the type may put $x$ below some infinite $a_i$ (in the subtree).  Again, the type is realized, since there is a copy of $\omega^{<\omega}$ below $a_i$.  This shows that $\mathcal{A}$ is weakly $1$-saturated.  
		
		We have a computable enumeration of the possible finite labeled subtrees, and, hence, of the $B_1$-types realized in trees of this kind.  Let $R$ be this computable enumeration of $B_1$-types.  To apply Theorem~\ref{simple.general}, we need the following.       
		
		\begin{lem}
			
			There is a copy $\mathcal{B}$ of $T$ with a $\Delta^0_2$ $R$-labeling.  
			
		\end{lem}
		
		\begin{proof}
			
			We build a $\Delta^0_2$ copy $\mathcal{B}$ of $T$ with nodes labeled as infinite, or with a specific finite tree below.  We suppose that the $\omega$-list of elements of $T$ has the feature that the top element comes first, and any other element comes after its predecessor.  This condition will also hold for the copy $\mathcal{B}$.  For $\mathcal{B}$, we label the top node $\infty$.  Having built a finite labeled subtree of $\mathcal{B}$, and determined a tentative partial isomorphism $f$ from this to a subtree of $T$, we may find that some first node $b$ labeled $\infty$ in $\mathcal{B}$ is mapped to a node $a$ in $T$ such that $T_a$ is actually finite.  The predecessor of $b$, say $b'$, is labeled $\infty$, and we may still believe that the predecessor $a'$ of $a$ in $T$ has an infinite tree below.  In our $\mathcal{B}$, we vow to add no more terminal nodes to $\mathcal{B}_b$ and we look for a successor $a''$ of $a$ with the appropriate $T_{a''}$.  At a given stage, we take the first $a''$ that seems to work.  Our first guess may not be correct---we may eventually see an unwanted finite node in $T_{a''}$.  However, because of the structural properties we are assuming about $T$, we will eventually find a good $a''$, with $T_{a''}$ matching our $\mathcal{B}_b$.          
		\end{proof}
		
		Applying Theorem~\ref{simple.general}, we get a computable copy of $T$.    
	\end{proof}

	\subsection{Models of a theory with few $B_1$-types}
	\label{small}
	
	Lerman and Schmerl \cite{LS} gave conditions under which an $\aleph_0$-categorical theory $T$ has a computable model.  They assumed that the theory is arithmetical and $T\cap\Sigma_{n+1}$ is $\Sigma^0_n$ for each $n$.  In \cite{K}, the assumption that $T$ is arithmetical is dropped, and, instead, it is assumed that $T\cap\Sigma_{n+1}$ is $\Sigma^0_n$ uniformly in $n$.  The proof in \cite{LS} gives the following.
	
	\begin{thm} [Lerman-Schmerl]
		
		Let $T$ be an $\aleph_0$-categorical theory that is $\Delta^0_N$ and suppose that for all $1\leq n < N$, $T\cap\Sigma_{n+1}$ is $\Sigma^0_n$.  Then $T$ has a computable model.     
		
	\end{thm}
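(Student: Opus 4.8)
The plan is to reduce the statement to building a computable copy of the unique countable model of $T$, and then to carry out that construction by a Henkin-style argument in which the real work is matching quantifier complexity of formulas against the arithmetical complexity of $T$. First I would invoke the Ryll-Nardzewski theorem: since $T$ is $\aleph_0$-categorical, for each $n$ there are only finitely many complete $n$-types, every complete type is principal, and the unique countable model $\mathcal{M}$ is atomic (equivalently prime). Thus it suffices to produce a computable structure isomorphic to $\mathcal{M}$, i.e.\ to write down, on domain $\omega$, the atomic diagram of an atomic model of $T$.

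Next I would set up the standard effective construction of an atomic model. We enumerate domain elements $a_0,a_1,\ldots$ and maintain, for the current tuple $\bar a = (a_0,\ldots,a_{n-1})$, a formula $\theta_n(\bar x)$ that isolates its complete type relative to $T$. The construction alternates between reading off the atomic facts about $\bar a$ from $\theta_n$ and meeting Henkin requirements: whenever $\theta_n$ implies $\exists y\,\varphi(\bar x,y)$, we adjoin a fresh element $a_n$ and choose a complete formula $\theta_{n+1}(\bar x,y)$ that extends $\theta_n$ and contains $\varphi(\bar x,y)$. The \emph{existence} of such a completing formula is precisely the fact that every type extends to a principal type, which holds automatically under $\aleph_0$-categoricity; the problem is to find and certify these formulas \emph{computably}.

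This is where the complexity hypotheses enter. We cannot decide $T$ outright, as $T$ is only $\Delta^0_N$; the leverage is that membership in $T$ is simpler for sentences of low quantifier complexity. For a $\Sigma_{n+1}$ sentence, membership in $T$ is $\Sigma^0_n$, and, since $T$ is complete, for a $\Pi_{n+1}$ sentence it is $\Pi^0_n$; at the top, all queries are covered by the $\Delta^0_N$ bound. The plan is to run the construction with finite approximations to these facts, stratified by the $N$ complexity levels, so that a question arising from a formula of complexity $n+1$ is resolved using only the $\Sigma^0_n$-approximation. Guesses at the higher-level facts are organized along a priority tree whose height matches the stratification; a wrong guess injures only finitely often along the true path, and — crucially — the finiteness of the set of complete $n$-types at each arity means there are only finitely many candidate isolating formulas to search through at each level, so every search for a completing isolating formula terminates and every guess eventually stabilizes. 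Replacing the $\Delta^0_N$ oracle by these nested computable approximations yields a computable atomic diagram, hence a computable model of $T$.

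The main obstacle I anticipate is exactly this complexity bookkeeping: showing that isolating formulas can be \emph{chosen, and recognized}, at the quantifier level matching the available $\Sigma^0_n$-approximation, and that the layered approximation process converges rather than cycling among candidate types. Here $\aleph_0$-categoricity carries the argument — finiteness of the type spectrum at each arity simultaneously guarantees that the needed completions exist and bounds the number of injuries, forcing the nested guesses to settle so that the limiting construction is genuinely computable rather than merely $\Delta^0_N$.
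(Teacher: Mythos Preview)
Your plan is plausible but differs from the paper's route and leaves the central mechanism implicit. The paper does not run a single Henkin construction against a height-$N$ priority tree; instead it isolates a one-step lemma and iterates it: if some model has its $B_{n+1}$-diagram computable in $X'$ and $T\cap\Sigma_{n+2}$ is $\Sigma^0_1$ in $X$, then there is a model whose $B_n$-diagram is computable in $X$. Starting from a copy decidable in $\emptyset^{(N-1)}$ (available because $T$ is $\Delta^0_N$ and $\aleph_0$-categorical) and applying this lemma with $X=\emptyset^{(N-2)},\ldots,\emptyset$ in turn, one arrives at a computable model. Each application is a clean single-jump construction, and the hypothesis $T\cap\Sigma_{n+1}\in\Sigma^0_n$ is invoked exactly once, at the passage from level $n$ to level $n-1$.

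The gap in your outline is that you commit to maintaining a generator $\theta_n$ of the \emph{complete} type of the current tuple throughout. Reading off atomic facts from $\theta_n$, and certifying that a candidate $\theta_{n+1}$ isolates a complete type, are questions of the form ``does $T\vdash\theta\to\psi$?'', and their arithmetical complexity is governed by the quantifier rank of $\theta$ --- which nothing in your setup bounds. The hypothesis that $T\cap\Sigma_{n+1}$ is $\Sigma^0_n$ helps only when the sentence under test is genuinely $\Sigma_{n+1}$; it says nothing about implications from an arbitrary isolating formula. The Lerman--Schmerl decomposition sidesteps this by tracking, at level $n$, only the $B_n$-type: by $\aleph_0$-categoricity the finitely many $B_n$-types are separated by $B_n$-formulas, so the relevant provability questions live at exactly the quantifier level where the hypothesis bites. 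Your priority-tree picture can be made to work, but only after this level-by-level refinement is made explicit --- at which point you have essentially rediscovered the step-down lemma inside the tree, and the modular, iterated form in the paper is the cleaner packaging.
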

	
	To prove this, Lerman and Schmerl showed the following.
	
	\begin{lem}
		
		For any $n < N$, if $\mathcal{A}$ is a model whose $B_{n+1}$-diagram is computable in $X'$, and $T\cap\Sigma_{n+2}$ is $\Sigma^0_1$ in $X$, then there is a model $\mathcal{B}$ whose $B_n$-diagram is computable in $X$. 
		
	\end{lem}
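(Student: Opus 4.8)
The plan is to run the back-and-forth of Theorem~\ref{simple.general}, relativized to $X$ and lifted from level $1$ to level $n+1$, with $\aleph_0$-categoricity supplying the enumeration and completion of types and $\mathcal{A}$ supplying the labeling. First I would record what $\aleph_0$-categoricity buys: by the Ryll--Nardzewski theorem $T$ has only finitely many complete types of each arity, all principal, so there are only finitely many $B_n$-types of each arity, each isolated by a single $B_n$-formula. Since $T$ is $\aleph_0$-categorical, every countable model is isomorphic to $\mathcal{A}$; hence it suffices to build \emph{some} $\mathcal{B}\models T$ whose $B_n$-diagram is computable in $X$, and $\mathcal{B}\cong\mathcal{A}$ will follow.

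Next I would extract an $X$-computable theory of types from the hypothesis. The key observation is that $T\cap\Sigma_{n+2}$ being $\Sigma^0_1$ in $X$, together with completeness of $T$, makes membership in $T$ \emph{decidable} in $X$ for $\Sigma_{n+1}$- and $\Pi_{n+1}$-sentences, hence for $B_{n+1}$- and $B_n$-sentences: a $\Sigma_{n+1}$-sentence $\sigma$ and its negation $\neg\sigma$ are both $\Sigma_{n+2}$, so ``$\sigma\in T$'' and ``$\neg\sigma\in T$'' are each c.e.\ in $X$, and completeness forces exactly one to appear. This yields the level-$n$ analogues of Conditions (1) and (2): $X$ computes an enumeration $R$ of the realized $B_n$-types, and $X$ carries out effective type completion at level $n$, producing from a realized $B_n$-type $p(\bar u)$ and a $\Sigma_n$-formula a $B_n$-type $q(\bar u,x)$ generated by $p$ and level-$n$ existential formulas exactly as in the Proposition deducing effective type completion from weak $1$-saturation and in Lemma~\ref{prop:existential-types}. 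The role of Condition (3) is played by $\mathcal{A}$: its $B_{n+1}$-diagram, computable in $X'$, furnishes an $X'$-computable labeling of $\mathcal{A}$ by realized $B_{n+1}$-types, which we consult to decide which $B_n$-completions to commit to.

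With these ingredients I would carry out the construction of Theorem~\ref{simple.general} verbatim, reading ``computable'' as ``$X$-computable'', reading the labeling of $\mathcal{A}$ at level $B_{n+1}$ rather than $B_1$, and reading ``the atomic diagram of $\mathcal{B}$'' as ``the permanent commitments to the complete $B_n$-types of the listing-initial segments of $\mathcal{B}$''. The requirements $R_{2a}$ and $R_{2b+1}$ again drive an $X'$-computable map $f\colon\mathcal{B}\to\mathcal{A}$ to be onto and total. At each step the $B_n$-type commitments are made $X$-computably via the effective completion procedure and are never retracted, just as the atomic diagram is never retracted in the general proof, while the guesses at $\mathcal{A}$'s $B_{n+1}$-labels and the values of $f$ use $X'$ and may be revised; a revision only shortens $f$ and never disturbs a committed $B_n$-type. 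In the limit $f$ is an isomorphism, so $\mathcal{B}\cong\mathcal{A}\models T$, and since the $B_n$-type of an arbitrary tuple is read off $X$-computably from the permanent, coherent $B_n$-type of a listing-initial segment, the $B_n$-diagram of $\mathcal{B}$ is computable in $X$.

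The hard part will be the gap of one quantifier level between what we control and what we must guarantee. The $\Sigma_{n+2}$-hypothesis lets $X$ decide $B_n$- and $B_{n+1}$-facts, but to ensure $\mathcal{B}\models T$ we must realize the correct types at \emph{all} levels, information we cannot afford directly; this is why $\mathcal{A}$ is indispensable, providing, through its $X'$-computable $B_{n+1}$-diagram, exactly the one-level-up guidance needed to certify that each committed $B_n$-type extends to a genuine realization in the unique model. The crux is to show that matching $\mathcal{B}$ to $\mathcal{A}$ at the level of $B_{n+1}$-types suffices to force $\mathcal{B}\cong\mathcal{A}$, while the permanent $X$-level $B_n$-commitments are reconciled with the revisable $X'$-level isomorphism $f$; this reconciliation is precisely the relativized, level-lifted content of the proof of Theorem~\ref{simple.general}.
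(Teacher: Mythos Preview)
The paper does not actually prove this lemma; it is stated as a result of Lerman and Schmerl and cited to \cite{LS} without proof, serving only to motivate the paper's own Theorem~\ref{LS}. So there is no proof in the paper to compare against directly. That said, your approach is exactly the level-$n$ lift of what the paper does prove: in Theorem~\ref{LS} the paper handles the case $n=0$ (with the $\aleph_0$-categoricity hypothesis weakened to ``finitely many $B_1$-types in each arity'') by verifying the three conditions of Theorem~\ref{simple.general} and applying it. Your proposal replays that argument one level up, replacing $B_0$/$B_1$ by $B_n$/$B_{n+1}$, and correctly identifies where each hypothesis is used: $T\cap\Sigma_{n+2}$ being $\Sigma^0_1(X)$ makes $T\cap B_{n+1}$ decidable in $X$, which supplies the enumeration of principal $B_n$-types and the effective type-completion step (the level-$n$ analogue of the paper's weak $1$-saturation argument for Theorem~\ref{LS}); the $X'$-computable $B_{n+1}$-diagram of $\mathcal{A}$ supplies the labeling. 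The one place to be careful is your remark that the permanent commitments are to full $B_n$-types rather than to finite atomic fragments: this only makes sense because $\aleph_0$-categoricity forces every $B_n$-type to be isolated by a single $B_n$-formula, so the commitment is still finitary at each stage --- you use this implicitly and it deserves to be said explicitly. With that caveat, your argument is sound and is precisely the relativized, level-shifted version of the paper's own method.
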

	
	Let $T$ be as in the Lerman-Schmerl Theorem.  Let $\mathcal{A}$ be a model of $T$ that is low over $X$.  Then the $\Sigma_1$ diagram of $\mathcal{A}$ is computable in $X'$.  Of course, 
	$T\cap \Sigma_2$ is $\Sigma^0_1$, so it is $\Sigma^0_1$ relative to $X$.  The lemma implies that $\mathcal{A}$ has an $X$-computable copy.  In fact, we get the following.  
	
	\begin{thm}  
		\label{LS}
		
		Let $T$ be an elementary first order theory, in a computable language, such that $T\cap\Sigma_2$ is $\Sigma^0_1$.  Suppose that for each tuple of variables $\bar{x}$, there are only finitely many $B_1$-types in variables $\bar{x}$ consistent with $T$.  Then every model $\mathcal{A}$ admits strong jump inversion.  Moreover, if $\mathcal{A}$ is low over $X$, then there is an $X$-computable copy $\mathcal{B}$ with an isomorphism that is $\Delta^0_2$ relative to $X$.     
		
	\end{thm}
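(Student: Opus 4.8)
The plan is to apply Theorem~\ref{simple.general} to an arbitrary model $\mathcal{A}$ of $T$, verifying its three hypotheses, and to read off the complexity bound from the strong form of Condition (3) in Remark~\ref{SimpleCondition3}. Throughout I will take $T$ to be complete, which is the relevant case (it holds automatically in the $\aleph_0$-categorical situation this generalizes); completeness is used only so that a $\Sigma_2$-sentence $\exists\bar{x}\,\theta$ (with $\theta$ a $B_1$-formula) lies in $T$ exactly when $\theta$ is consistent with $T$, whence the relation ``$\theta$ is consistent with $T$'' is c.e.\ because $T\cap\Sigma_2$ is $\Sigma^0_1$. Of the three conditions, Conditions (2) and (3) are comparatively routine, and the real work is Condition (1).

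For Condition (1), the starting point is that finitely many $B_1$-types in $\bar{x}$ means the $B_1$-formulas in $\bar{x}$ form a finite Boolean algebra modulo $T$, so each $B_1$-type is \emph{isolated} by a single $B_1$-formula (an atom of this algebra). The device that turns this into a \emph{computable} enumeration is a greedy completion: given a $B_1$-formula $\theta$ that is known to be consistent with $T$, I run through all $B_1$-formulas $\psi$ in order, maintaining a running consistent conjunction, and at each step search in parallel for a confirmation that the running conjunction is consistent either with $\psi$ or with $\neg\psi$; since the running conjunction stays consistent, at least one search halts, and I adopt whichever confirms first. This yields, uniformly and computably in $\theta$, a complete $B_1$-type extending $\theta$, and it is a genuine type since by compactness every finite piece is consistent with $T$. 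When $\theta$ is an atom the two searches can never both succeed, so the procedure recovers exactly the type isolated by $\theta$. To assemble a single computable set of pairs $R$, I index the procedure by a computable listing of the (c.e.) set of consistent $B_1$-formulas, which keeps every row total and decidable, and every type realized in $\mathcal{A}$ appears because its isolating atom is consistent and hence listed. This step is the main obstacle: the finiteness of the type space is what makes the atoms exist, but one must be careful that each row of $R$ is a genuine complete, satisfiable type and that totality is not lost on inconsistent formulas.

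For Condition (2) I would show that every model of $T$ is \emph{weakly $1$-saturated} and then invoke the proposition that weak $1$-saturation implies effective type completion. Let $\bar{a}$ realize $p(\bar{u})$ and let $q(\bar{u},x)$ be generated by $p$ and existential formulas. Directly from the definition of ``generated'' one checks that for every finite conjunction $\chi$ of existential formulas of $q$ the existential formula $\exists x\,\chi$ lies in $p$, hence holds of $\bar{a}$. Finiteness then enters: ranging over all $b$ in $\mathcal{A}$, the pair $\bar{a},b$ realizes only finitely many $B_1$-types extending $p$, so by pigeonhole one of them, $q_0$, contains every existential formula of $q$; a short argument using the generation rule for the universal formulas of $q$ forbids $q_0$ from carrying any extra existential formula, and since a complete $B_1$-type is determined by the existential formulas it contains, $q_0=q$. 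Thus $q(\bar{a},x)$ is realized in $\mathcal{A}$. (Lemma~\ref{prop:existential-types} guarantees $q$ is consistent with the complete type of $\bar{a}$, which is what makes this search meaningful.)

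For Condition (3) and the complexity statement I would argue in the strong form of Remark~\ref{SimpleCondition3}: for any copy $\mathcal{C}\cong\mathcal{A}$, the truth of each existential formula is $\Sigma^0_1$ in $D(\mathcal{C})$, so the full $B_1$-type of any tuple is computable from $D(\mathcal{C})'$. Matching this type against the computable enumeration $R$ lets $D(\mathcal{C})'$ compute an $R$-labeling, since there are only finitely many candidate types and a wrong index is eventually refuted. Relativizing to $X$, if $\mathcal{A}$ is low over $X$ then $X'$ computes an $R$-labeling of a copy, and Theorem~\ref{simple.general} together with Remark~\ref{SimpleCondition3} delivers an $X$-computable copy with an isomorphism that is $\Delta^0_2$ relative to $X$, which is exactly the asserted ``moreover.''
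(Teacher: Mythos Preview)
Your proposal is correct and follows the same overall strategy as the paper: verify Conditions (1)--(3) of Theorem~\ref{simple.general}, with (3) in the strong form of Remark~\ref{SimpleCondition3}, and read off the $\Delta^0_2$ isomorphism. Your explicit flag that completeness of $T$ is being used (so that ``$\theta$ is consistent with $T$'' is witnessed by $\exists\bar{x}\,\theta\in T\cap\Sigma_2$) is a point the paper leaves implicit.

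Where you differ is in the tactics for Conditions (1) and (2). For (1), the paper builds, for each $\bar{x}$, a c.e.\ tree $P_{\bar{x}}$ whose paths are exactly the $B_1$-types, and then assigns indices by tracking terminal nodes stage by stage; you instead run a greedy completion from each consistent $B_1$-formula and index by a computable listing of that c.e.\ set. Both are valid; your construction is arguably more direct, while the paper's tree picture makes it transparent that there are only finitely many paths and that every node extends to one. For (2), the paper invokes the fact that the isolated type $q(\bar{u},x)$ has a generating formula of the shape $\rho(\bar{u})\,\&\,\chi(\bar{u},x)$ with $\chi$ a conjunction of existentials, and then observes $(\exists x)\chi\in p$; you instead run a pigeonhole argument on the finitely many $B_1$-types realized by $(\bar{a},b)$ as $b$ varies, and show the resulting $q_0$ must coincide with $q$. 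Your argument is longer but makes explicit the step (why the generator of $q$ can be taken existential in $x$) that the paper's one-line proof leaves to the reader. For (3), both arguments are essentially the same; your phrase ``a wrong index is eventually refuted'' is best cashed out, as the paper does, by first having $\Delta^0_2$ locate the finitely many generating formulas for the types in $\bar{x}$ and then testing which one $\bar{a}$ satisfies---otherwise a naive limit search over all $R$-indices would only be $\Delta^0_2$ in $D(\mathcal{C})'$, not in $D(\mathcal{C})$.
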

	
	\begin{proof}   
		
		First, we show that there is a computable enumeration of all the $B_1$-types.  Uniformly in each tuple of variables $\bar{x}$, we build a c.e.\ tree $P_{\bar{x}}$ whose paths represent the $B_1$-types in $\bar{x}$.  We have a computable enumeration of $B_1$-formulas $(\varphi_n(\bar{x})_{n\in\omega}$.  At level $n$, the nodes $\sigma$ in $P_{\bar{x}}$ represent the different finite sequences of formulas $\pm\varphi_k$ (in the appropriate tuple of variables), for $k < n$, that we see to be consistent with $T$, using the fact that $T\cap\Sigma_2$ is c.e.  Note that each node $\sigma\in P_{\bar{x}}$ extends to a path.  Also, $P_{\bar{x}}$ has only finitely many paths.  We may suppose, running the enumeration of $T\cap\Sigma_2$ ahead, if necessary, that at step $s$, for the first $s$ tuples of variables $\bar{x}$, the terminal nodes in our approximation to $P_{\bar{x}}$ all have length $s$.            
		
		We use all of these trees together to define the enumeration $R$.  At stage $s$, we have assigned indices to the currently terminal nodes $\sigma$ in $P_{\bar{x}}$ for the first $s$ tuples of variables $\bar{x}$.  For a given node $\sigma$, assigned index $i$, we will have put into $R_i$ the formulas $\pm\varphi$ corresponding to this node $\sigma$.  At stage $0$, we assign the index $0$ to the top node of $P_\emptyset$.  At stage $s+1$, for each of the first $s$ tuples of variables $\bar{x}$, each node $\sigma$ of length $s$ in $P_{\bar{x}}$ has at least one extension of length $s+1$.  We give the index of $\sigma$ to one such $\tau$.  There may be further extensions of $\sigma$ or other old nodes, and we give these new indices.  In addition, for the $(s+1)^{st}$ tuple of variables $\bar{x}$, we assign indices to the terminal nodes of the stage $s+1$ approximation.  For the indices $i$ assigned by stage $s+1$ to nodes $\sigma$ of tree $P_{\bar{x}}$, we put into $R_i$ all of the formulas corresponding to $\sigma$.  This process yields the desired computable enumeration of the $B_1$-types consistent with $T$.                  
		
		Next, we show that $\mathcal{A}$ is weakly $1$-saturated.  Suppose $q(\bar{u},x)$ is a $B_1$-type (consistent, of course) generated by formulas true of $\bar{a}$ and existential formulas $\varphi(\bar{u},x)$.  Since $q(\bar{u},x)$ is isolated, it is principal, with a generating formula $\gamma(\bar{u},x)$, of the form $\rho(\bar{u})\ \&\ \chi(\bar{u},x)$, where $\rho(\bar{u})$ is in the $B_1$-type of $\bar{a}$, and $\chi(\bar{u},x)$ is a finite conjunction of existential formulas. $B_1$ type of $\bar{a}$ includes the formula $(\exists x)\chi(\bar{u},x)$. We have $(\exists x)\chi(\bar{u},x)$ true of $\bar{a}$ in $\mathcal{A}$, so the type is realized.    
		
		\begin{lem}
			
			If $\mathcal{A}$ is low over $X$, then there is an $R$-labeling of $\mathcal{A}$ that is $\Delta^0_2$ relative to $X$.  
			
		\end{lem}
		
		\begin{proof}
			
			For simplicity, we suppose $\mathcal{A}$ is low.  For a tuple of variables $\bar{x}$, $\Delta^0_2$ can find generating formulas for all of the $B_1$-types.  Then $\Delta^0_2$ can check which generating formula is true of a given tuple of elements $\bar{a}$.  Then we have a $\Delta^0_2$ $R$-labeling.    
		\end{proof}
		
		Finally, we apply Theorem~\ref{simple.general} to get an $X$-computable copy $\mathcal{B}$ of $\mathcal{A}$ with an isomorphism from $\mathcal{B}$ to $\mathcal{A}$ that is $\Delta^0_2$ relative to $X$.           
	\end{proof}
	
	\noindent
	\textbf{Note}:  There are non-$\aleph_0$-categorical theories satisfying the conditions of Theorem~\ref{LS}.
	
	\begin{proof} 
		
		We write $\Theta$ for the ordering of type $\eta + 2 + \eta$.  In \cite{DK}, it was shown that for any linear ordering 
		$\mathcal{A}$, $\Theta\cdot\mathcal{A}$ has a computable copy iff $\mathcal{A}$ has a $\Delta^0_2$ copy.
		Let $T_1$ be a complete theory of linear orderings that is not $\aleph_0$-categorical.  Let $T$ be the complete theory whose models are exactly the orderings of the form $\Theta\cdot\mathcal{A}$, where $\mathcal{A}$ is a model of $T_1$.  The theory $T$ has a sentence saying that every element lies on an interval of type $\Theta$.  In addition, there are axioms guaranteeing that the restriction of our ordering to the set of elements that are the first in a successor pair satisfies all sentences $\varphi$ in~$T_1$.  
		
		We note that the $B_1$-types realized in models of $T$ come from partitions into intervals of size $0$ or $\infty$, with no two adjacent intervals of size $0$.  These are principal, so they are realized in all models of $T$.  We note that if we replace $T_1$ by some other theory $S_1$ of infinite linear orderings, and form $S$ in the same way, then the $B_1$-types realized in any and all models of $S$ would be the same.  Therefore, the $\Sigma_2$ theories are the same.  If $S_1$ is decidable, then so is $S$.  Thus, whether or not $T_1$ is decidable, $T\cap\Sigma_2$ is decidable.  We chose $T_1$ not $\aleph_0$-categorical, so $T$ is also not $\aleph_0$-categorical.   
	\end{proof}
	
	\subsection{Differentially closed fields}
	\label{DCF_0} 
	
	\subsubsection{$DF_0$}
	
	A \emph{differential field} is a field with one or more derivations satisfying the following familiar rules:
	\begin{enumerate}
		
		\item $\delta(u+v) = \delta(u) + \delta(v)$, and 
		
		\item  $\delta(u\cdot v) = u\cdot\delta(v) + \delta(u)\cdot v$.  
		
	\end{enumerate}
	We consider differential fields of characteristic $0$, and with a single derivation~$\delta$.
	
	Trivially, $\mathbb{Q}$ is a differential field, under the derivation that takes all elements to $0$.
	If $a$ is an element of a differential field $K$, then $a$ \emph{generates} a differential field $F\subseteq K$, where the elements of $F$ are gotten from $a$ by closing under addition, multiplication, subtraction, division, and derivation.         
	
	\subsubsection{$DCF_0$}
	
	Roughly speaking, a \emph{differentially closed field} is a differential field in which differential polynomials have roots, where a differential polynomial is a polynomial $p(x)$ in $x$ and its various derivatives.  We write $DCF_0$ for the theory of differentially closed fields (of characteristic $0$, with a single derivation).  A.\ Robinson showed that the theory $DCF_0$ admits elimination of quantifiers.  L.\ Blum, in her thesis, gave a nice computable set of axioms, showing that the theory is decidable.  Thus, the elimination of quantifiers is effective.  Blum also showed that $DCF_0$ is $\omega$-stable.  Then general model-theoretic results imply the existence and uniqueness of prime models over an arbitrary set.  The existence and uniqueness of differential closures were not proved by algebraic methods---they really used the model theoretic results.  For a discussion of differentially closed fields, emphasizing Blum's results, see Sacks \cite{Sacks}. 
	
	\subsubsection{Differential polynomials}  
	
	We consider \emph{differential polynomials} $p(x)$ in a single variable $x$.  A differential polynomial $p(x)$, over a differential field $K$, may be thought of as an algebraic polynomial in $K[x,\delta(x),\delta^{(2)}(x),\ldots,\delta^{(n)}(x)]$, for some $n$.  We write $K\langle x\rangle$ for the set of differential polynomials over $K$.  Initially, we let $K$ be $\mathbb{Q}$, where $\delta(q) = 0$ for all $q\in\mathbb{Q}$.  Later, $K$ will be a finitely generated extension of $\mathbb{Q}$.  Differential fields satisfy the quotient rule---this is easy to prove from the product rule.  From this, it follows that if $a$ is an element of a differential field extending $K$, and $F$ is the differential subfield generated over $K$ by $a$, then each element of $F$ can be expressed in the form $\frac{p(a)}{q(a)}$, where $p(x),q(x)\in K\langle x\rangle$.   
	
	\begin{defn} [Order]
		
		For $p(x)\in K\langle x\rangle$, the \emph{order} is the greatest $n$ such that $\delta^{(n)}(x)$ appears non-trivially in $p(x)$.  There are some special cases.  An algebraic polynomial in $x$ (with no derivatives) has order $0$.  The $0$ polynomial has order $\infty$.  
		
	\end{defn}
	
	\begin{defn} [degree, rank, order of ranks]
		
		For $p(x)\in K\langle x\rangle$ of finite order $n$, the \emph{degree} of $p(x)$ is the highest power $k$ of $\delta^{(n)}(x)$ that appears.  The \emph{rank} of $p(x)$ is the ordered pair $(n,k)$, where $n$ is the order and $k$ is the degree.  We order the possible ranks of differential polynomials lexicographically.    
		
	\end{defn}  
	
	\begin{defn}
		
		A differential polynomial $p(x)\in K\langle x\rangle$ of order $n$ is said to be \emph{irreducible} if it is irreducible when considered as an algebraic polynomial in $K[x,\delta(x),\ldots,\delta^{(n)}(x)]$ (think of $x$ and its derivatives as indeterminates).  We count the $0$ polynomial as irreducible. 
		
	\end{defn}     
	
	\subsubsection{Blum's axioms for $DCF_0$}
	
	Blum's axioms say that a differentially closed field (of characteristic $0$ and with a single derivation), is a differential field $K$ such that 
	\begin{enumerate}
		
		\item  for any pair of differential polynomials $p(x),q(x)\in K\langle x\rangle$ such that the order of $q(x)$ is less than that of $p(x)$, there is some $x$ satisfying $p(x) = 0$ and $q(x)\not= 0$,  
		
		\item  if $p(x)$ has order $0$, then $p(x)$ has a root.  
		
	\end{enumerate}
	The axioms of form (2) say that $K$ is algebraically closed.  
	
	\subsubsection{Types}    
	
	We want to understand the types, in any number of variables, realized in models of $DCF_0$.  For a single variable $x$, each type over $\emptyset$ is determined by an irreducible differential polynomial $p(x)\in\mathbb{Q}\langle x\rangle$.  If $p(x)\in \mathbb{Q}\langle x\rangle$ is irreducible of order $n$, then the corresponding type consists of formulas provable from the axioms of $DCF_0$, the formula $p(x) = 0$ and further formulas $q(x)\not= 0$, for $q(x)\in \mathbb{Q}\langle x\rangle$ of order less than $n$.  The formulas $q(x)\not= 0$, for $q(x)\in\mathbb{Q}\langle x\rangle$ of order less than $n$, say that 
	$x,\delta(x),\delta^{(2)}(x), \ldots, \delta^{(n-1)}(x)$ are algebraically independent over $\mathbb{Q}$.  We allow the case where $p(x)$ is the $0$ polynomial, which has order $\infty$.  In this case, the corresponding type $\lambda_p$ consists of the formulas provable from the axioms of $DCF_0$ and the formulas $q(x)\not= 0$ for $q(x)$ of all finite orders.     
	
	Similarly, for a differential field $K$, each type over $K$ (to be realized in some extension of $K$ to a model of $DCF_0$) is determined by an irreducible differential polynomial $p(x)\in K \langle x\rangle$.  If $p(x)$ is irreducible of order $n$, the corresponding type $\lambda_{K,p}$ consists of formulas provable from the axioms of $DCF_0$, the atomic diagram of $K$, the formula $p(x) = 0$, and further formulas $q(x)\not= 0$, for $q(x)$ of order less than $n$.  The formulas $q(x)\not= 0$, taken together, say that 
	$x,\delta(x),\ldots,\delta^{(n-1)}(x)$ are algebraically independent over $K$. 
	
	A proof of the following result can be found in Sacks \cite{Sacks}, pp.\ 297-298.   
	
	\begin{prop}\
		
		\begin{enumerate}
			
			\item  If $p(x)\in \mathbb{Q}\langle x\rangle$ is irreducible, the corresponding type $\lambda_p$ is complete over $\emptyset$.  Moreover, all types over $\emptyset$ (in the variable $x$) have this form.
			
			\item  For a differential field $K$, if $p(x)\in K\langle x\rangle$ is irreducible, then $\lambda_{K,p}$ is a complete type over $K$, and all types over $K$ (in the variable $x$) have this form.  
			
		\end{enumerate}
		
	\end{prop}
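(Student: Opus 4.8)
The plan is to prove both parts at once via the correspondence between complete types and prime differential ideals; part (1) is simply the case $K=\mathbb{Q}$ of part (2), since types over $\emptyset$ are types over the prime differential field $\mathbb{Q}$ (on which $\delta$ is necessarily trivial). The starting point is A.\ Robinson's quantifier elimination for $DCF_0$: the complete type of an element $a$ over $K$ in a model of $DCF_0$ is determined by its quantifier-free part, and in a single variable $x$ every quantifier-free formula over $K$ is a Boolean combination of equations $q(x)=0$ with $q\in K\langle x\rangle$. Thus the type of $a$ is determined by the set $I_a=\{q\in K\langle x\rangle : q(a)=0\}$. I would first check that $I_a$ is a prime differential ideal: it is prime because a model is a field, and it is a differential ideal because $\delta(q(a))=0$ whenever $q(a)=0$, where $\delta(q(a))$ is itself $q^\delta(a)$ for a differential polynomial $q^\delta\in K\langle x\rangle$ (using $\delta(K)\subseteq K$). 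Conversely, any prime differential ideal $I$ of $K\langle x\rangle$ is realized: the fraction field of $K\langle x\rangle/I$ is a differential field extending $K$, and it embeds into a model of $DCF_0$, with the image of $x$ realizing exactly $I$. So classifying types over $K$ reduces to classifying prime differential ideals of $K\langle x\rangle$.

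The zero ideal corresponds to the differentially transcendental type, which is exactly $\lambda_{K,0}$. For a nonzero prime differential ideal $I$, I would choose $p\in I$ of minimal \emph{rank} (least order $n$, and least degree in $\delta^{(n)}(x)$ among those). Such a $p$ is irreducible as an algebraic polynomial in $x,\delta(x),\ldots,\delta^{(n)}(x)$: a nontrivial algebraic factorization $p=p_1p_2$ would, by primeness, force some $p_i\in I$ of strictly smaller rank, contradicting minimality. I would then introduce the \emph{separant} $S_p=\partial p/\partial\delta^{(n)}(x)$ and the \emph{initial} (the leading coefficient of $p$ in $\delta^{(n)}(x)$); both have rank strictly below that of $p$, and so lie outside $I$ by minimality.

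The heart of the argument is Ritt's reduction theory, which I would use to show that $I$ is exactly the prime differential ideal cut out by $p$, i.e.\ the ideal associated with $\lambda_{K,p}$. Given any $q\in K\langle x\rangle$, differentiating $p(x)=0$ gives $\delta(p)=S_p\cdot\delta^{(n+1)}(x)+(\text{lower order})$, so, modulo the differential ideal generated by $p$ and after multiplying by a suitable power of $S_p$, each derivative of order $>n$ can be expressed in terms of lower-order data, reducing $q$ to order $\le n$; an algebraic pseudo-division by $p$ in the variable $\delta^{(n)}(x)$, multiplying by a power of the initial, then lowers the degree below that of $p$. The net effect is a relation of the form $S_p^{\,a}\,(\mathrm{init})^{\,b}\,q\equiv r\pmod{[p]}$ with $r$ of rank strictly less than $p$. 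Since $S_p$ and the initial are not in the prime ideal $I$, we get $q\in I$ iff $r\in I$, and by minimality of the rank of $p$ this forces $r=0$. This identifies $I$ with the set of $q$ reducing to $0$ modulo $p$, which is precisely the ideal generated by the formulas of $\lambda_{K,p}$, and shows every nonzero prime has the asserted form.

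Finally, to confirm that each $\lambda_{K,p}$ is genuinely a complete type, I would verify consistency together with uniqueness of the generated differential field. Consistency follows from the realization constructed above. For completeness, I would show that any two realizations $a,a'$ generate $K$-isomorphic differential fields via $a\mapsto a'$: the formulas $q(x)\ne0$ for $q$ of order $<n$ force $a,\delta(a),\ldots,\delta^{(n-1)}(a)$ to be algebraically independent over $K$, the equation $p(x)=0$ makes $\delta^{(n)}(a)$ algebraic over $K(a,\ldots,\delta^{(n-1)}(a))$ with minimal polynomial $p$, and---crucially---since $p$ is irreducible, hence separable in characteristic $0$, we have $S_p(a)\ne0$, so differentiating $p(a)=0$ solves uniquely for every higher derivative of $a$. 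Thus $K\langle a\rangle$ is determined up to $K$-isomorphism, and quantifier elimination upgrades this to equality of types. The main obstacle is the reduction theory of the third paragraph---making the pseudo-division and order reduction precise and tracking the separant and initial factors (the Ritt--Raudenbush machinery); the model-theoretic steps (quantifier elimination and the type/ideal correspondence) are routine once that is in hand.
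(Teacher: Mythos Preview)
Your outline is correct and is essentially the classical argument found in Sacks, \emph{Saturated Model Theory}, pp.\ 297--298, which is exactly what the paper does here: it does not give its own proof of this proposition but simply refers the reader to Sacks. So there is no ``paper's proof'' to compare against beyond that citation; your sketch reconstructs the standard route (quantifier elimination reduces types to prime differential ideals of $K\langle x\rangle$, a minimal-rank element of a nonzero prime is irreducible with separant and initial outside the ideal, Ritt reduction then pins down the ideal, and the isomorphism $K\langle a\rangle\cong K\langle a'\rangle$ upgrades to equality of types via QE).

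One small point worth making explicit in your write-up: when you say $p$ is the minimal polynomial of $\delta^{(n)}(a)$ over $K(a,\ldots,\delta^{(n-1)}(a))$, you are silently invoking Gauss's lemma to pass from irreducibility of $p$ in $K[x,\delta x,\ldots,\delta^{(n)}x]$ to irreducibility in $K(x,\ldots,\delta^{(n-1)}x)[\delta^{(n)}x]$. This is routine but should be stated. Otherwise the argument is complete, and the identification of obstacles (the Ritt--Raudenbush reduction as the only nontrivial ingredient) is accurate.
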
     
	
	Among the types in one variable (over $\emptyset$, or over $K$), there is a unique type, obtained from the $0$ polynomial, that is  \emph{differential transcendental}.  The other types, obtained from differential polynomials of finite rank, are \emph{differential algebraic}.  
	
	\subsubsection{Types in several variables}         
	
	In general, we can determine a type in variables $(x_1,\ldots,x_n)$ by giving the type of $x_1$ (over 
	$\emptyset$), the type of $x_2$ over $x_1$, the type of $x_3$ over $(x_1,x_2)$, and so on.  To describe a type in variables $(x_1,\ldots,x_n)$, we imagine a large differentially closed field $M$ and we consider various elements and differential subfields.  The type of $x_1$ is $\lambda_{p_1}$ for some irreducible $p_1\in \mathbb{Q}\langle x_1\rangle$.  Let $K_1$ be the differential subfield of $M$ generated by $x_1$ over $\mathbb{Q}$, where $x_1$ satisfies $\lambda_{p_1}$ in $M$.  The type of $x_2$ over $K_1$ is $\lambda_{K_1,p_2}$ for some irreducible $p_2\in K_1\langle x_2\rangle$.  Let $K_2$ be the differential field generated by $x_2$ over $K_1$.  In general, given $K_i$ generated by $x_1,\ldots,x_i$, the type of $x_{i+1}$ over $K_i$ is $\lambda_{K_i,p_{i+1}}$ for some irreducible 
	$p_{i+1}\in K_i\langle x_{i+1}\rangle$, and then $K_{i+1}$ is the differential subfield of $M$ generated by $x_{i+1}$ over $K_i$.      
	
	\subsubsection{Toward strong jump inversion}
	
	Marker and R.\ Miller \cite{MM} showed that all models of $DCF_0$ admit strong jump inversion.  Our goal in this subsection is to obtain this result using our Theorem~\ref{simple.general}. In the earlier applications of Theorem \ref{simple.general}, the structures satisfied the condition of effective type completion because they were weakly $1$-saturated. Among the countable models of $DCF_0$, only the saturated one is weakly $1$-saturated.  There are $2^{\aleph_0}$ non-isomorphic countable models.  (In fact, Marker and Miller gave a method for coding an arbitrary countable graph in a model of $DCF_0$.)  We will need to show effective type-completion in some other way.  There is a lemma in \cite{MM} that does exactly this.  Since we have effective quantifier elimination, we can work with quantifier-free types. Most of our effort goes into producing a computable enumeration $R$ of the quantifier-free types realized in models of $DCF_0$.  Once we have this, we can show easily that for any model $\mathcal{A}$, $D(\mathcal{A})'$ computes an $R$-labeling of $\mathcal{A}$.  This puts us in position to apply 
	Theorem~\ref{simple.general}. 
	
	\subsubsection{Computable enumeration of types} 
	
	It may at first seem that it should be easy to produce a computable enumeration of types.  After all, the theory $DCF_0$ is decidable and all types are computable. However, T.\ Millar \cite{Millar} gave an example of a decidable theory $T$, with all types computable, such that there is no computable enumeration of all types.  So, we have some work to do.  
	
	By quantifier elimination, we can pass effectively from a quantifier-free type $\lambda(\bar{x})$ to the complete type generated by $DCF_0\cup\lambda(\bar{x})$.  In what follows, we will enumerate quantifier-free types.  We will consider realizations of the quantifier-free types in differential fields $K$ that are not differentially closed, bearing in mind that a tuple realizing $\lambda(\bar{x})$ in $K$ will realize the corresponding complete type generated by $DCF_0\cup\lambda(\bar{x})$ in any extension of $K$ to a model of $DCF_0$.  
	
	We eventually give a uniform procedure that, for a given tuple of variables $\bar{x}$, yields an enumeration of the types in $\bar{x}$.  But first, we give a procedure for a single variable $x$ in order to elucidate the relevant issues before proceeding to the full procedure.  We determine a type $\lambda(x)$ corresponding to each differential polynomial 
	$p(x)\in \mathbb{Q}\langle x\rangle$, irreducible or not.  Let $(\varphi_s)_{s\in\omega}$ be a computable list of the atomic formulas in variable $x$, in order of G\"{o}del number.  At each stage, we will have put into $\lambda(x)$ finitely many formulas, always checking consistency with $DCF_0$.    
	
	At stage $0$, we put into the type $\lambda(x)$ just the formula $p(x) = 0$, assuming that this is consistent.  We also determine the order of $p(x)$---we can do this just by inspection.  At stage $s$, we will decide $\varphi_s$, putting it or its negation into $\lambda(x)$.  If $p(x)$ is irreducible, there will be a proof of exactly one of $\varphi_s$, $\neg{\varphi_s}$ from $DCF_0$, $p(x) = 0$, and the formulas $q(x)\not= 0$, for $q(x)\in \mathbb{Q}\langle x\rangle$ of order less than that of $p(x)$.  So, we search for a proof.  Being reducible is c.e., and if $p(x)$ is reducible, we will eventually see this.     
	
	At stage $s$, we search until we either find a proof of $\pm\varphi_s$ or discover that $p(x)$ is reducible.  If we find a proof of $\varphi_s$ (or $\neg{\varphi_s}$), then we add this formula to our type, provided that it is consistent to do so.  If we find that $p(x)$ is reducible, then we just decide $\varphi_s$ so as to maintain consistency with $DCF_0$.  The procedure we have just described gives a type $\lambda$ corresponding to each $p\in\mathbb{Q}\langle x\rangle$.  If $p$ is irreducible, then $\lambda = \lambda_p$.  Thus, by considering all $p\in \mathbb{Q}\langle x\rangle$, we get all types in the variable $x$.  
	
	A type in one variable corresponded to a differential polynomial $p(x)$ over $\mathbb{Q}$.  Intuitively, we'd like to enumerate types in $n$ variables using all $n$-tuple of polynomials, according to the pattern described in subsection 3.5.6.  Unfortunately, since the fields themselves depend on the polynomials in the tuple, it is not even clear if a potential polynomial would make sense; one of its coefficients might actually be undefined.  Therefore, our enumeration construction takes these obstacles into account with a more formal approach.  A type in $n$ variables will correspond to an $n$-tuple of formal differential polynomials $p_1(x_1),\ldots,p_n(x_n)$.  Here $p_1(x_1)$ is an actual differential polynomial with coefficients in $\mathbb{Q}$.  For $i \geq 1$, $p_{i+1}(x_{i+1})$ looks like a differential polynomial, but the coefficients come from a set $K_i^F$ of formal names for possible elements of a differential field generated by elements $x_1,\ldots,x_i$.  We say more about these formal names below.  We define the sets $K_i^F$ and $K_i^F\langle x_{i+1}\rangle$ by induction on $i$.   
	
	The many lemmas below allow us to prove Proposition \ref{enumeration}, the computable enumeration of types, from the basic definitions and results in \cite{Sacks}. In personal correspondence, Marker and Pillay described less elementary but more efficient ways of deriving this result using more modern machinery about differentially closed fields.     
	
	\begin{defn}\
		
		\begin{enumerate}
			
			\item  $K_0^F = \mathbb{Q}$, and $K_0^F\langle x_1\rangle = \mathbb{Q}\langle x_1\rangle$,
			
			\item  $K_i^F\langle x_{i+1}\rangle$ is the set of formal expressions that look like differential polynomials in the variable $x_{i+1}$ but have coefficients in $K_i^F$ as opposed to a well-defined differential field,  
			
			\item  $K_{i+1}^F$ consists of the expressions $\frac{r(x_{i+1})}{s(x_{i+1})}$, where $r,s\in K_i^F\langle x_{i+1}\rangle$.  
			
		\end{enumerate}
		
	\end{defn}    
	
	\begin{lem}
		\label{lem3.6}
		
		Uniformly in $n$, we can enumerate the $n$-tuples $p_1(x_1),\ldots,p_n(x_n)$, where $p_{i+1}(x_{i+1})\in K_i^F\langle x_{i+1}\rangle$. 
		
	\end{lem}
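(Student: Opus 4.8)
The plan is to treat every object in the tower as a finite formal term in a single fixed finite alphabet and to run the inductive definition of the $K_i^F$ as a uniform parsing-and-listing procedure. I would fix an alphabet containing symbols for the rational constants (coded, say, as pairs of integers), the variables $x_1, x_2, \ldots$, the derivation $\delta$, the ring operations $+,-,\cdot$, the division bar $/$, exponent markers, and parentheses. Since a differential polynomial is a finite algebraic polynomial in $x_{i+1},\delta(x_{i+1}),\ldots,\delta^{(m)}(x_{i+1})$ for some finite $m$, every element of $K_i^F$ and of $K_i^F\langle x_{i+1}\rangle$ is a finite string over this alphabet, so all of these objects sit inside $\omega$ under a fixed G\"odel coding.

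First I would show, by induction on $i$, that membership in $K_i^F$ and in $K_i^F\langle x_{i+1}\rangle$ is decidable, uniformly in $i$. The base case is immediate, since $K_0^F = \mathbb{Q}$ and $K_0^F\langle x_1\rangle = \mathbb{Q}\langle x_1\rangle$ have obvious decidable codings. For the inductive step, a string codes an element of $K_i^F\langle x_{i+1}\rangle$ iff it parses as a finite sum of monomials, each monomial being a code for a coefficient from $K_i^F$ times a product of powers of $x_{i+1},\delta(x_{i+1}),\ldots,\delta^{(m)}(x_{i+1})$ for some finite $m$; this check reduces to finitely many membership tests in $K_i^F$, which are decidable by the induction hypothesis. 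Likewise a string codes an element of $K_{i+1}^F$ iff it parses as $r/s$ with $r,s$ both codes of elements of $K_i^F\langle x_{i+1}\rangle$. Because the parser is a fixed algorithm whose only appeal to level $i$ is through these membership tests, the decision procedure at level $i+1$ is obtained from that at level $i$ by a single fixed transformation; by primitive recursion on indices (equivalently, since the given $i$ bounds the recursion depth of one parser), the predicates are decidable uniformly in $i$.

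Given uniform decidability, I would enumerate $K_i^F\langle x_{i+1}\rangle$ by listing the natural numbers in increasing order and retaining exactly those codes $w$ for which the uniform test ``$w\in K_i^F\langle x_{i+1}\rangle$'' succeeds; this produces, uniformly in $i$, a computable listing of $K_i^F\langle x_{i+1}\rangle$. Finally, to enumerate the $n$-tuples $p_1(x_1),\ldots,p_n(x_n)$ with $p_{i+1}(x_{i+1})\in K_i^F\langle x_{i+1}\rangle$, I would dovetail the component listings for the levels $i = 0,\ldots,n-1$ and form the coordinatewise product. Since each component listing is obtained uniformly in $i$ and $n$ bounds the number of coordinates, this product is computable uniformly in $n$, as required.

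The main obstacle is the bookkeeping of uniformity: one must ensure that the decision (hence listing) procedure at level $i$ is produced from that at level $i-1$ by one fixed transformation, so that the entire family is computable \emph{in} $i$ rather than merely computable for each fixed $i$. There is no analytic content here, since the $K_i^F$ and $K_i^F\langle x_{i+1}\rangle$ are purely syntactic objects built over $\mathbb{Q}$ by the finitary operations of forming formal polynomials and formal fractions; once the recursion on codes is set up carefully, uniformity is automatic.
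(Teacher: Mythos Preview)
Your proposal is correct and takes essentially the same approach as the paper: both argue by induction on $i$ that the sets $K_i^F$ and $K_i^F\langle x_{i+1}\rangle$ are uniformly computable (the paper phrases this as obtaining computable indices via a fixed effective transformation from level $i$ to level $i+1$, while you spell out the parsing that makes membership decidable), and then both take the finite product over $i<n$. Your version is simply a more detailed rendering of the paper's two-sentence sketch.
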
 
	
	\begin{proof}
		
		The set $K_0^F$ is a fixed computable set with computable index, and there is a uniform, effective procedure to construct $K_i^F \langle x_{i+1} \rangle$ from $K_i^F$ and $K_{i+1}^F$ from $K_i^F \langle x_{i+1} \rangle$.  Therefore, there is a single, computable function that gives computable indices for all of these sets.  Then there is computable function that, given $n$, finds a computable index of $K_0^F \langle x_1 \rangle \times K_1^F \langle x_2 \rangle \times \cdots \times K_{n-1}^F \langle x_n \rangle$. 
	\end{proof}
	
	Given an $n$-tuple of formal differential polynomials $p_1,\ldots,p_n$ as above, we will obtain a type 
	$\lambda(x_1,\ldots,x_n)$ by producing a sequence of differential fields $K_0,\ldots,K_n$, where $K_0 = \mathbb{Q}$, and $K_{i+1}$ is generated over $K_i$ by an element $x_{i+1}$ satisfying a chosen type $\lambda_{i+1}$ that depends on $p_{i+1}$.  In the end, $K_n$ will be generated by $x_1,\ldots,x_n$, and $\lambda(x_1,\ldots,x_n)$ will be the type realized by $x_1,\ldots,x_n$ that generates $K_n$.  We give several lemmas.   
	
	\begin{lem}
		\label{lem3.7} 
		
		There is a uniform effective procedure that, given a differential field $K$ and a type $\lambda(x)$ over $K$, yields a differential field $K'\supseteq K$ that is generated over $K$ by an element $x$ realizing $\lambda$.
		
	\end{lem}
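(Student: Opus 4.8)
The plan is to produce $K'$ as the differential field generated over $K$ by a root of the irreducible differential polynomial attached to $\lambda$, using the classical construction of differential-algebraic (or differential-transcendental) extensions, and then to check that every step is effective and uniform. First I would extract from $\lambda$ the irreducible $p(x)\in K\langle x\rangle$ that determines it, together with its order $n$ (with the convention $n=\infty$, $p=0$, for the differential transcendental type): by the classification of types over $K$ recalled above, $\lambda$ contains $p(x)=0$ for a unique such $p$ (up to associates) together with the inequations $q(x)\neq 0$ for all $q$ of order $<n$, so a search through $\lambda$ locates $p$ and $n$ effectively.

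Next I would build the underlying field, writing $x_i$ for what is to become $\delta^{(i)}(x)$. In the transcendental case I take $K'=K(x_0,x_1,x_2,\ldots)$, the rational function field over $K$ in countably many indeterminates. In the differential-algebraic case, where $p$ has finite order $n\geq 0$, I regard $x_0,\ldots,x_{n-1}$ as algebraically independent transcendentals over $K$ and set $K'=K(x_0,\ldots,x_{n-1})[x_n]/(p)$, viewing $p$ as a polynomial in $x_n$ over the field $K(x_0,\ldots,x_{n-1})$; this quotient is a field because $p$, being irreducible in $K[x_0,\ldots,x_n]$ and of positive degree in $x_n$, is irreducible over the fraction field $K(x_0,\ldots,x_{n-1})$ by Gauss's Lemma. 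I then define a derivation on $K'$ extending $\delta$ on $K$ by the rule $\delta(x_i)=x_{i+1}$ for $i<n$ and, in the algebraic case, $\delta(x_n)=-S_p^{-1}\big(p^\delta+\sum_{i<n}\tfrac{\partial p}{\partial x_i}\,x_{i+1}\big)$, where $S_p=\tfrac{\partial p}{\partial x_n}$ is the separant of $p$ and $p^\delta$ is obtained by applying $\delta$ to the coefficients of $p$; this is exactly the value forced by $\delta(p(x))=0$. The derivation is extended to all of $K'$ by the sum, product, and quotient rules. With $x=x_0$, the field $K'$ is generated over $K$ by $x$, and $x$ realizes $\lambda$: by construction $p(x)=0$, while the algebraic independence of $x_0,\ldots,x_{n-1}$ over $K$ gives $q(x)\neq 0$ for every nonzero $q$ of order $<n$ (and for every nonzero $q$ of finite order in the transcendental case).

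The step I expect to be the main obstacle is verifying that this derivation is well-defined on the quotient, that is, that it respects the relation $p=0$ and hence descends from $K(x_0,\ldots,x_{n-1})[x_n]$ to $K'$. This is where characteristic $0$ is essential: since $p$ is irreducible of positive degree in $x_n$, its formal derivative $S_p=\partial p/\partial x_n$ has strictly smaller degree in $x_n$ and so is nonzero modulo $p$, which makes $S_p$ invertible in $K'$ and makes the displayed definition of $\delta(x_n)$ legitimate; the standard verification that the resulting map is a derivation on the quotient then goes through (see \cite{Sacks}). Finally I would confirm uniform effectivity: extracting $(p,n)$ from $\lambda$, presenting the rational function field and the quotient, performing the polynomial arithmetic and the inversions modulo $p$ needed for the field operations (via the extended Euclidean algorithm over $K(x_0,\ldots,x_{n-1})$), and computing $S_p^{-1}$ together with the values $\delta(x_n)$, are all computable uniformly from a presentation of $K$ and the data of $\lambda$, so the whole passage $K,\lambda\mapsto K'$ is given by a single effective procedure.
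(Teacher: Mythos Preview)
Your algebraic construction of $K'$ is correct once you have the irreducible polynomial $p$ and its order $n$ in hand, but the step ``a search through $\lambda$ locates $p$ and $n$ effectively'' is a genuine gap. Given only a decision procedure for membership in $\lambda$, you cannot effectively determine whether $\lambda$ is differential transcendental: that would require verifying $q(x)\neq 0\in\lambda$ for \emph{every} nonzero $q\in K\langle x\rangle$, an infinite check. Your procedure dovetails through polynomials looking for one that vanishes; if $\lambda$ is the transcendental type, this search never halts, and there is no finite stage at which you are entitled to switch to the transcendental construction $K(x_0,x_1,\ldots)$. So the case split you rely on is not computably decidable from $\lambda$, and the two branches produce $K'$ with genuinely different presentations, so you cannot simply run both in parallel.

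The paper avoids this by never extracting $p$ at all. It takes as the universe of $K'$ a set of formal names $\frac{r(x)}{s(x)}$ with $r,s\in K\langle x\rangle$, and uses $\lambda$ only to answer two kinds of single membership queries: whether $s(x)=0$ (to discard ill-formed quotients) and whether $r(x)s_1(x)=r_1(x)s(x)$ (to identify equal quotients and pick canonical representatives). Sums, products, and derivatives are computed formally and then normalized via the same queries. This works uniformly whether $\lambda$ is algebraic or transcendental, because every step is a single question to the oracle $\lambda$ rather than a global structural determination. If you want to salvage your approach, you would need to present $K'$ in a way that does not depend on knowing $n$ in advance; but doing so essentially reproduces the quotient-of-names construction.
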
 
	
	\begin{proof}  Uniformly in $K$, we construct a computable, formal set $N_K$ that consists of names of the form $\frac{r(x)}{s(x)}$, where $r(x), s(x) \in K \langle x \rangle$.  Next, we define the universe of $K'$ from $N_K$ and $\lambda(x)$ by induction:
		
		\begin{enumerate}
			\item at step 1, consider the first element of $N_K$, which is a formal expression of the form $\frac{r(x)}{s(x)}$.  We use $\lambda(x)$ to determine if $s(x) = 0$.  If so, then we do not include $\frac{r(x)}{s(x)}$ in the universe of $K'$; otherwise we do.
			
			\item at step $n + 1$, consider the $(n + 1)^{st}$ element of $N_K$, which is a formal expression of the form $\frac{r(x)}{s(x)}$, where $r(x), s(x) \in K \langle x \rangle$.  We use $\lambda(x)$ to determine if $s(x) = 0$.  If so, then we do not include $\frac{r(x)}{s(x)}$ in the universe of $K'$.  If not, then we use $\lambda(x)$ and simple ``cross multiplication'' to determine if $\frac{r(x)}{s(x)}$ is equal to any $\frac{r_1(x)}{s_1(x)}$ that we included in $K'$ an earlier step.  If so, then we do not include $\frac{r(x)}{s(x)}$ in the universe of $K'$; otherwise we do.
			
		\end{enumerate}
		
		\noindent Uniformly in $K$ and $\lambda(x)$, the above procedure computably enumerates the elements of the universe of $K'$ in order; therefore, the universe of $K'$ is uniformly computable in $K$ and $\lambda(x)$.  
		
		Finally, to define the constants and operations on the universe of $K'$, we first use $\lambda(x)$ to identify element in the universe $K'$ that is equal to $\frac{0_K}{1_K}$ and the element equal to $\frac{1_K}{1_K}$.  Next, to calculate a sum or product of two elements $\frac{r(x)}{s(x)}$ and $\frac{r_1(x)}{s_1(x)}$ in $K'$, or to calculate $\delta \left( \frac{r(x)}{s(x)} \right)$, we add, multiply, or differentiate formally, and then we use $\lambda(x)$ to determine what element in the universe of $K'$ the formal expression is equal to.  The definitions of these operations are uniformly computable from $K'$ and $\lambda$, and thus ultimately from $K$ and $\lambda$.  
	\end{proof}
	
	Given an actual differential field $K_i$, generated by elements $x_1,\ldots,x_i$, some names from $K_i^F$ have a definite value in $K_i$, while others do not.  Recall that the names are quotients.  We do not get a value if the denominator is $0$.  
	
	\begin{lem}
		\label{lem3.8}  
		
		There is a uniform effective procedure that, given a differential field $K_i$ generated by elements $x_1,\ldots,x_i$, and an element $f\in K^F_i$, determines whether $f$ makes sense, and if so, assigns to $f$ a definite value in $K_i$.
		
	\end{lem}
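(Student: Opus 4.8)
The plan is to proceed by induction on $i$, following the recursive definition of the sets $K_i^F$. Recall that a name $f \in K_i^F$ has the form $\frac{r(x_i)}{s(x_i)}$, where $r, s \in K_{i-1}^F\langle x_i\rangle$ are formal differential polynomials in $x_i$ whose coefficients are themselves names from $K_{i-1}^F$. The base case is $K_0^F = \mathbb{Q}$: every element makes sense, and its value in $K_0 = \mathbb{Q}$ is itself. I take as a standing assumption that each $K_i$ produced by Lemma~\ref{lem3.7} comes equipped with a computable presentation, so that the field operations, the derivation, and in particular the equality relation on $K_i$ are all computable, uniformly in an index for $K_i$.

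For the inductive step, suppose I can already evaluate names from $K_{i-1}^F$ in $K_{i-1}$. Given $f = \frac{r(x_i)}{s(x_i)} \in K_i^F$, I first run the procedure for $K_{i-1}$ on each of the finitely many coefficients of $r$ and of $s$; these coefficients lie in $K_{i-1}^F$. If any coefficient fails to make sense, I declare that $f$ makes no sense and halt. Otherwise, substituting the computed values---which are genuine elements of $K_{i-1}$, and hence of $K_i$, since $K_{i-1}$ is a differential subfield of $K_i$---turns $r$ and $s$ into actual differential polynomials over $K_{i-1}$. Evaluating these at the actual generator $x_i \in K_i$ using the computable operations of $K_i$ yields concrete elements $r(x_i), s(x_i) \in K_i$.

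It then remains to decide whether the denominator vanishes. Because equality in $K_i$ is decidable, I can test whether $s(x_i) = 0_{K_i}$. If it does, then $f$ does not make sense; if it does not, then $f$ makes sense, and I assign it the value $r(x_i) \cdot s(x_i)^{-1}$, computed using field division in $K_i$. Each step is effective, and the recursion terminates after at most $i$ levels, bottoming out at $\mathbb{Q}$; tracing indices through Lemma~\ref{lem3.6} and Lemma~\ref{lem3.7} shows that the whole procedure is uniform in $K_i$ and in $f$.

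The main point requiring care is not any single computation but the bookkeeping of the nested recursion, together with the observation that the recursively computed coefficient-values really do live inside $K_i$ (via the tower $\mathbb{Q} = K_0 \subseteq K_1 \subseteq \cdots \subseteq K_i$), so that forming $r(x_i)$ and $s(x_i)$ makes sense as an operation in the single computable field $K_i$. Once that is granted, the decidability of the test $s(x_i) = 0$---which rests on $K_i$ being a \emph{computable} differential field, as guaranteed by Lemma~\ref{lem3.7}---is what lets me cleanly separate the names that make sense from those that do not.
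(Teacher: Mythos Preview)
Your proof is correct and follows essentially the same approach as the paper: both unwind the name $f$ through the tower $K_0^F \subseteq \cdots \subseteq K_i^F$, evaluating coefficients level by level and checking for vanishing denominators. The paper phrases this by first collecting all nested sub-names into a finite set $S$ and then processing $S$ by induction on the level $j$, whereas you phrase the same computation as a direct recursion on $i$; your version is slightly more explicit about the denominator test, but the underlying algorithm is the same.
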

	
	\begin{proof}
		
		We first form a finite set $S$ of names such that $f\in S$, and if $g\in S\cap K^F_j$, for $0 < j\leq i$, and $h$ is a coefficient from the numerator or denominator of $g$, then $h\in S$.  We form $K_j$ for $0\leq j\leq i$.  We then proceed by induction on $j$ to determine for all $g\in S\cap K_j^F$, whether $g$ has value in $K_j$, and if so, to assign the value.  Then $f$ has a value iff all elements of $S$ have a value.    
	\end{proof}    
	
	\begin{lem}
		\label{lem3.9} 
		
		There is a uniform effective procedure that, given $p\in K^F_i\langle x_{i+1}\rangle$ and a differential field $K_i$ generated by elements $x_1,\ldots,x_i$, determines whether $p$ makes sense (i.e., whether the coefficients all have value in $K_i$), and if so, identifies $p$ with an element of $K_i\langle x_{i+1}\rangle$.     
		
	\end{lem}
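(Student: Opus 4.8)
The plan is to reduce this directly to the previous lemma, Lemma~\ref{lem3.8}, which already handles a single formal coefficient. The key observation is that a formal differential polynomial $p\in K_i^F\langle x_{i+1}\rangle$ is, by definition, a finite algebraic polynomial in the indeterminates $x_{i+1},\delta(x_{i+1}),\ldots,\delta^{(m)}(x_{i+1})$ for some $m$, whose coefficients are drawn from $K_i^F$. In particular, $p$ has only finitely many coefficients, and we can read them off effectively by inspecting the formal expression.

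First I would parse $p$ to extract its finite list of coefficients $c_1,\ldots,c_k\in K_i^F$, together with the monomials in $x_{i+1}$ and its derivatives that they multiply. This is a purely syntactic, computable operation, uniform in $p$. Next, for each coefficient $c_j$, I would apply the uniform effective procedure of Lemma~\ref{lem3.8} to the field $K_i$ and the name $c_j$. This determines whether $c_j$ makes sense in $K_i$, and if so returns its definite value $v_j\in K_i$. Since there are only finitely many coefficients, this terminates after finitely many applications. We declare that $p$ makes sense precisely when every $c_j$ makes sense; by the definition of what it means for $p$ to make sense, this is exactly the correct criterion.

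Finally, if all $c_j$ make sense, I would form the element of $K_i\langle x_{i+1}\rangle$ obtained from $p$ by replacing each formal coefficient $c_j$ with its value $v_j\in K_i$, leaving the monomials unchanged. Because the operations of $K_i$ are computable---the field $K_i$ is built by iterating the construction of Lemma~\ref{lem3.7}---we can normalize the result, combining like terms and discarding any that become zero, to obtain a genuine element of $K_i\langle x_{i+1}\rangle$. The uniformity of the whole procedure follows from the uniformity of Lemma~\ref{lem3.8} together with the effectiveness of the parsing and normalization steps.

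I do not expect any genuine obstacle here: essentially the entire content lies in Lemma~\ref{lem3.8}, and the only new ingredient is the trivial remark that a differential polynomial has finitely many coefficients, so the single-coefficient procedure can be applied termwise and the answers assembled.
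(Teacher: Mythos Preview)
Your proposal is correct and follows essentially the same approach as the paper: extract the finitely many coefficients of $p$, apply Lemma~\ref{lem3.8} to each one, and if all make sense, substitute their values to obtain the corresponding element of $K_i\langle x_{i+1}\rangle$. The paper's proof is slightly terser but otherwise identical in content.
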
 
	
	\begin{proof} Given the $p\in K^F_i\langle x_{i+1}\rangle$, we simply identify its coefficients as elements of $K^F_i$.  Then we apply the previous lemma to each of the co-efficients individually.  If all of the co-efficients make sense, then we assign each of them a definite value in $K_i$ and then construct the corresponding element of $K_i\langle x_{i+1}\rangle$.  Otherwise, if at least one of the coefficients does not make sense, $p$ does not make sense. 
	\end{proof}
	
	\begin{lem}
		\label{lem3.10}
		
		There is a uniform effective procedure that, given a differential field $K$ and a differential polynomial $p(x)$ over $K$, enumerates the differential polynomials $q(x)$ of order lower than that of $p(x)$.
		
	\end{lem}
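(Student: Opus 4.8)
The plan is to first compute the order of $p(x)$ and then list all differential polynomials built from the appropriate initial segment of the derivatives $x,\delta(x),\delta^{(2)}(x),\ldots$ with coefficients drawn from $K$. Throughout, I use that $K$ is presented as a \emph{computable} differential field, as guaranteed by the construction in Lemma~\ref{lem3.7}, so that equality of elements of $K$ is decidable uniformly in the index for $K$. Likewise, $p(x)$ is an actual element of $K\langle x\rangle$, so its coefficients are genuine elements of $K$ on which this decision procedure may be run.

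First I would determine the order $n$ of $p(x)$. Reading off from the given expression for $p$ a bound $m$ on the derivatives that occur, I view $p(x)$ as a formal algebraic polynomial in the indeterminates $x,\delta(x),\ldots,\delta^{(m)}(x)$ and inspect the coefficient of each monomial. Using decidability of equality in $K$, I determine which coefficients are nonzero and set $n$ to be the greatest index such that $\delta^{(n)}(x)$ occurs with a nonzero coefficient. If every coefficient is zero, then $p(x)$ is the $0$ polynomial, of order $\infty$.

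Next I split into two cases according to $n$. If $n$ is finite, then the differential polynomials of order lower than $p(x)$ are exactly the nonzero elements of $K[x,\delta(x),\ldots,\delta^{(n-1)}(x)]$, that is, the algebraic polynomials in these fixed finitely many indeterminates with coefficients in $K$. Since $K$ has a computable universe, I enumerate all such polynomials by dovetailing over the degree and over finite choices of coefficients from $K$, discarding the $0$ polynomial (whose order is $\infty$, not lower than $n$). If instead $n=\infty$, so $p(x)=0$, then ``order lower than $\infty$'' means finite order, and I enumerate all nonzero elements of $K\langle x\rangle$; since each such polynomial uses only finitely many derivatives, a standard dovetailing over the number of derivatives employed, the degree, and the coefficients from the computable set $K$ produces them all.

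Every step is uniform in a computable index for $K$ and in the given expression for $p(x)$, so the procedure is uniform as required. The only point demanding care is computing the order $n$ in the first step: this is precisely where decidability of equality in $K$ is invoked, and it is the sole nonroutine input, resting on $K$ being computable via Lemma~\ref{lem3.7}. Once $n$ is fixed, the remaining enumeration is an elementary listing of polynomials over a computable coefficient field and presents no further difficulty.
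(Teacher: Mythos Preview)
Your proof is correct and follows essentially the same approach as the paper: compute the order $n$ of $p(x)$ by inspecting which coefficients are nonzero (using decidability of equality in $K$), then list the algebraic polynomials in $K[x,\delta(x),\ldots,\delta^{(n-1)}(x)]$. You are somewhat more careful than the paper in explicitly treating the case $p(x)=0$ (order $\infty$) and in excluding the zero polynomial from the enumeration, but these are refinements of the same argument rather than a different route.
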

	
	\begin{proof}
		
		First, there is a computable procedure, uniform in $K$, that computes orders of $p(x) \in K\langle x \rangle$.  Namely, assuming $p(x)$ is written where formal ``like terms'' already are combined, then the procedure looks for the term with the highest derivative $\delta^{n}(x)$ appearing as a factor, where the coefficient in $K$ for at least one such term is non-zero.  Then, uniformly in $K$ and $p(x)$, there is an effective procedure that lists all algebraic polynomials in $K[x, \delta(x), \ldots, \delta^{n-1}(x)]$.
	\end{proof}
	
	\begin{lem}
		\label{lem3.11}
		
		There is a uniform effective procedure that, given a differential field $K$ and a differential polynomial $p(x)$ over $K$, enumerates the proofs of formulas $\varphi(x)$ (with parameters in $K$) from $DCF_0$, $D(K)$, $p(x) = 0$, and $q(x)\not= 0$, for $q$ of lower order.
		
	\end{lem}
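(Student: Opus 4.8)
The plan is to build the desired proof-enumeration procedure by composing the effective tools already established in Lemmas \ref{lem3.7}--\ref{lem3.10}, with the decidability of $DCF_0$ (via Blum's axioms) supplying the crucial ingredient. First I would observe that the collection of formulas we must enumerate proofs \emph{of} is itself uniformly effective: given $K$ (presented by its atomic diagram $D(K)$) and $p(x) \in K\langle x\rangle$, Lemma \ref{lem3.10} enumerates all differential polynomials $q(x)$ of order lower than that of $p(x)$, so the side conditions ``$q(x) \neq 0$ for $q$ of lower order'' form a uniformly c.e.\ (in fact, computably enumerable in $D(K)$) set of hypotheses. Together with the single equation $p(x) = 0$, the finitely-or-infinitely many formulas from $D(K)$, and the computable axiom set $DCF_0$, the whole hypothesis set is c.e.\ uniformly in $K$ and $p(x)$.

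The heart of the argument is then a routine appeal to the effective completeness/enumerability of first-order proofs. Since the axioms of $DCF_0$ are computable (Blum gave a computable axiomatization), since $D(K)$ is available from the oracle, and since the lower-order inequations are uniformly c.e., I would form a single uniformly c.e.\ theory $\Gamma_{K,p}$ consisting of $DCF_0$, $D(K)$, $p(x) = 0$, and all $q(x) \neq 0$ for $q$ of order below that of $p$. A standard proof-search procedure enumerates, in a manner uniform in any c.e.\ index for the hypothesis set, all formulas $\varphi(x)$ (in the language expanded by names for the parameters of $K$) together with their first-order derivations from $\Gamma_{K,p}$. The uniformity is inherited at each stage: to enumerate proofs using only finitely many hypotheses seen so far, I dovetail the enumeration of $\Gamma_{K,p}$ with the enumeration of candidate proofs, outputting a proof once every hypothesis line it invokes has appeared in $\Gamma_{K,p}$.

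The one point deserving care---and the step I expect to be the main obstacle---is handling the order computation and the parameter names correctly and \emph{uniformly}. Computing the order of $p(x)$ requires recognizing which coefficients in $K$ are nonzero, which is available since $K$ is presented with its atomic diagram; Lemma \ref{lem3.10} already packages this, so I would simply invoke it rather than redo the work. The parameters from $K$ enter proofs through $D(K)$, so the proof system must work in the language with constants for the elements of $K$; here I rely on $D(K)$ being an oracle (or c.e.\ set) so that each atomic fact about parameters used in a proof is itself enumerated as a hypothesis. Once these bookkeeping issues are arranged, effective completeness of first-order logic guarantees that every consequence $\varphi(x)$ of $\Gamma_{K,p}$ eventually appears with a proof, and the entire procedure is uniform in $K$ and $p(x)$, as required.
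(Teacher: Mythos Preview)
Your proposal is correct and follows essentially the same approach as the paper: invoke Lemma~\ref{lem3.10} to enumerate the lower-order polynomials $q(x)$, assemble the resulting c.e.\ hypothesis set $DCF_0 \cup D(K) \cup \{p(x)=0\} \cup \{q(x)\neq 0\}$, and then run a standard proof search. The paper's own proof is a two-sentence version of exactly this; your additional remarks on dovetailing, parameter names, and order computation are sound elaborations but not substantively different.
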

	
	\begin{proof}
		
		By Lemma \ref{lem3.10}, we can enumerate the polynomials $q(x)$ of lower order, so we can enumerate the axioms to use in our proofs.  Then we can enumerate proofs from these axioms of formulas of the kind we are interested in.      
	\end{proof}
	
	In Lemma \ref{lem3.11}, we did not assume that $p(x)$ is irreducible.  So, the set of axioms may not generate a consistent, complete type over $K$.  
	
	\begin{lem}
		\label{lem3.12}
		
		There is a uniform effective procedure that, given a differential field $K$, enumerates the reducible differential polynomials $p(x)$ over $K$.
		
	\end{lem}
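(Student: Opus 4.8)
The plan is to exploit the fact that \emph{reducibility is a properly $\Sigma^0_1$ property, witnessed by an explicit factorization}, together with the fact that the arithmetic of the differential field $K$ is computable uniformly in a computable index for $K$. Recall that a differential polynomial $p(x)\in K\langle x\rangle$ of order $n$ is, by definition, nothing but an ordinary algebraic polynomial in the indeterminates $x,\delta(x),\ldots,\delta^{(n)}(x)$ with coefficients in $K$, and that $p$ is reducible precisely when it admits a nontrivial factorization into two non-units in this polynomial ring. Since the units of $K[x,\delta(x),\ldots,\delta^{(n)}(x)]$ are exactly the nonzero elements of $K$, and a reducible polynomial is necessarily a nonzero non-unit, $p$ is reducible if and only if $p=q\cdot r$ for some differential polynomials $q,r$ over $K$ that are both \emph{non-constant}, i.e., each genuinely involves at least one of the jet variables $x,\delta(x),\ldots$. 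Requiring both factors to be non-constant also correctly excludes the $0$ polynomial and the nonzero constants, in agreement with the paper's convention that $0$ is irreducible.

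First I would note that, because $K$ is a computable differential field with computable field operations (as produced by Lemma~\ref{lem3.7}), the formal multiplication of two differential polynomials over $K$ is computable uniformly in $K$: to form $q\cdot r$ one multiplies out in the jet variables $x,\delta(x),\delta^{(2)}(x),\ldots$, treated as independent algebraic indeterminates, and each coefficient of the product is a finite sum of products of coefficients of $q$ and $r$, hence is obtained using the operations of $K$. Moreover, the differential polynomials over $K$ can themselves be enumerated uniformly in $K$, since each is specified by a finite list of (jet-variable-monomial, coefficient) pairs with coefficients drawn from the computable domain of $K$; and since equality in $K$ is decidable, each computed product can be put into a canonical normal form, with like terms combined and zero coefficients discarded.

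The enumeration procedure then dovetails over all orders $n$ and over all pairs $(q,r)$ of non-constant differential polynomials over $K$; for each such pair it computes the normal form of $q\cdot r$ and lists it. Correctness is immediate in both directions. Every polynomial that is listed is, by construction, a product of two non-units and hence reducible. Conversely, if $p$ is reducible, then, being a nonzero non-unit, it factors as $p=q\cdot r$ with $q,r$ both non-constant; this pair is eventually reached in the dovetailing, so $p$ is eventually listed. Uniformity in $K$ holds because every step above is carried out uniformly in the computable index for $K$.

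I do not expect a serious obstacle here: the lemma is essentially the standard observation that reducibility over a computable field is $\Sigma^0_1$. The only points that require care are bookkeeping rather than mathematics, namely restricting the factors to be non-constant so that the output matches the paper's convention (so that $0$ and the nonzero constants are treated as irreducible and are \emph{not} enumerated), and confirming that the coefficient arithmetic and the normalization of products are performed uniformly in $K$, which is guaranteed by the computability of the field operations established in the preceding lemmas.
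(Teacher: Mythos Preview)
Your proposal is correct and follows essentially the same idea as the paper: reducibility is witnessed by an explicit nontrivial factorization $p=q\cdot r$ with both factors non-constant, and such witnesses can be searched for uniformly in $K$. The only cosmetic difference is that the paper phrases this as a semi-decision procedure for each fixed $p$ (searching $D(K)$ for a formula $r(x)\cdot s(x)=p(x)$), whereas you enumerate all products of non-constant pairs directly; these are trivially equivalent formulations of the same $\Sigma^0_1$ observation.
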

	
	\begin{proof}
		
		For a given $p(x)$ we enumerate $D(K)$, searching for a formula of form $r(x)\cdot s(x) = p(x)$, where $r(x)$ and $s(x)$ are differential polynomials over $K$, both non-constant.  The search halts iff $p(x)$ is reducible.           
	\end{proof}
	
	\begin{lem}
		\label{added1}
		
		Let $K$ be a differential field.  For any tuple $\bar{k}$ in $K$, $DCF_0$ together with the quantifier-free type of $\bar{k}$ generates a complete type that would be realized by $\bar{k}$ in any extension of $K$ to a model of $DCF_0$.
		
	\end{lem}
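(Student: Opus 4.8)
The plan is to derive this directly from effective quantifier elimination for $DCF_0$ (A.\ Robinson), together with the fact, noted above, that every differential field of characteristic $0$ embeds into a model of $DCF_0$. Write $\lambda(\bar{x})$ for the quantifier-free type of $\bar{k}$, i.e.\ the set of all quantifier-free formulas $\varphi(\bar{x})$ (without parameters) such that $K\models\varphi(\bar{k})$. Since $\bar{k}$ is an actual tuple in the actual structure $K$, the set $\lambda(\bar{x})$ is a \emph{complete} quantifier-free type: for every quantifier-free $\varphi(\bar{x})$, exactly one of $\varphi$, $\neg\varphi$ lies in $\lambda(\bar{x})$.

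First I would check consistency. Because $K$ extends to some model $M\models DCF_0$, the tuple $\bar{k}$ already witnesses that $DCF_0\cup\lambda(\bar{x})$ is satisfiable. Next I would establish completeness of the generated type. Let $\psi(\bar{x})$ be an arbitrary formula. By quantifier elimination, there is a quantifier-free $\psi^{*}(\bar{x})$ with $DCF_0\vdash\forall\bar{x}\,(\psi\leftrightarrow\psi^{*})$. Since $\lambda(\bar{x})$ decides $\psi^{*}$, the theory $DCF_0\cup\lambda(\bar{x})$ proves $\psi$ or proves $\neg\psi$. Thus $DCF_0\cup\lambda(\bar{x})$ decides every formula, so it generates a complete type.

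Finally I would identify this complete type with the one realized by $\bar{k}$ in an arbitrary extension. Let $M\supseteq K$ be any model of $DCF_0$. Quantifier-free formulas are absolute between a substructure and its superstructure, so $\bar{k}$ satisfies in $M$ exactly the quantifier-free formulas in $\lambda(\bar{x})$; hence the complete type of $\bar{k}$ in $M$ contains $DCF_0\cup\lambda(\bar{x})$. Since the latter is already complete, the two coincide. In particular, the complete type realized by $\bar{k}$ does not depend on the choice of $M$, which is precisely the assertion.

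There is no serious obstacle here: the whole argument is the standard consequence of quantifier elimination, namely that a complete quantifier-free type over $\emptyset$ together with $DCF_0$ pins down a single complete type. The only point requiring care is that $\lambda(\bar{x})$ be genuinely complete as a quantifier-free type, but this is automatic, since it is the quantifier-free type of a concrete tuple in $K$.
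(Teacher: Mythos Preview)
Your proof is correct and follows essentially the same approach as the paper: both arguments rest on quantifier elimination for $DCF_0$ together with the existence of an extension of $K$ to a model of $DCF_0$. The paper's version is terser---it simply takes $M\supseteq K$ with $M\models DCF_0$ and observes that, by quantifier elimination, every formula true of $\bar{k}$ in $M$ is provable from $DCF_0$ together with the quantifier-free type of $\bar{k}$---while you unpack the same reasoning into separate consistency, completeness, and identification steps; but the underlying argument is the same.
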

	
	\begin{proof}
		
		Let $K\subseteq M$, where $M$ is a differentially closed field.  By quantifier elimination, any formula true of $\bar{k}$ in $M$ is proved from $DCF_0$ and the quantifier-free formulas true of $\bar{k}$.    
	\end{proof}
	
	\begin{lem}
		\label{added2}
		
		There is a uniform effective procedure for determining, for a differential field $K$ and a formula $\varphi(\bar{k},x)$ (with parameters $\bar{k}$ in $K$), whether $\varphi(\bar{k},x)$ is consistent with $DCF_0\cup D(K)$.
		
	\end{lem}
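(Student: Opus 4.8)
The plan is to reduce the consistency question to the evaluation of a single quantifier-free condition on the parameters $\bar{k}$, a condition whose truth value is decided outright by the atomic diagram $D(K)$. First I would observe that $\varphi(\bar{k},x)$ is consistent with $DCF_0\cup D(K)$ exactly when the sentence $(\exists x)\,\varphi(\bar{k},x)$ is, so it suffices to decide the latter. I would treat the parameters $\bar{k}$ as a tuple of fresh variables $\bar{y}$ and regard $(\exists x)\,\varphi(\bar{y},x)$ as a formula in the language of differential fields with free variables $\bar{y}$.

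By the effective elimination of quantifiers for $DCF_0$ (A.\ Robinson's theorem together with Blum's computable axiomatization, as recalled above), there is a uniform effective procedure producing a quantifier-free formula $\theta(\bar{y})$, with coefficients in $\mathbb{Q}$, such that $DCF_0\vdash(\forall\bar{y})\,[\,(\exists x)\,\varphi(\bar{y},x)\leftrightarrow\theta(\bar{y})\,]$. Substituting the parameters gives a quantifier-free formula $\theta(\bar{k})$ that is a Boolean combination of atomic formulas, each of which asserts the vanishing or non-vanishing of a differential polynomial in $\bar{k}$ with rational coefficients, hence names an explicit element of $K$.

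The key observation is that the consistency of $\theta(\bar{k})$ with $DCF_0\cup D(K)$ is equivalent to its plain truth in $K$. Indeed, every differential field of characteristic $0$ has a differential closure, so $DCF_0\cup D(K)$ is consistent, and every model $M$ of it contains $K$ as a differential subfield. Since quantifier-free formulas are absolute between a substructure and any extension, $M\models\theta(\bar{k})$ iff $K\models\theta(\bar{k})$. Thus $(\exists x)\,\varphi(\bar{k},x)$ is consistent with $DCF_0\cup D(K)$ precisely when $K\models\theta(\bar{k})$, and the latter is decidable uniformly: each atomic subformula of $\theta(\bar{k})$ is settled by $D(K)$, and we combine these truth values according to the Boolean structure of $\theta$.

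The resulting procedure is therefore to form $(\exists x)\,\varphi(\bar{y},x)$, compute its quantifier-free equivalent $\theta(\bar{y})$ by effective quantifier elimination, substitute $\bar{k}$, and evaluate $\theta(\bar{k})$ using $D(K)$, declaring $\varphi(\bar{k},x)$ consistent iff this evaluation yields \emph{true}; all steps are uniform in $K$ and in $\varphi$. The only point requiring care is the reduction itself, namely that consistency of a quantifier-free condition over $DCF_0\cup D(K)$ collapses to its truth value in $K$; this uses both the absoluteness of quantifier-free formulas and the existence of the differential closure to guarantee that $DCF_0\cup D(K)$ is consistent. Once this is in place, effective quantifier elimination and the decidability of atomic formulas via $D(K)$ finish the argument with no further difficulty.
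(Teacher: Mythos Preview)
Your proof is correct and follows essentially the same approach as the paper's. The paper's proof invokes Lemma~\ref{added1} (that $DCF_0$ together with the quantifier-free type $\gamma(\bar{k})$ of $\bar{k}$ in $K$ generates a complete type) and then observes that $\varphi(\bar{k},x)$ is consistent with $DCF_0\cup D(K)$ iff $(\exists x)\varphi(\bar{k},x)$ lies in that complete type; your argument unpacks this by applying effective quantifier elimination directly to $(\exists x)\varphi(\bar{y},x)$ and evaluating the resulting quantifier-free $\theta(\bar{k})$ in $K$, which is exactly how membership in that complete type is decided.
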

	
	\begin{proof}
		
		Let $\gamma(\bar{k})$ be the quantifier-free type realized by $\bar{k}$ in $K$.  By Lemma~\ref{added1}, $DCF_0\cup\gamma(\bar{k})$ generates a complete type that would be realized by $\bar{k}$ in any extension of $K$ to a model of $DCF_0$.  Then $\varphi(\bar{k},x)$ is consistent with $DCF_0\cup D(K)$ iff $(\exists x)\varphi(\bar{k},x)$ is in this type.  
	\end{proof} 
	
	\begin{lem}
		\label{lem3.13}
		
		There is a uniform effective procedure that, given a differential field $K$ and $p(x)\in K\langle x\rangle$, enumerates a type $\lambda(x)$ for $x$ over $K$.  Moreover, if $p(x)$ is irreducible, then $\lambda(x) = \lambda_{K,p}$.    
		
	\end{lem}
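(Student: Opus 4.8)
The plan is to generalize to an arbitrary $K$ the single-variable construction over $\mathbb{Q}$ sketched just before Definition~\ref{lem3.6}, now replacing the ad hoc appeals to decidability of $DCF_0$ by Lemmas~\ref{lem3.10}--\ref{lem3.12} and \ref{added2}. Fix $K$ (given by $D(K)$) and $p(x)\in K\langle x\rangle$. First I would compute the order $n$ of $p(x)$ using the order subroutine from the proof of Lemma~\ref{lem3.10}, and fix an effective list $(\varphi_s)_{s\in\omega}$ of the atomic formulas in $x$ with parameters in $K$. Let
\[
A = DCF_0 \cup D(K) \cup \{p(x)=0\} \cup \{\,q(x)\neq 0 : q\in K\langle x\rangle,\ \mathrm{ord}(q)<n\,\},
\]
whose members are enumerable by Lemma~\ref{lem3.10} and whose provable consequences are enumerable by Lemma~\ref{lem3.11}. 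The construction maintains a finite set $\Lambda_s$ of literals (atomic formulas and their negations) consistent with $DCF_0\cup D(K)$: at stage $0$ I put $p(x)=0$ into $\Lambda_0$ if it is consistent (checked via Lemma~\ref{added2}), and otherwise set $\Lambda_0=\emptyset$.

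At stage $s$ I decide $\varphi_s$ by dovetailing the proof enumeration from $A$ (Lemma~\ref{lem3.11}), seeking a proof of $\varphi_s$ or of $\neg\varphi_s$, against the global enumeration of reducible polynomials over $K$ (Lemma~\ref{lem3.12}), waiting for $p(x)$ to appear. If the proof search halts first with a proof of a literal $\psi\in\{\varphi_s,\neg\varphi_s\}$, I use Lemma~\ref{added2} to test whether $\Lambda_s\cup\{\psi\}$ is consistent with $DCF_0\cup D(K)$, adding $\psi$ if so and otherwise adding the opposite literal (one of the two is consistent, since $\Lambda_s$ is). If instead $p(x)$ is found reducible, I decide $\varphi_s$ by the same consistency test and use that test alone at all later stages. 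Each stage terminates: if $p(x)$ is irreducible the proof search must halt, since by the proposition of Sacks cited above $\lambda_{K,p}$ is a complete type and so exactly one of $\varphi_s,\neg\varphi_s$ is provable from $A$; and if $p(x)$ is reducible the reducibility enumeration eventually lists it. I output $\lambda(x)$ as the type generated by $\bigcup_s\Lambda_s$ over $DCF_0\cup D(K)$, which is decidable (hence enumerable) by effective quantifier elimination.

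For correctness, note that the $\Lambda_s$ increase and each is consistent with $DCF_0\cup D(K)$, so $\bigcup_s\Lambda_s$ is consistent and decides every atomic formula; by effective quantifier elimination it therefore generates a complete type $\lambda(x)$, uniformly in $K$ and $p(x)$. When $p(x)$ is irreducible the reducibility search never halts, so every literal is chosen by proof search; the chosen $\psi$ lies in $\lambda_{K,p}$ (being provable from $A$), and the consistency test passes automatically because $\Lambda_s\subseteq\lambda_{K,p}$, so the opposite-literal branch never fires. Hence $\bigcup_s\Lambda_s$ and $\lambda_{K,p}$ agree on all atomic formulas, and since both are complete types determined by their atomic parts, $\lambda(x)=\lambda_{K,p}$.

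The main obstacle is that the procedure cannot decide in advance whether $p(x)$ is irreducible, and when it is reducible the axiom set $A$ may be inconsistent, so raw proof search from $A$ is unreliable. The two devices that resolve this are the background reducibility search of Lemma~\ref{lem3.12}, which both guarantees that every stage halts and signals when to abandon proof search, and the consistency oracle of Lemma~\ref{added2}, applied before every commitment, which prevents an inconsistent $A$ from ever forcing a contradictory literal into $\lambda(x)$.
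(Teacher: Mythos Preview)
Your proposal is correct and follows essentially the same approach as the paper's proof: compute the order of $p(x)$, dovetail a proof search from $DCF_0\cup D(K)\cup\{p(x)=0\}\cup\{q(x)\neq 0:\mathrm{ord}(q)<\mathrm{ord}(p)\}$ against the reducibility enumeration of Lemma~\ref{lem3.12}, and use the consistency oracle of Lemma~\ref{added2} before committing each literal. The only cosmetic difference is that when a proved literal turns out to be inconsistent with what has been built so far, the paper immediately switches to ``reducible mode'' (pure consistency testing) whereas you simply add the opposite literal and keep dovetailing; both handlings are correct, and in the irreducible case neither branch fires.
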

	
	\begin{proof}
		
		We can determine the order of $p(x)$, just by inspection.  At each step, we will have put finitely many formulas into the type $\lambda(x)$, having checked consistency with $DCF_0\cup D(K)$ as in Lemma \ref{added2} (the parameters from $K$ that appear in the formulas form the relevant $\bar{k}$).   
		At step $0$, we put into $\lambda(x)$ the formula $p(x) = 0$, assuming that this is consistent.  We have a computable enumeration of the atomic formulas $\varphi_s(x)$ with parameters in $K$.  At step $s+1$, we decide $\varphi_s(x)$, adding $\varphi_s(x)$ or $\neg{\varphi_s(x)}$ to the type $\lambda(x)$. If we have already seen that $p(x)$ is reducible, then we add $\varphi_s(x)$ to the type if it is consistent to do so, and otherwise, we add $\neg{\varphi_s(x)}$.  Suppose that $p(x)$ appears to be irreducible.  Then we simultaneously search for the following:
		
		\begin{enumerate}
			
			\item  a proof of $\pm\varphi_s$ from $DCF_0\cup D(K)$, $p(x) = 0$, and formulas $q(x)\not= 0$ for $q$ of order less than that of $p$,
			
			\item  evidence that $p(x)$ is reducible over $K$.
			
		\end{enumerate}
		
		By Lemmas \ref{lem3.11} and \ref{lem3.12}, these are computable searches.  One of the searches will halt, since if $p(x)$ is irreducible, then the formulas in (1) above generate a complete type over $K$. If we find that $p(x)$ is reducible, then we proceed as above, adding $\pm\varphi_s(x)$ just to maintain consistency.  (We check consistency as in Lemma \ref{added2}.)  If we find a proof of $\varphi_s$, or $\neg{\varphi_s}$, then we add this formula to the type, provided that it is consistent to do so.  We take inconsistency as evidence that $p(x)$ is reducible, and we proceed as above.   
	\end{proof}
	
	\begin{prop}
		
		Uniformly in $n$, we can enumerate the types in $n$ variables.
		
	\end{prop}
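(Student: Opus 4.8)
The plan is to assemble the preceding lemmas into a single recursive construction that, along each formal polynomial tuple produced by Lemma~\ref{lem3.6}, builds an actual differential field realizing a type in $n$ variables. Fix $n$. By Lemma~\ref{lem3.6} we have a uniform enumeration of the $n$-tuples $p_1(x_1),\ldots,p_n(x_n)$ with $p_{i+1}(x_{i+1})\in K_i^F\langle x_{i+1}\rangle$. For each such tuple I would construct, by recursion on $i$, a chain of actual differential fields $K_0\subseteq K_1\subseteq\cdots\subseteq K_n$, where $K_0=\mathbb{Q}$ is a fixed computable field with a known index and $K_i$ is generated by $x_1,\ldots,x_i$. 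The associated type is the quantifier-free type of the generating tuple in $K_n$; since $K_n$ is a computable structure with a computable index, this type is decidable formula-by-formula. Reading off, for each tuple-index, the set of quantifier-free formulas true of $(x_1,\ldots,x_n)$ in $K_n$ yields a computable set of pairs, hence a computable enumeration $R$, and by quantifier elimination this determines the full types as well.

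The recursive step is where the earlier lemmas combine. Given a computable index for $K_i$ (generated by $x_1,\ldots,x_i$), I would first apply Lemma~\ref{lem3.9} to $p_{i+1}\in K_i^F\langle x_{i+1}\rangle$ to decide whether all of its coefficients have definite values in $K_i$, and, if so, to identify it with an actual $\tilde p_{i+1}\in K_i\langle x_{i+1}\rangle$. Lemma~\ref{lem3.13} then enumerates a type $\lambda_{i+1}(x_{i+1})$ over $K_i$ corresponding to $\tilde p_{i+1}$; since that procedure decides each atomic formula, $\lambda_{i+1}$ is in fact computable, with an index uniform in the data. Finally Lemma~\ref{lem3.7}, fed $K_i$ together with $\lambda_{i+1}$, returns a computable index for $K_{i+1}$, the field generated over $K_i$ by an element realizing $\lambda_{i+1}$; this preserves the invariant that $K_{i+1}$ is generated by $x_1,\ldots,x_{i+1}$. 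Each lemma is uniform, so threading the indices through the recursion produces $K_n$ by a single computable procedure, uniformly in $n$ and in the formal tuple.

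Two observations secure correctness. For completeness, the description of types in several variables given above shows that every type in $x_1,\ldots,x_n$ arises from a sequence in which each $p_{i+1}$ is irreducible over $K_i$; running the construction on the corresponding formal tuple, the ``moreover'' clause of Lemma~\ref{lem3.13} guarantees $\lambda_{i+1}=\lambda_{K_i,p_{i+1}}$, so the chain is built correctly and $K_n$ realizes exactly the intended type. For soundness, even when some $p_{i+1}$ is reducible, Lemma~\ref{lem3.13} still outputs a complete type consistent with $DCF_0\cup D(K_i)$, so every index yields a genuine type realized in some model; redundant indices are harmless for an enumeration.

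The step I expect to be the main obstacle is the handling of formal polynomials whose coefficients are undefined: when $p_{i+1}$ fails to make sense over $K_i$, the recursion has no actual polynomial to pass to Lemma~\ref{lem3.13}, so one must fix a uniform convention --- for instance defaulting to the differential-transcendental type given by the zero polynomial, or discarding that index entirely --- that neither breaks uniformity nor omits any genuine type. Since every genuine type already arises from a sensible irreducible tuple, any reasonable convention preserves completeness; the real care lies in keeping the field indices coherent across the recursion, so that at each stage the uniformity hypotheses of Lemmas~\ref{lem3.8}--\ref{lem3.13} remain available for the next.
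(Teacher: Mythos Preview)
Your proposal is correct and follows essentially the same route as the paper: enumerate formal polynomial tuples via Lemma~\ref{lem3.6}, then recursively build the chain $K_0\subseteq\cdots\subseteq K_n$ by alternating Lemmas~\ref{lem3.9}, \ref{lem3.13}, and \ref{lem3.7}, and read off the quantifier-free type of the generators in $K_n$. The only difference is in the convention for the case you flag as the obstacle: when $p_{i+1}$ fails to make sense over $K_i$, the paper does not default to the zero polynomial but instead builds $\lambda_{i+1}$ directly by a Henkin-style pass through the atomic formulas over $K_i$, using Lemma~\ref{added2} for the consistency checks; your suggested default would work equally well, for the reason you give.
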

	\label{prop3.14}
	
	\begin{proof}
		
		By Lemma \ref{lem3.6}, uniformly in $n$, we can enumerate the $n$-tuples of formal differential polynomials $p_1,\ldots,p_n$, where $p_1\in\mathbb{Q}\langle x_1\rangle$, $p_{i+1}\in K_i^F\langle x_{i+1}\rangle$.  The $j^{th}$ $n$-tuple of differential polynomials $p_1,\ldots,p_n$ will yield the $j^{th}$ type $\lambda(x_1,\ldots,x_n)$ in variables $x_1,\ldots,x_n$.  We describe $\lambda(x_1,\ldots,x_n)$ in terms of some differential fields $K_1,\ldots,K_n$ and types $\lambda_i(x_i)$ over $K_{i-1}$.  We note that $p_1$ is an actual differential polynomial over $K_0 = \mathbb{Q}$.  We apply Lemma \ref{lem3.13} to $p_1$ and $\mathbb{Q}$, to get a type $\lambda_1(x_1)$.  We apply Lemma \ref{lem3.7} to $\mathbb{Q}$ and $\lambda_1$ to get the differential field $K_1$ generated by $x_1$ realizing $\lambda_1$.  
		
		Now, $p_2(x_2)$ is only an element of $K^F_1\langle x_2\rangle$, where $K^F_1$ is not an actual differential field.  We apply Lemma \ref{lem3.9} to $K_1$ to determine whether $p_2(x_2)$ makes sense as a differential polynomial over $K_1$.  If not, then we generate a type $\lambda_2$ for $x_2$ over $K_1$ using $DCF_0\cup D(K_1)$ as follows.  We run through the atomic formulas $\varphi_s(x_2)$ (over $K_1$) in order, adding $\varphi_s$ if it is consistent to do so, and otherwise adding $\neg{\varphi_s}$.  We check consistency at each step as in Lemma \ref{added2}.  If $p_2$ makes sense as a differential polynomial over $K_1$, then we apply Lemma \ref{lem3.13} to get $\lambda_2$.  We then apply Lemma \ref{lem3.7} to get the differential field $K_2$ generated by $x_2$ realizing $\lambda_2$ over $K_1$.  
		
		In general, given $K_i$, we apply Lemma \ref{lem3.9} to determine whether $p_{i+1}$ makes sense as differential polynomial over $K_i$.  If not, then we generate a type $\lambda_{i+1}$, using $DCF_0\cup D(K_i)$.  If $p_{i+1}$ makes sense as a differential polynomial over $K_i$, then we apply Lemma \ref{lem3.13} to get a type $\lambda_{i+1}$ for $x_{i+1}$ over $K_i$.  From $K_i$ and $\lambda_{i+1}$, we get $K_{i+1}$ as in Lemma \ref{lem3.7}.  After finitely many steps, calculating computable indices for the differential fields $K_i$ and the types $\lambda_i$, we arrive at the differential field $K_n$.  This is generated over $\mathbb{Q}$ by the elements $x_1,\ldots,x_n$. The quantifier-free type we want is that realized by $x_1,\ldots,x_n$ in $K_n$. Of course, since $DCF_0$ has effective quantifier elimination, we then effectively compute the complete type realized by $x_1,\ldots,x_n$.    
	\end{proof} 
	
	As planned, we combine the enumerations of types in variables $x_1,\ldots,x_n$, for various $n$.   
	
	\begin{prop}
		\label{enumeration}
		
		There is a computable enumeration $R$ of all complete types realized in models of $DCF_0$.
		
	\end{prop}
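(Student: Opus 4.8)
The plan is to assemble the single enumeration $R$ from the uniform family of per-arity enumerations already produced in Proposition~\ref{prop3.14}. Fix a computable pairing $\langle\cdot,\cdot\rangle$, and for each $n$ let $\lambda^{(n)}_0,\lambda^{(n)}_1,\ldots$ be the enumeration of the complete types in the variables $x_1,\ldots,x_n$ supplied there. I would declare the type with index $i=\langle n,j\rangle$ to be $\lambda^{(n)}_j$, so that $R_i$ is the set of (G\"{o}del numbers of) formulas in $\lambda^{(n)}_j$ and $R=\{(i,k):k\in R_i\}$. Since every complete type realized in a model of $DCF_0$ is a type in some finite tuple of variables, this one indexing is meant to capture all of them, and the whole content of the proposition beyond Proposition~\ref{prop3.14} is the merging together with a re-verification of computability and completeness.

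The step I expect to require the most care, and the main obstacle, is checking that $R$ is a \emph{computable} set of pairs in the strong sense demanded by the Note following the definition of enumeration: the sets $R_i$ must be decidable uniformly in $i$, not merely c.e. Given $i=\langle n,j\rangle$ and a formula $\psi$, I recover $n$ and $j$ and run the procedure of Proposition~\ref{prop3.14} on the $j$-th formal polynomial tuple $p_1,\ldots,p_n$. The key is that the subroutine of Lemma~\ref{lem3.13} does not merely enumerate the type: it decides \emph{each} atomic formula $\varphi_s$, placing either $\varphi_s$ or $\neg\varphi_s$ into the type, so membership of any atomic formula is decidable; truth tables then decide arbitrary quantifier-free formulas; and the effective quantifier elimination for $DCF_0$ reduces an arbitrary $\psi$ to the quantifier-free case. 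Hence membership of $\psi$ in $\lambda^{(n)}_j$ is decided uniformly in $i$, and $R$ is computable. One must be careful that this remains total in every branch of the construction, including when $p_{i+1}$ is reducible or fails to make sense, but in each of those cases the procedure still decides every atomic formula outright, so totality is preserved.

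It remains to verify completeness, that every complete type realized in a model of $DCF_0$ occurs as some $R_i$. Here I would invoke the structure theory for types recalled above: a type in $x_1,\ldots,x_n$ is determined by the tower in which the type of $x_{i+1}$ over the field $K_i$ generated by $x_1,\ldots,x_i$ is $\lambda_{K_i,p_{i+1}}$ for some irreducible $p_{i+1}\in K_i\langle x_{i+1}\rangle$. Every such tower is named by some $n$-tuple of formal differential polynomials, and on irreducible inputs Lemma~\ref{lem3.13} returns exactly $\lambda_{K_i,p_{i+1}}$; the formal tuples that happen to be reducible or fail to make sense still yield genuine types realized by $x_1,\ldots,x_n$ in the actual field $K_n$, which embeds in a model of $DCF_0$, so they cause no harm. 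Since Proposition~\ref{prop3.14} ranges over all formal tuples for every $n$, and the pairing sweeps over all $n$, every complete type in finitely many variables appears among the $R_i$, which is exactly what the proposition asserts.
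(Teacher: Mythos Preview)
Your proposal is correct and follows exactly the approach the paper intends: the paper offers no formal proof of Proposition~\ref{enumeration} beyond the single sentence ``As planned, we combine the enumerations of types in variables $x_1,\ldots,x_n$, for various $n$,'' and your pairing construction is precisely that combination, with the added care of verifying that the result is a computable set of pairs in the strong sense and that every type is hit. There is nothing to correct.
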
 
	
	Now, we can prove the result of Marker and Miller, using our Theorem \ref{simple.general}.
	
	\begin{prop}
		\label{diff}
		
		Every countable model of $DCF_0$ admits strong jump inversion.
		
	\end{prop}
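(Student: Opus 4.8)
The plan is to verify the three hypotheses of Theorem~\ref{simple.general} for an arbitrary countable model $\mathcal{A}$ of $DCF_0$ and then apply that theorem. Since $DCF_0$ has effective quantifier elimination, every $B_1$-formula is, effectively, equivalent to a quantifier-free one, so the $B_1$-type of a tuple is determined by, and effectively interchangeable with, its quantifier-free type and its complete type. Hence Proposition~\ref{enumeration}, which gives a computable enumeration $R$ of all complete types realized in models of $DCF_0$, immediately supplies Condition~(1): a computable enumeration of a set of $B_1$-types including all those realized in $\mathcal{A}$.

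Next I would dispatch Condition~(3) in the strong form of Remark~\ref{SimpleCondition3}, showing that for \emph{any} copy $\mathcal{C}\cong\mathcal{A}$ the jump $D(\mathcal{C})'$ computes an $R$-labeling of $\mathcal{C}$. Given a tuple $\bar{a}$ in $\mathcal{C}$, its type is realized in a model of $DCF_0$, so it occurs among the enumerated $R_i$; let $i$ be least with $R_i$ equal to the type of $\bar{a}$. The assertion that $R_i$ agrees with the type of $\bar{a}$ says that for every formula $\varphi$ one has $\varphi\in R_i$ iff $\mathcal{C}\models\varphi(\bar{a})$. Since $R_i$ is computable uniformly in $i$ and, by quantifier elimination, each instance $\mathcal{C}\models\varphi(\bar{a})$ is decidable from $D(\mathcal{C})$, this matching condition is $\Pi^0_1$ relative to $D(\mathcal{C})$ and hence decidable by $D(\mathcal{C})'$. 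Searching for the least correct $i$ yields an $R$-labeling computable in $D(\mathcal{C})'$, uniformly in $\mathcal{C}$; this is the step the surrounding discussion flags as easy.

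I expect Condition~(2), effective type completion, to be the main obstacle, precisely because the route used in the earlier examples is closed off: among the countable models of $DCF_0$ only the saturated one is weakly $1$-saturated, so a general $\mathcal{A}$ need not realize every type generated by $p(\bar{u})$ and existential formulas. The correct replacement rests on two facts about the $\omega$-stable theory $DCF_0$. First, over the differential field $K$ generated by a realizing tuple, the theory $DCF_0\cup D(K)$ is complete (Lemma~\ref{added1}), and its \emph{isolated} types are realized in \emph{every} model of $DCF_0$ extending $K$, in particular in $\mathcal{A}$. Second, because $DCF_0$ is $\omega$-stable, isolated types are dense, so the consistent existential requirement $\varphi$ (reduced to quantifier-free form by quantifier elimination) is contained in some isolated type $q(\bar{u},x)$ over $K$. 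What must be added is the \emph{effectiveness}: given $p$ and $\varphi$ one must uniformly produce such a $q$. Here I would invoke the lemma of Marker and Miller~\cite{MM}, which does exactly this; its effect can also be obtained from the differential-polynomial machinery of Lemmas~\ref{lem3.7}--\ref{lem3.13}, by searching for an irreducible differential polynomial over $K$ of minimal rank whose associated type is consistent with $\varphi$ and then reading off $q$, with consistency checks carried out as in Lemma~\ref{added2}.

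With Conditions~(1)--(3) established, Theorem~\ref{simple.general} applies and yields that every countable model $\mathcal{A}$ of $DCF_0$ admits strong jump inversion, recovering the theorem of Marker and Miller.
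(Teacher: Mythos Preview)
Your proposal is correct and follows essentially the same route as the paper: verify the three hypotheses of Theorem~\ref{simple.general}, obtaining Condition~(1) from Proposition~\ref{enumeration}, Condition~(3) in the strong form of Remark~\ref{SimpleCondition3} by observing that the least correct $R$-index for a tuple is computable from $D(\mathcal{C})'$, and Condition~(2) by invoking the Marker--Miller lemma (their Lemma~4.3) in lieu of weak $1$-saturation. Your added commentary on why the Marker--Miller type lands in the differential closure (via $\omega$-stability and density of isolated types) is a helpful gloss, but the argument itself matches the paper's.
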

	
	\begin{proof}
		
		By Proposition \ref{enumeration}, there is a computable enumeration $R$ of the complete types realized in models of $DCF_0$, and thus, of the $B_1$ types.  Thus, Condition (1) of Theorem \ref{simple.general} holds.  The following lemma shows that Condition (3) holds in the strong way.  
		
		\begin{lem}     
			
			Let $X$ be a subset of $\omega$, and let $\mathcal{A}$ be a model of $DCF_0$ that is low over $X$.  Then $X'$ computes an $R$-labeling of $\mathcal{A}$.  
			
		\end{lem}
		
		\begin{proof} [Proof of Lemma]
			
			Note that for each tuple $\bar{a}$, we have an $\mathcal{A}$-computable procedure for finding, at step $s$, the first index $i$ such that $R_i$ agrees with the type of $\bar{a}$ on the first $s$ quantifier-free formulas.  After some step $s$, this $i$ is the first index for the $B_1$-type of $\bar{a}$.  Thus, we have an $R$-labeling that is computable in $D(\mathcal{A})'$, and hence, in $X'$, since $\mathcal{A}$ is low over $X$.
		\end{proof}    
		
		We need to establish Condition (2), effective type completion.  There is a uniform effective procedure for computing, from a type $p(\bar{u})$ and a formula $\varphi(\bar{u},x)$, consistent with $p(\bar{u})$, a type $q(\bar{u},x)$ such that if $\bar{c}$ satisfies $p(\bar{u})$, then some $a$ satisfies $q(\bar{c},x)$.  Marker and Miller \cite{MM} needed this for the same reason we do.  It is Lemma 4.3 in their paper.  (The type $q(\bar{c},x)$ will be realized in the differential closure of $\bar{c}$.)  The conditions for Theorem \ref{simple.general} are all satisfied.  Therefore, $\mathcal{A}$ admits strong jump inversion. 
	\end{proof}
	
	\subsubsection{Decidable saturated model of $DCF_0$}
	
	In general, a structure $\mathcal{A}$ is computable if its atomic diagram is computable, and $\mathcal{A}$ is decidable if the complete diagram is computable.  By elimination of quantifiers, a model of $DCF_0$ is decidable iff it is computable.  Using Proposition \ref{enumeration}, we can show that the countable saturated model of $DCF_0$ has a decidable copy.  We need the following result from Morley \cite{Morley}.
	
	\begin{thm}
		\label{decidable}
		
		Let $T$ be a countable complete elementary first order theory for a computable language.  Then the following are equivalent:
		
		\begin{enumerate}
			
			\item  $T$ has a decidable saturated model,
			
			\item  there is a computable enumeration of all types realized in models of~$T$.
			
		\end{enumerate}
		
	\end{thm}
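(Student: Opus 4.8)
The plan is to prove the two implications separately, with $(2)\Rightarrow(1)$ carrying essentially all of the work. For $(1)\Rightarrow(2)$, suppose $\mathcal{M}$ is a decidable saturated model of $T$, with universe $\{m_0,m_1,\dots\}$ and computable complete diagram. I would fix a computable listing of all finite tuples $\bar{m}_i$ from $\mathcal{M}$ and let $R_i$ be the complete type of $\bar{m}_i$ (identifying each formula with its G\"odel number). Since the complete diagram is computable, the resulting set $R$ of pairs $(i,\varphi)$ with $\mathcal{M}\models\varphi(\bar{m}_i)$ is computable, so $R$ is a computable enumeration in the sense defined above. Because $\mathcal{M}$ is saturated, every complete type in finitely many variables consistent with $T$ --- equivalently, every type realized in some model of $T$ --- is realized by a tuple of $\mathcal{M}$, and so occurs as some $R_i$. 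Thus $R$ enumerates exactly the types realized in models of $T$.

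For $(2)\Rightarrow(1)$, the decisive first observation is that the enumeration makes $T$ itself computable. The enumeration lists, in particular, the complete type of the empty tuple, which by completeness is $T$; reading off the arity of each $R_i$, we recover $T$ as one of the computable sets $R_i$. Consequently, consistency of a finite set of formulas $\Phi(\bar{u})$ with $T$ (with $\bar{u}$ free) is decidable: $\Phi$ is consistent with $T$ iff $(\exists\bar{u})\bigwedge\Phi\in T$. This decidable consistency test is exactly what drives an effective Henkin construction.

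I would then build the complete diagram of a model $\mathcal{M}\models T$ on Henkin constants $c_0,c_1,\dots$, deciding formulas in order of G\"odel number while always preserving consistency with $T$, and adjoining a fresh witness $c_m$ with $\theta(c_m)$ whenever an existential $(\exists x)\theta$ is placed in the diagram. Every step is effective, so the complete diagram is computable and $\mathcal{M}$ is decidable. To force saturation I would interleave requirements $N_{\bar{c},j}$, indexed by finite tuples $\bar{c}$ of constants and by indices $j$ for which $R_j$ is a type in $|\bar{c}|+1$ variables: whenever $N_{\bar{c},j}$ is serviced and the formulas of $R_j$, read with $\bar{c}$ substituted for the first block of variables and a fresh $d$ for the last, are consistent with the current finite diagram, I commit $d$ to realize $R_j(\bar{c},x)$ and feed the remaining formulas of $R_j(\bar{c},d)$ into the diagram on later visits.

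Correctness then comes from a single bookkeeping argument. In the finished model each finite tuple $\bar{c}$ has a complete type $P=tp(\bar{c})$, which is realized in a model of $T$ and hence equals some $R_k$; for any $j$ with $R_j\restriction\bar{u}=P$, at the stage servicing $N_{\bar{c},j}$ the finitely many formulas already decided about $\bar{c}$ lie in $P=R_j\restriction\bar{u}$, so $R_j(\bar{c},d)$ is consistent with the current diagram and the requirement acts. Thus every completion of $tp(\bar{c})$ over one further variable is realized, which is exactly countable saturation. The main obstacle, and the step I would treat most carefully, is precisely this coordination: one cannot decide in advance whether an enumerated $(|\bar{c}|+1)$-type $R_j$ restricts to the eventual type of $\bar{c}$ --- that equality is only $\Pi^0_1$ --- so instead of testing it I rely on the decidable consistency check to commit witnesses opportunistically, and then argue after the fact both that no inconsistency is ever introduced and that every requirement with $R_j\restriction\bar{u}=tp(\bar{c})$ is satisfied.
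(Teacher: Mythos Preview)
The paper does not prove this theorem; it is quoted from Morley \cite{Morley} and used only as a black box to obtain the corollary on $DCF_0$. So there is nothing in the paper to compare your argument against, and I can only assess your sketch on its own.

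Your $(1)\Rightarrow(2)$ is correct and is the standard argument. For $(2)\Rightarrow(1)$ you have correctly observed that $T$ itself is decidable (as the enumerated $0$-type) and hence that consistency of a finite set of formulas with $T$ is decidable, and you have correctly located the one genuinely delicate point. But your proposed resolution does not close the gap. Once you commit a fresh $d$ to the full type $R_j(\bar c,d)$, your later Henkin decisions and your later commitments are only required to preserve consistency with the \emph{current finite} diagram. Nothing prevents a subsequent Henkin step, or a second commitment $R_{j'}(\bar c',d')$ whose restriction to the shared constants disagrees with $R_j$, from being consistent with the finite fragment of $R_j(\bar c,d)$ already fed in while being inconsistent with the full type $R_j(\bar c,d)$. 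When you next try to feed a formula of $R_j(\bar c,d)$ into the diagram you may then hit a contradiction. So ``no inconsistency is ever introduced'' does not follow from your setup; and if you instead abandon a commitment when this happens, your correctness paragraph only shows that the right requirements \emph{begin} to act, not that they \emph{succeed}.

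The standard repair is to maintain at every stage a single index $i_s$ for a complete type of the \emph{entire} tuple of constants used so far, and to process each requirement by searching the enumeration for an index of a one-variable extension of $R_{i_s}$ with the desired property. Since ``$R_k$ extends $R_{i_s}$'' is only $\Pi^0_1$, one dovetails this search with the $\Sigma^0_1$ search for a witness that the requirement is vacuous, and argues that one of the two must halt. That is essentially Morley's argument; it lives exactly where you said the work would be, but your sketch does not yet carry it out.
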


	Using Theorem \ref{decidable} and Proposition \ref{enumeration}, we get the following.        
	
	\begin{cor}
		
		The saturated model of $DCF_0$ has a decidable copy.  
		
	\end{cor}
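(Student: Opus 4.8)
The plan is to apply Morley's theorem (Theorem~\ref{decidable}) essentially verbatim, since the substantive work has already been packaged into Proposition~\ref{enumeration}. First I would check that $DCF_0$ meets the hypotheses of Theorem~\ref{decidable}: the language of differential fields is computable, and by the work of A.\ Robinson and Blum discussed above, $DCF_0$ is a complete, decidable, elementary first order theory. Thus all the standing assumptions of Theorem~\ref{decidable} are in place. Next I would invoke Proposition~\ref{enumeration}, which furnishes a computable enumeration $R$ of all complete types realized in models of $DCF_0$. This is exactly condition~(2) in the statement of Morley's theorem.

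By the equivalence recorded in Theorem~\ref{decidable}, condition~(2) yields condition~(1): $DCF_0$ has a decidable saturated model, say $\mathcal{M}$. It then remains to upgrade ``there exists a decidable saturated model'' to ``the saturated model of $DCF_0$ has a decidable copy.'' For this I would appeal to Blum's result, mentioned earlier, that $DCF_0$ is $\omega$-stable. Since $\omega$-stable theories have a countable saturated model, and the countable saturated model of a complete theory is unique up to isomorphism, the phrase ``the saturated model'' is well defined, and $\mathcal{M}$ is necessarily isomorphic to it. Hence the countable saturated model of $DCF_0$ has a decidable copy, namely $\mathcal{M}$.

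The real difficulty—constructing a computable enumeration of the types—has already been surmounted in Proposition~\ref{enumeration}; everything that remains here is bookkeeping. The one point that deserves a sentence of care is the existence and uniqueness of the countable saturated model, which is what lets us transfer the existence of \emph{some} decidable saturated model to \emph{the} saturated model; this is precisely where $\omega$-stability is used. Finally, since $DCF_0$ admits effective elimination of quantifiers, a model is decidable iff it is computable, so the copy produced is in fact computable, though ``decidable'' is the natural way to phrase the conclusion.
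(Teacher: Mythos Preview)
Your proposal is correct and follows exactly the paper's approach: the paper's proof consists of a single sentence invoking Theorem~\ref{decidable} and Proposition~\ref{enumeration}, and you have done precisely this, merely adding (reasonable) verification that the hypotheses of Morley's theorem are met and that ``the'' saturated model is well defined via $\omega$-stability. There is nothing to add.
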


\end{document}